\newtheorem{theorem}{Theorem}[section]
\newtheorem{definition}[theorem]{Definition}
\newtheorem{lemma}[theorem]{Lemma}
\newtheorem{proposition}[theorem]{Proposition}
\newtheorem{remark}[theorem]{Remark}
\numberwithin{equation}{section}
\renewcommand{\d}{{\rm d}}
\newcommand {\CI}{{\rm CI}}
\newcommand {\CR}{{\rm CR}}
\newcommand {\CU}{{\rm CU}}
\renewcommand {\Re}{{\rm Re \,}}
\newcommand{\R}{\mathbb{R}}
\newcommand{\N}{\mathbb{N}}
\definecolor{mycolor}{RGB}{0, 204, 204}
\newtcolorbox{mybox}[1]{colback=red!5!white,colframe=red!75!black,fonttitle=\bfseries,title=#1}
\begin{document}
	
	\title{\textbf{Data-Driven Mathematical Modeling Approaches for COVID-19: a survey}}
\author{\textsc{J. Demongeot$^{(a)}$, and   P. Magal$^{(b),}$\thanks{Corresponding author. e-mail: \href{mailto:pierre.magal@u-bordeaux.fr}{pierre.magal@u-bordeaux.fr}} }\\
	{\small \textit{$^{(a)}$Université Grenoble Alpes, AGEIS EA7407, F-38700 La Tronche, France}} \\
	{\small \textit{$^{(b)}$Univ. Bordeaux, IMB, UMR 5251, F-33400 Talence, France.}} \\
	{\small \textit{CNRS, IMB, UMR 5251, F-33400 Talence, France.}}
}
\maketitle
\begin{abstract}
In this review, we successively present the methods for phenomenological modeling of the evolution of reported and unreported cases of COVID-19, both in the exponential phase of growth and then in a complete epidemic wave. After the case of an isolated wave, we present the modeling of several successive waves separated by endemic stationary periods. Then, we treat the case of multi-compartmental models without or with age structure. Eventually, we review the literature, based on 230 articles selected in 11 sections, ranging from the medical survey of hospital cases to forecasting the dynamics of new cases in the general population.
\end{abstract}

\noindent \textbf{Keywords:} \textit{COVID-19 epidemic wave prediction; Epidemic models; Time series; Phenomenological models; Social changes; Time dependent models;  Contagious disease; Endemic phase; Epidemic wave; Endemic/epidemic;  Reported and unreported cases; Parameters identification; }

	\vspace{0.5cm}
	\noindent \textit{I simply wish that, in a matter which so closely concerns the well-being of mankind, no decision shall be made without all the knowledge which a little analysis and calculation can provide}, Daniel Bernoulli 1765.
	
	\vspace{0.5cm}

	\tableofcontents

	\section{Introduction} 
	\label{Section1}
	The COVID-19 outbreak has been the catalyst for increased scientific activity, particularly in data collection and modeling the dynamics of new cases and deaths due to the outbreak.
	
	\medskip 
	Such scientific excitement contemporary with a pandemic is not new. Several historical epidemic episodes have led to significant advances in public health, biostatistics, databases, and discrete or continuous mathematical modeling of disease evolution, considering the mechanisms of contagion, host resistance, and mutation of the infectious agent. Historically, we can thus distinguish several epidemic outbreaks followed by important scientific breakdowns:
	\begin{itemize}
		\item [A)]	 The plague epidemic of 1348 saw the development of the beginnings of epidemiology with the recording of cases at the abbey of St Antoine (Isère in France) and in the network of hospitals managed by the Antonin monks;
		
		\medskip 
		\item[B)]	During the London cholera epidemic of 1654, John Snow discovered the waterborne transmission of cholera, which led to significant changes to improve public health, notably by constructing improved sanitation facilities. This epidemic and its resolution by Snow even before the discovery of the responsible germ was a founding event in intervention epidemiology, with the validation of methods that can be applied to all diseases, not just contagious (infectious or social), in particular the principle of coupling the mapping of patients with that of sources of water for domestic consumption, which would later lead to the development of Geographic Information Systems (GIS) in epidemiology and to work such as the collection of water used as a COVID-19 tracer in the French Obépine project (\url{https://www.reseau-obepine.fr/});
		
		\medskip 
		\item[C)] The smallpox epidemic of 1760 led to the importation into Europe of the inoculation practiced in Turkey (subsequently leading to vaccination by inert vaccine by Jenner) and to the creation of the first models for predicting epidemic waves by Bernoulli and d'Alembert.
	\end{itemize}
	
	\medskip 
	In the tradition of these past discoveries, we will therefore present some recent progress in modeling the dynamics of infectious diseases and their transmission mechanisms in this article.
	
	\medskip 
	The plan of the paper is the following. Section \ref{Section2} presents some background about the reported data. We explain some phenomena related to data collection, such as contact tracing, daily numbers of tests, and more.   In section \ref{Section3}, we explain the main idea behind the notion of a phenomenological model. In section \ref{Section4}, we introduce an epidemic model with unreported cases and explain how to compare such a model with the data. In section \ref{Section5}, we consider the exponential phase of an epidemic, where the phenomenological model will be an exponential function.   In section \ref{Section6}, we consider a single epidemic wave, where the phenomenological model will be the Boulli-Verhulst model. We consider several successive epidemic waves in section \ref{Section7}. In section \ref{Section8}, we present some new results to understand how to compare the data and the epidemic models with several compartments during the exponential phase.  In section \ref{Section9}, we consider a model with age groups and explain how to deal with data in large systems. Section \ref{Section10} is a survey section where we try to give some references for a selected number of important topics to model epidemic outbreaks.

	\section{Reported and unreported data}
	\label{Section2}
	\subsection{What are the unreported cases?}
	\label{Section2.1}
	The unreported cases correspond to mild symptoms because people will only get tested in case of severe symptoms. Unreported cases can result from a lack of tests or asymptotic patients \cite{Oran20}. That is infected patients that do not show symptoms. Unreported cases are partly due to a low daily number of tests.
	\subsection{Example of unreported cases}
	\label{Section2.2}

	A published study	traced COVID-19 infections resulting from a business meeting in Germany attended by a person who was infected but had no symptoms at the time \cite{rothe2020transmission}. Four people were eventually infected from this single contact.
	
	\begin{figure}[H]
		\centering
		\includegraphics[scale=.25]{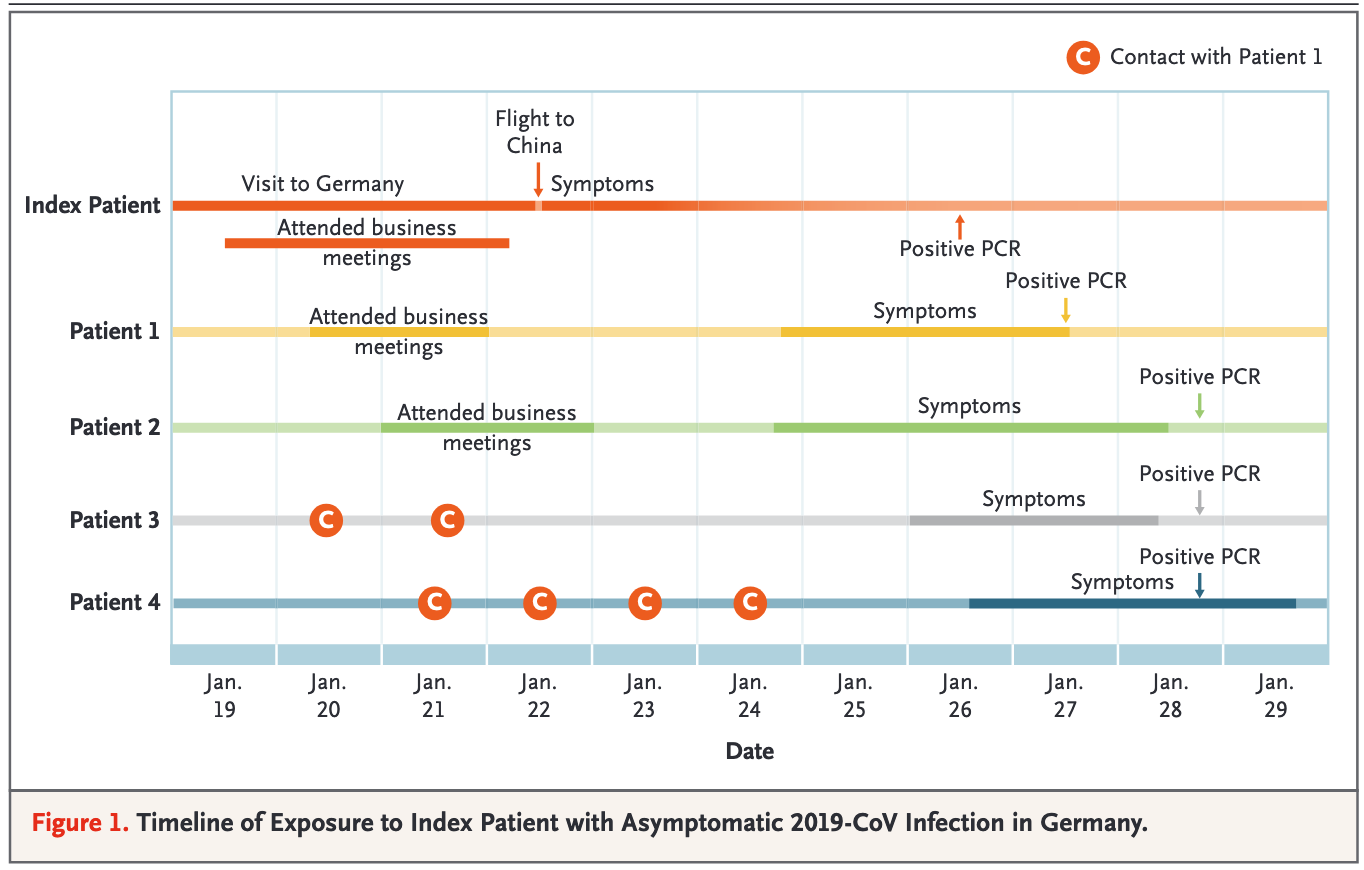}
		\caption{\textit{ Timeline of Exposure to Index Patient with Asymptomatic 2019-CoV Infection in Germany.}}
		\label{Fig1}
	\end{figure}

	\medskip A team in Japan \cite{nishiura2020serial} reports that 13 people evacuated from \textit{Diamond Princess} were infected, 4 of whom, or 31 $ \% $, never developed symptoms.
	
	\medskip 
	On the French \textit{aircraft carrier Charles de Gaulle}, clinical and biological data for all 1739 crew members were collected on arrival at the Toulon harbor and during quarantine: 1121 crew members (64\%) were tested positive for COVID-19 using RT-PCR, and among these, 24\% were asymptomatic \cite{Bylicki21}.

	\subsection{Testing data for New York state}
	\label{Section2.3}
	
	The goal of the figure below is to show that due to the changes in the method of detecting the cases, a jump occurred on February 12 in Wuhan in, China. The testing technology was not well developed at the early beginning of the epidemic, and such a problem also occurs in other countries.   
	\begin{figure}
		\centering
		\includegraphics[scale=0.15]{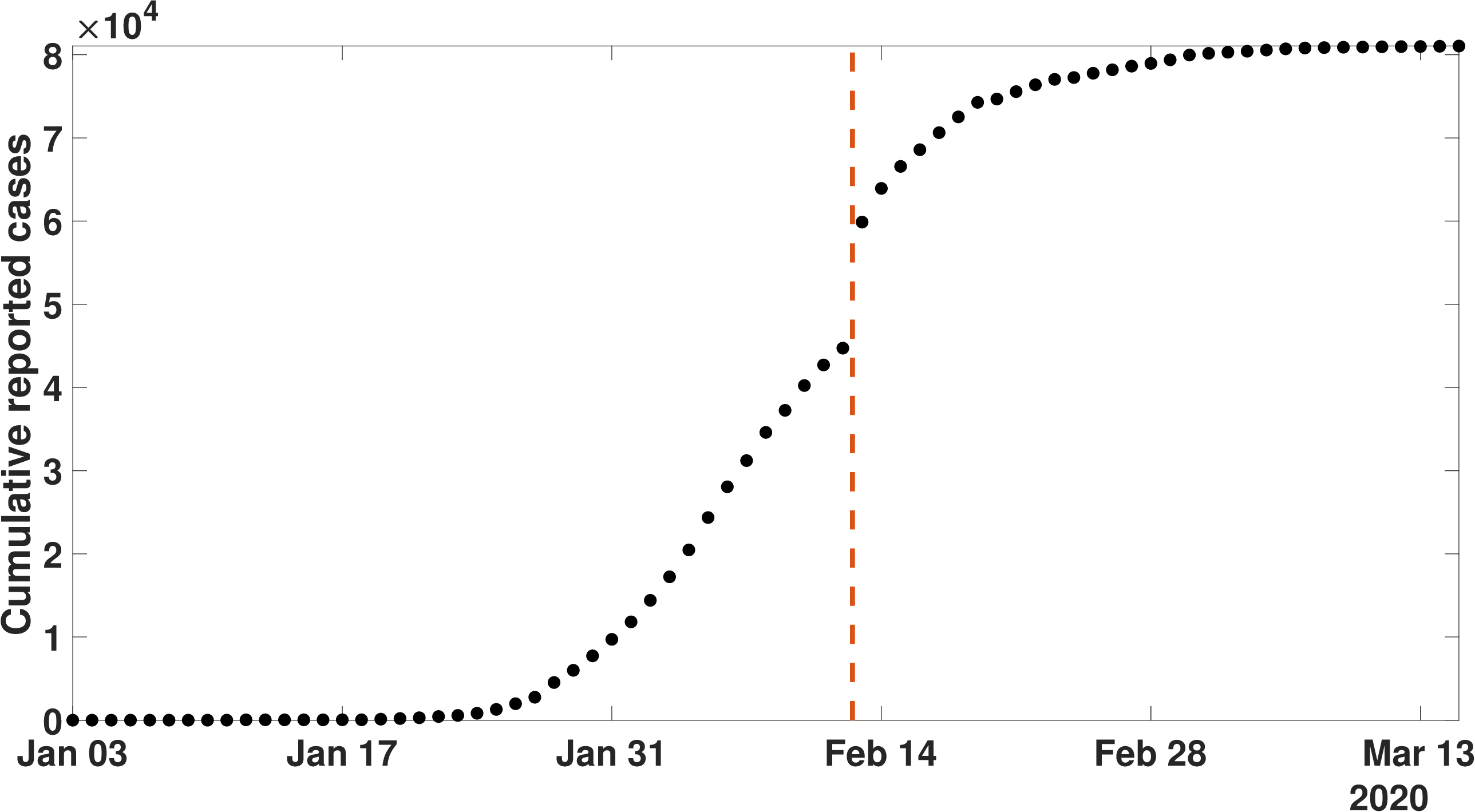} 
		\caption{\textit{Cumulative number of cases in Wuhan China. }}
		\label{Fig2}
	\end{figure}

	The dynamic of the daily number of tests is connected to the dynamic of the daily number of reported cases in a complex way \cite{griette2021clarifying}.  
	
	\medskip 	
	The large peak in the number of tests at the end of April 2020, shows that the number of cases was strongly underestimated during the period. Because increasing the number of tests increases the number of positive test. Later on, the epidemic wave passed and the changes in the number of test had almost no influence on the number of positive test.  
	
	\begin{figure}
		\centering
		\includegraphics[scale=0.2]{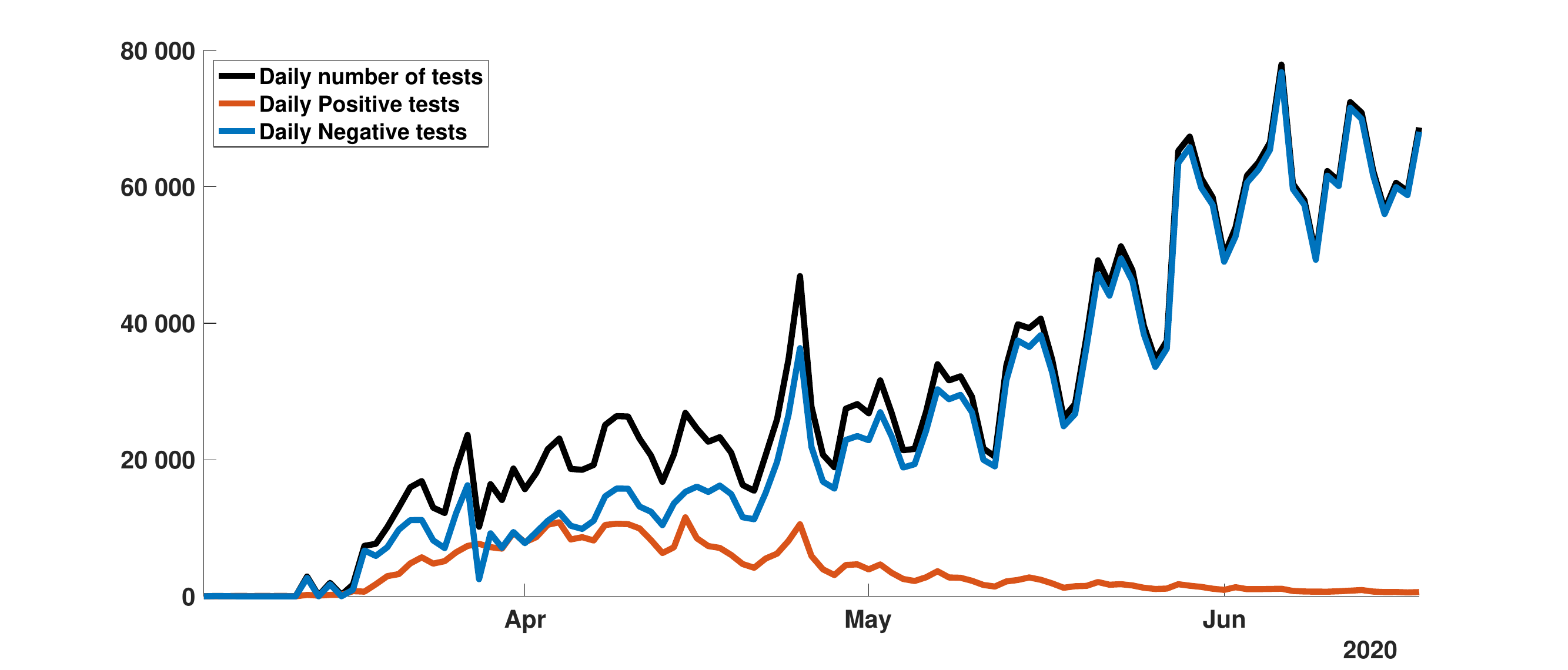}
		\caption{{\textit{ In this figure, we plot the daily number of tests for the New York State. The black curve, orange curve,
					and blue curve correspond respectively to the number of tests, the number of positive tests, and the number of
					negative tests. }}}\label{Fig3}
	\end{figure}

	The number of reported cases is the consequence of the combination of the dynamic of the number of tests (a complex dynamic which depends on human perceptions of the epidemic outbreak), and the dynamic of the epidemic outbreak (which is also very complex due the contact rate which depends on human perceptions)   and the dynamic of transmission (which can also be complex due to the changes of susceptibility in the population). 
	
	\medskip 
	Figure \ref{Fig4} presents the flowchart of the model used in \cite{griette2021clarifying}. In Figure \ref{Fig5} (which was obtained in \cite{griette2021clarifying}), we use the daily number of tests as an input of the model, and we fit the output of the model to the cumulative number of cases. 
	\begin{figure}
		\begin{center}
			\includegraphics[scale=0.6]{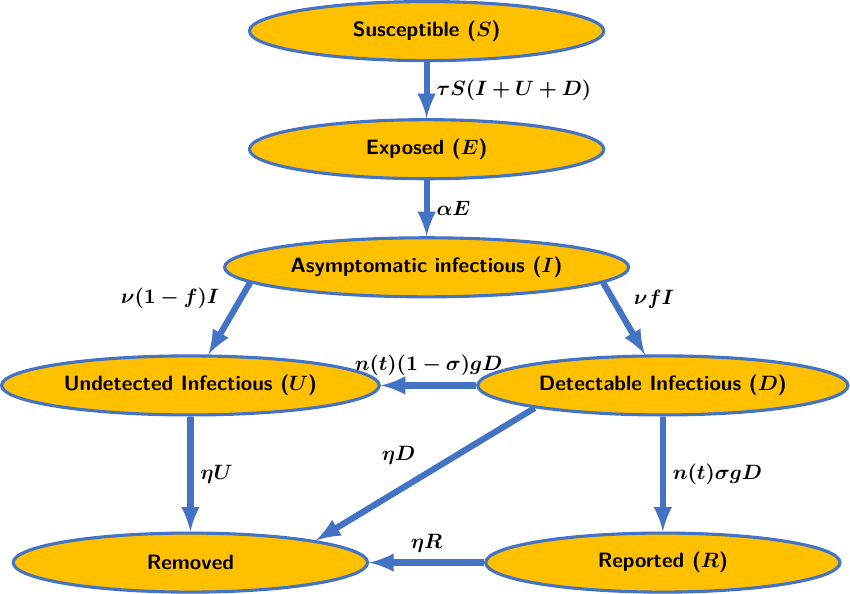}
		\end{center}
		\caption{\textit{Flow chart of the epidemic model. In this diagram $n(t)$  is the daily number of tests at time $ t$ is an input of the model. We consider a fraction $(1-\sigma)$  of false negative tests  and a fraction $\sigma$ of true positive tests.  The parameter $g$ reflects the fact that the tests are devoted not only to the symptomatic patients but also to a large fraction of the population of New York state. }}
		\label{Fig4}
	\end{figure}

	\begin{figure}
		\centering
		\hspace{1cm}	\textbf{Daily}\hspace{4cm}\textbf{Cumulative} \vspace{0cm}\\
		\includegraphics[scale=0.35]{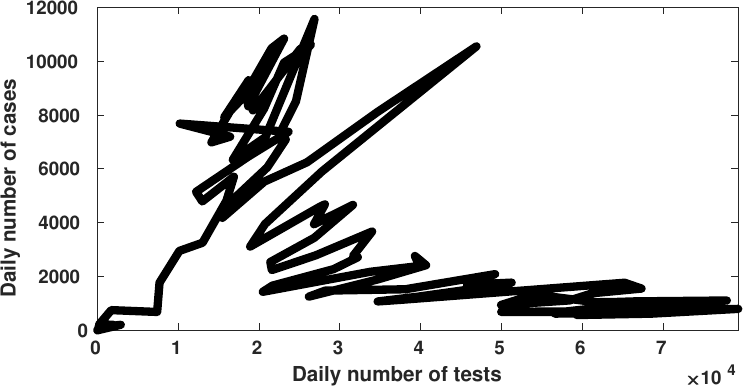} \hspace{0.4cm}    \includegraphics[scale=0.35]{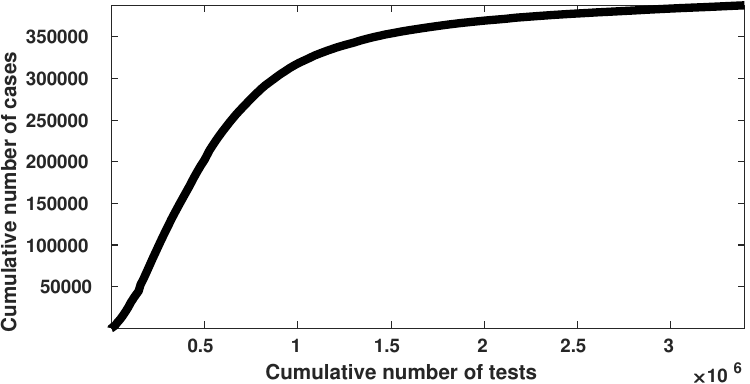} \\
		\includegraphics[scale=0.35]{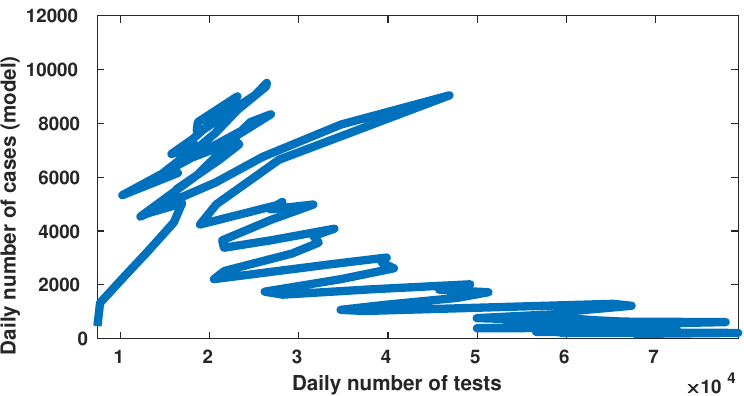} \hspace{0.4cm}    \includegraphics[scale=0.35]{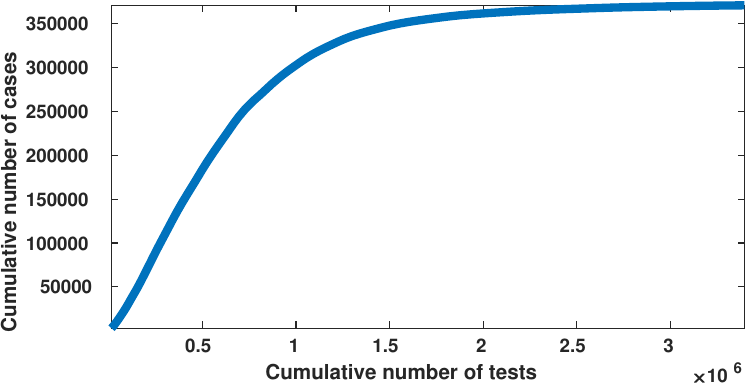} 
		\caption{{\textit{ The black curves are produced by using the New York state data only. The blue curves are constructed by using the model with the testing data as input of the model.}}}\label{Fig5}
	\end{figure}

	In Figure \ref{Fig5}, on the left-hand side, we consider the daily fluctuations of the number of reported cases (epidemic dynamic) and the daily number of tests (testing dynamics). Combining test dynamics and infection dynamics results in a complex time-parameterized curve. Nevertheless, we obtain a good correspondence between the top and the bottom left figures. The correspondence becomes excellent on the figures on the right, where we consider the cumulative number of declared cases and the cumulative number of tests.

	\section{Phenomenological models}
	\label{Section3}
	Along this note, we use phenomenological models to fit the data. 	
	
	\begin{definition}
		A phenomenological model is a mathematical model used to describe the data without mechanistic description of the processes involved in the phenomenon. 
	\end{definition} 
	
	In the next section, we will use exponential functions to get a continuous time representation of the data. This will be our first example of a phenomenological model. Our goal here is to replace the data by a function that captures the robust tendency of the phenomenon.  In some sense, we are trying to get rid of the noise around the tendency. 
	
	\medskip 
	By using, for example, spline functions, we can always fit the data perfectly. Then the fit is too precise to capture the significant information, and if we compute the derivatives of such a perfect fit,  we will obtain a very noisy signal that is not meaningful.

	\medskip 
	Therefore the underlying idea of the phenomenological model is to derive a robust tendency with a limited number of parameters that will represent the data. 
	Such a model is supposed to reduce the signal's noisy part and capture the robust part of the signal. 
	
	\medskip 
	The phenomenological model can then replace the data, permitting analysis of some consequences when injected into the models. For example, we will obtain a meaningful range of parameters. 
	
	\begin{figure}[H]
		\begin{center}
			\includegraphics[scale=0.5]{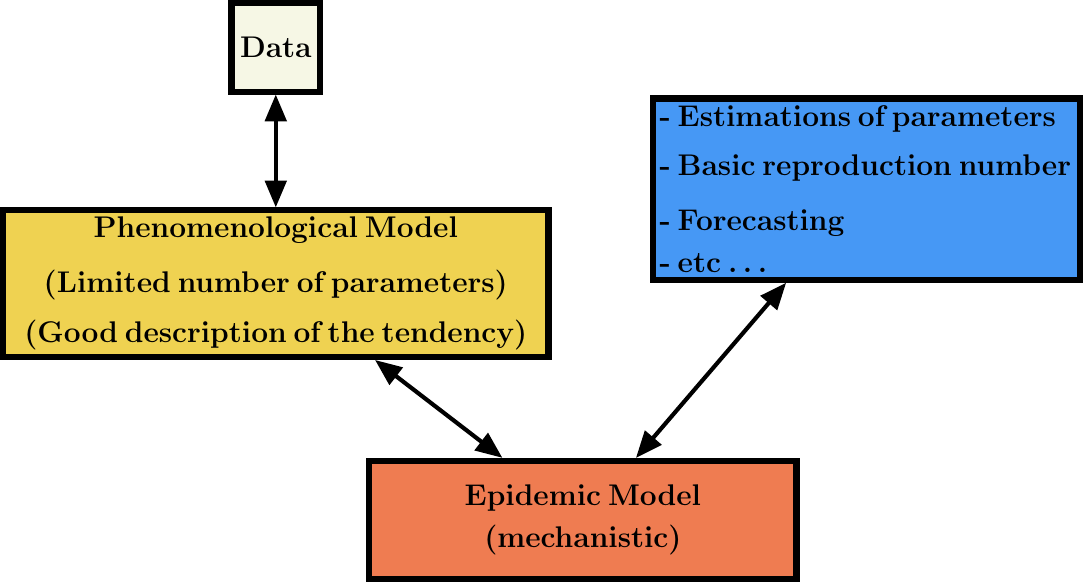} 
			\caption{{\textit{We can apply statistical methods to estimate the parameters of the proposed
						phenomenological model and derive their average values with some confidence intervals.
						The phenomenological model is used at the first step of the modelling process, providing
						regularized data to the epidemic model and allowing the identification of its parameters.}}}\label{Fig6}
		\end{center}
		
	\end{figure}

	\section{Epidemic model with reported and unreported individuals}
	\label{Section4}

	\subsection{Mathematical model}
	\label{Section4.1}
	Transmissions between infectious and susceptible individuals are described by
	\begin{equation}\label{4.1}
		\tcbhighmath[boxrule=2pt,drop fuzzy shadow=blue]{\begin{cases}
				S'(t)=-\tau(t)\, S(t)\, I(t),\\
				I'(t)=\tau(t)\, S(t)\,I(t)- \nu \, I(t),	
		\end{cases}}
	\end{equation}
	where $S(t)$ is the number of susceptible and $I(t)$ the number of infectious at time $t$. 
	
	\medskip 
	The system \eqref{4.1} is complemented with the initial data
	\begin{equation}\label{4.2}
		\tcbhighmath[boxrule=2pt,drop fuzzy shadow=blue]{	S(t_0)=S_0 \geq 0,\text{ and } I(t_0)=I_0 \geq 0,}
	\end{equation}
	where $t_0$ is a time from which the epidemic model \eqref{4.1} becomes applicable.

	\medskip 
	In this model, the rate of transmission $\tau(t)$ combines the number of contacts per unit of time and the probability of transmission (see Section \ref{Section6.1} for more information).

	\medskip 
	The number $1/\nu$ is the average duration of the asymptomatic infectious period,  $\tau(t)\, S(t)\,I(t)$ is the flow of $S$-individuals becoming $I$-infected at time $t$.  That is, 
	$$
	\int_{t_1}^{t_2} \tau(\sigma)\, S(\sigma)\,I(\sigma) \d \sigma
	$$
	is the number of individual that became $I$ during the time interval $[t_1,t_2]$.  
	
	\medskip 
	Similarly, $\nu\, I(t)$ is the flow of $I$-individuals leaving  the $I$-compartment. That is 
	$$
	\int_{t_1}^{t_2} \nu\, I(\sigma)\d \sigma
	$$
	is the number of individual that became $I$ during the time interval $[t_1,t_2]$.  
	
	\medskip 
	The epidemic model associated with the flowchart in Figure \ref{Fig7} applies to the Hong Kong flu outbreak in New York City \cite{Magal18b, Ducrot20}. 
	\begin{figure}[H]
		\begin{center}
			
			\includegraphics[scale=0.75]{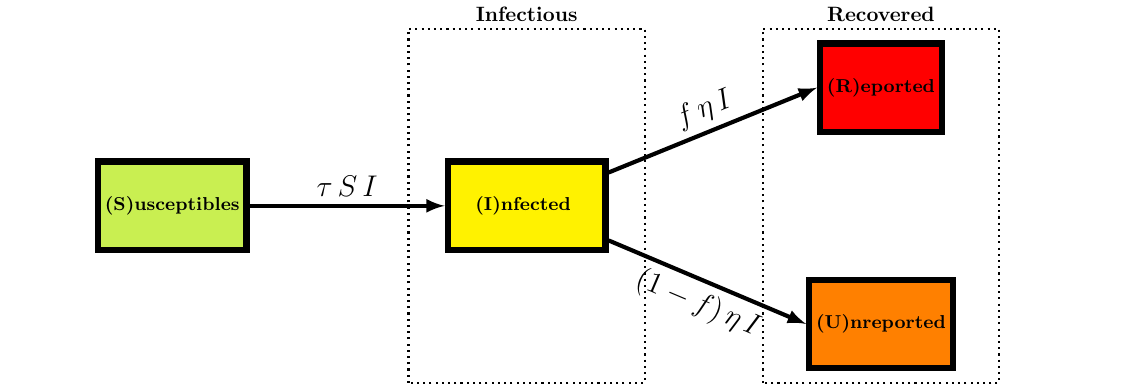}
		\end{center}
		\caption{ \textit{Flowchart.}}\label{Fig7}
	\end{figure}
	We assume that the flow of reported individuals is a fraction $0 \leq f \leq 1$ of the flow of recovered individuals $\nu\, I$. That is, 
	\begin{equation}\label{4.3}
		\tcbhighmath[boxrule=2pt,drop fuzzy shadow=blue]{	\CR'(t)= f \,    \nu \, I(t),   \text{ for }  t \geq t_0,}
	\end{equation}
	where $\CR(t)$ is the cumulative number of reported individuals, and $f$ is the  fraction of reported individuals.  The fraction $f$ is the fraction of patients with severe symptoms, and   $1-f$ the fraction of patients with mild symptoms.

	\subsection{Given Parameters}
	\label{Section4.2}
	In this study,	the following parameters will be given:

	\begin{itemize} 
		
		\item  Number of susceptible individuals when the epidemic starts
		\begin{equation*}
			\tcbhighmath[boxrule=2pt,drop fuzzy shadow=blue]{	S_0=67 \text{ millions for France.}}
		\end{equation*}

		\item  	Time from which the epidemic model starts to be valid, also called initial time of the model 	
		$\tcbhighmath[boxrule=2pt,drop fuzzy shadow=blue]{t_0.}$
		
		\begin{remark}The time $t_0$ is a time where the epidemic phase started already. 
		\end{remark}
		
		\item  The average duration of the infectiousness 			
		$\tcbhighmath[boxrule=2pt,drop fuzzy shadow=blue]{\dfrac{4.1}{\nu}=3 \text{ days}.}$
		\item The fraction of reported individuals
		$\tcbhighmath[boxrule=2pt,drop fuzzy shadow=blue]{f=0.9.}$
		
	\end{itemize}

	\subsection{Computed parameters}
	\label{Section4.3}
	The following parameters will be obtained by comparing the output of the model and the data: 
	\begin{itemize} 
		
		\item  $\tcbhighmath[boxrule=2pt,drop fuzzy shadow=blue]{I_0}$ the number of asymptomatic infectious patients at the start of the epidemic.

		\item  $\tcbhighmath[boxrule=2pt,drop fuzzy shadow=blue]{\tau(t)}$ the rate of transmission.

	\end{itemize}

	\section{Modeling the exponential phase}
	\label{Section5}
	At the early stage of the epidemic, we can assume that $S(t)$ is constant, and equal to $S_0$. We can also assume that $\tau(t)$ remains constant equal to $\tau_0=\tau(t_0)$. Therefore, by replacing these parameters into the I-equation of system \eqref{4.1} we obtain 
	$$
	I'(t)=(\tau_0 S_0 -\nu)I(t).
	$$ 
	Therefore 
	\begin{equation}\label{5.1}
		\tcbhighmath[remember as=fx, boxrule=2pt,drop fuzzy shadow=blue]{	I(t)= I_0 e^{\chi_2 (t-t_0)},}
	\end{equation}
	where 
	\begin{equation} \label{5.2}
		\tcbhighmath[remember as=fx, boxrule=2pt,drop fuzzy shadow=blue]{	\chi_2=\tau_0 S_0 -\nu.}
	\end{equation}

	\subsection{Initial number of infected and transmission rate}
	\label{Section5.1}
	By using \eqref{4.3} and  \eqref{5.1}, we obtain 
	\begin{equation} 
		\label{5.3}
		\tcbhighmath[remember as=fx, boxrule=2pt,drop fuzzy shadow=blue]{		\CR(t)= \chi_1 \left(e^{\chi_2  \left(t-t_0\right)}-1 \right)+\chi_3. }
	\end{equation} 
	We observe that 
	$$
	\CR(t_0)= \chi_3, 
	$$
	then $\chi_3$ is a parameter which must be estimated from the data. 
	
	\medskip 
	By using \eqref{4.3} at $t_0$, we obtain 
	\begin{equation}  
		\label{5.4}
		\tcbhighmath[remember as=fx, boxrule=2pt,drop fuzzy shadow=blue]{		I_0= \dfrac{\CR'(t_0)}{\nu \, f }=\dfrac{\chi_1 \, \chi_2}{\nu \, f },}
	\end{equation}
	and by using \eqref{5.2} 
	\begin{equation*}  
		\tcbhighmath[remember as=fx, boxrule=2pt,drop fuzzy shadow=blue]{		\tau_0=\dfrac{\chi_2+\nu}{S_0}.}
	\end{equation*}
	Note that the above estimations of $I_0$ and $\tau_0$ are robust since we used the data over a period (i.e., not only at $t_0$) to evaluate $\chi_1, \chi_2$.

	\subsection{Application to COVID-19 in mainland China} 
	\label{Section5.2}
	The figures below are taken from \cite{Demongeot20b} (see \cite{Liu20a} for similar results).

	\begin{figure}[H]
		\centering
		\includegraphics[scale=0.2]{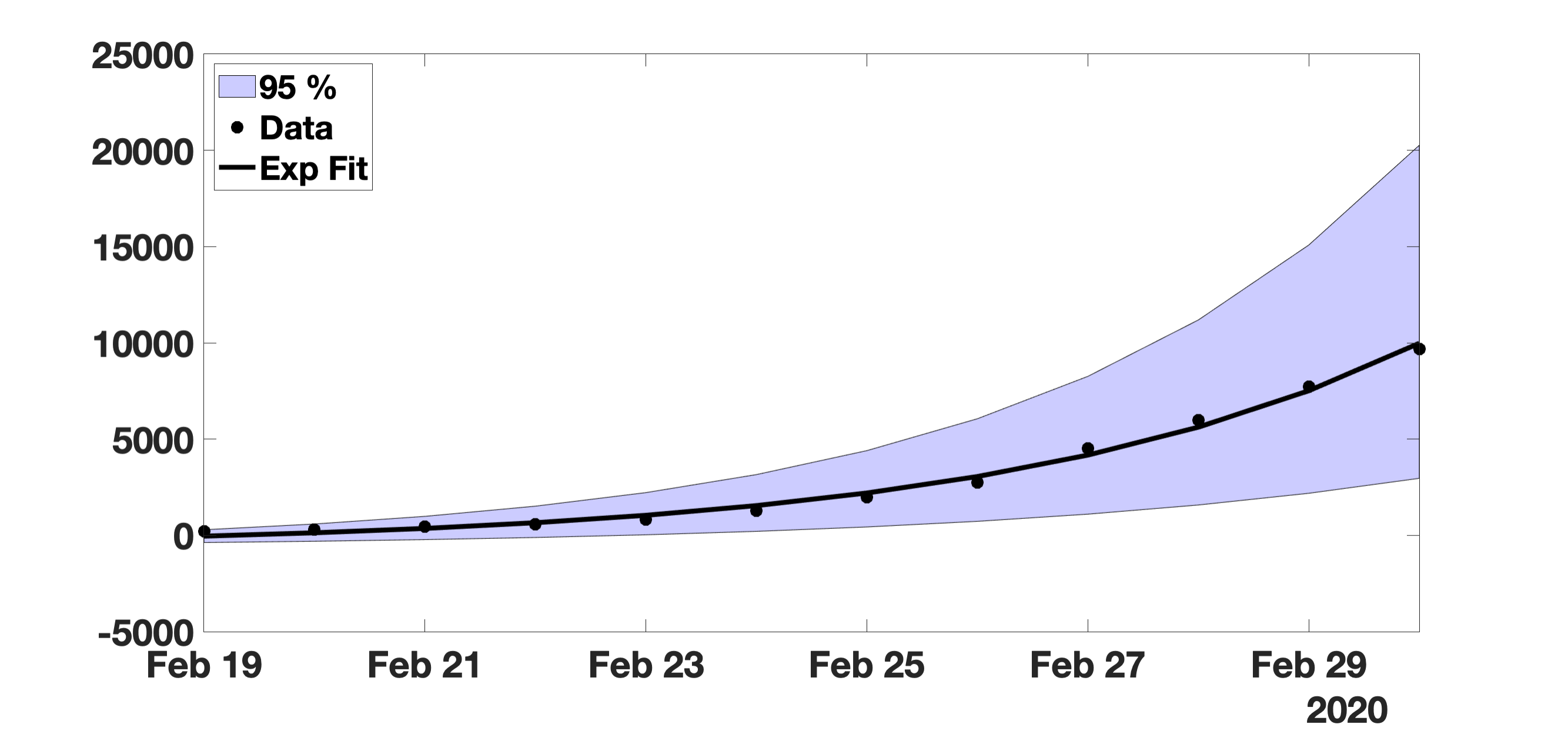}
		\caption{{\textit{In this figure, we plot the best fit of the exponential model to the cumulative number of reported cases of COVID-19 in mainland China between February 19 and March 1. We obtain $\chi_1=3.7366$, $\chi_2=0.2650$  and $\chi_3=615.41$ with $t_0=19$ Feb.  The parameter  $\chi_3$ is obtained by minimizing the error  between the best exponential fit and the data.  }}}\label{Fig8}
	\end{figure}
	
	\begin{remark}	  \label{REM5.1} Fixing $f=0.5$ and $\nu=0.2$, we obtain 
		$$
		I_0=3.7366 \times  0.2650  \times  \exp(0.2650\times 19)/(0.2 \times 0.5)= 1521, 
		$$
		and 
		$$
		\tau_0=\dfrac{0.2650+0.2}{1.4 \times 10^9}  =3.3214 \times 10^{-10}.
		$$
	\end{remark} 
	One may compare Figure \ref{Fig2} with Figure \ref{Fig9} and realize that there is no more jump in  Figure \ref{Fig9}. Here, we canceled out the jump in Figure \ref{Fig2} due to a change of method in counting the number of cases. More precisely, on February $16$, 2020, the cumulative data in Figure \ref{Fig2} jumps by $17409$ cases (the original data are available in \cite[Table 2]{Liu20c}). From that day, public health authorities in China decided to include the patients showing symptoms.
	\begin{figure}[H]
		\centering
		\includegraphics[scale=0.5]{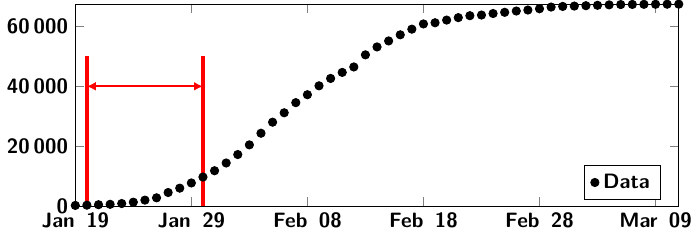}
		\caption{{\textit{In this figure, the black dots represent the cumulative number of cases for China (with correction for the jump presented in Figure \ref{Fig2}). The period marked in red corresponds to the period considered in Figure \ref{Fig8}. }}}\label{Fig9}
	\end{figure}
	\begin{remark}  \label{REM5.2}It is important to understand that, throughout this article,  we fit the cumulative reported data by using a phenomenological. The reason is simple: the cumulative data are much smoother, while the daily number of reported cases are much more fluctuating. Therefore, it is "in theory" much easier to fit the cumulative data with a phenomenological model. Unfortunately, the problem is not that simple. So for example, in the exponential phase, we obtain the parameters  
		$$
		\CR(t)= \chi_1 \left(e^{\chi_2  \left(t-t_0\right)}-1 \right)+\chi_3
		$$
		by using a best fit to the  cumulative number of cases. 
		
		Next, when we compute the first derivative of the above model to the daily number of cases, this gives a pretty reasonable approximation of the daily number of reported cases. 
		
		Another way to avoid the first derivative $t \to \CR(t)$,  is to use the following model
		$$
		D'(t)=f \nu I(t) -D(t).
		$$
		In this model, we use the same input flow of infected as for the model used to compute the cumulative number of cases. But here, we assume that daily cases individuals only stay one day in the $D$ compartment. This model is also equivalent to 
		$$
		D(t)= e^{-(t-t_0)}D_0+ \int_{t_0}^t e^{-(t-\sigma)} f \nu I(\sigma) d \sigma,
		$$
		and by replacing $ f \nu I(\sigma) $ by the cumulative data $\CR(\sigma)$, we obtain a formula for the daily number of cases. 
		
		So, during the exponential phase, once we obtain the best fit of the model to the cumulative data, the daily number of cases is given by 
		$$
		D(t)= e^{-(t-t_0)}D_0+ \int_{t_0}^t e^{-(t-\sigma)}  \chi_1 \left(e^{\chi_2  \left(\sigma-t_0\right)}-1 \right)+\chi_3 d \sigma. 
		$$	
		The model's advantage is that it avoids computing a derive of the cumulative number of cases, which can be an issue. 
	\end{remark}

	\subsection{Spectral method in epidemic time series}
	\label{Section5.3}
	During the COVID-19 pandemic, most people viewed the oscillations around the exponential growth at the beginning of an epidemic wave as the
	default in reporting the data. The residual is probably partly due to the reporting data process (random noise). Nevertheless, a significant remaining part of such oscillations could
	be connected to the infection dynamic at the level of a single average patient. Eventually, the central question we try to address here is: Is there some hidden information in the
	signal around the exponential tendency for COVID-19 data? So we consider the early stage of an epidemic phase, and we try to exploit the oscillations around the tendency in order
	to reconstruct the infection dynamic at the level of a single average patient. We investigate this question in \cite{Demongeot22}. 
	
	The figures below are taken from \cite[see Figures 13 and 14]{Demongeot22}. 
	\begin{figure}[H]
		\centering
		\includegraphics[scale=0.2]{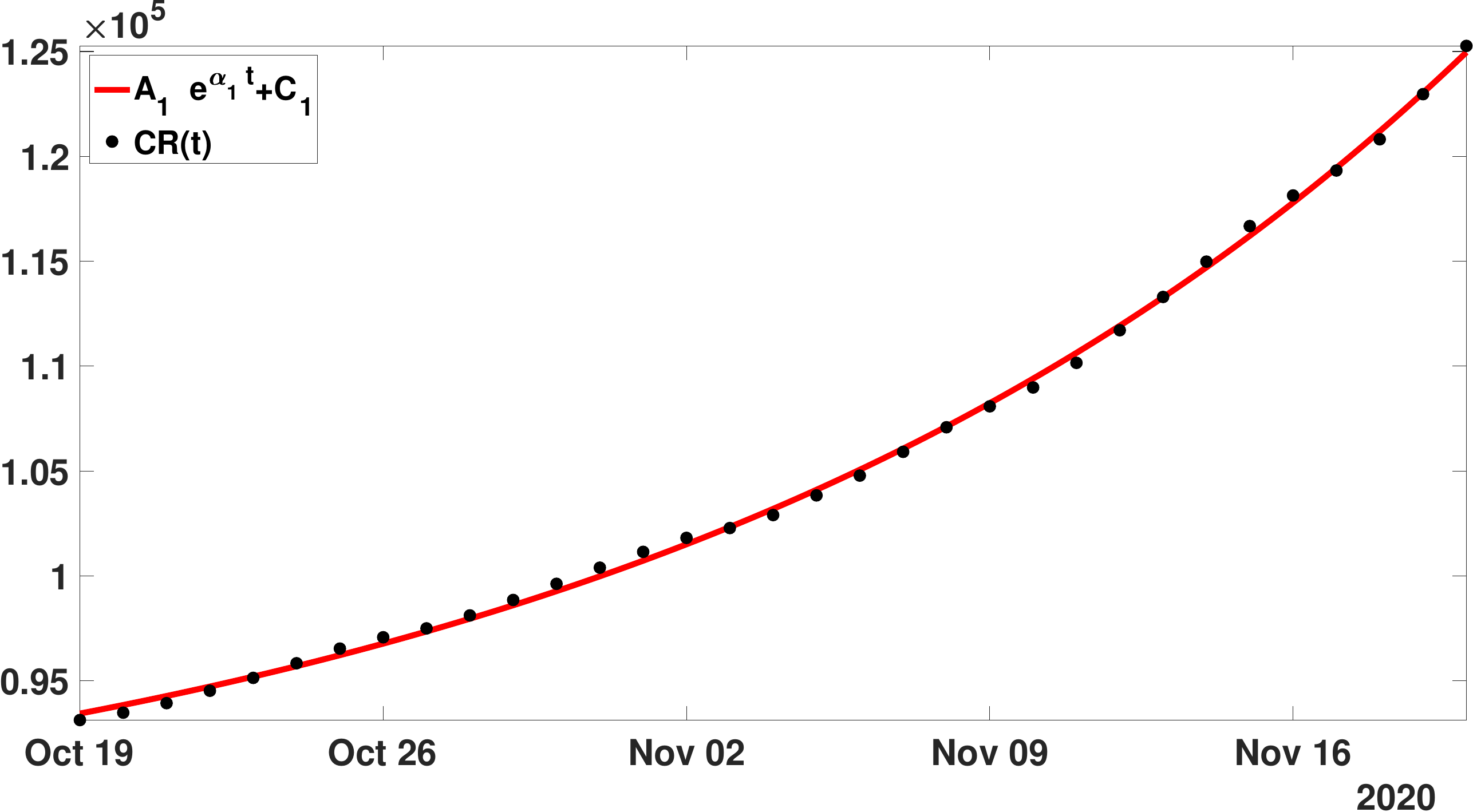}
		\caption{\textit{In this figure, we plot the cumulative number of reported cases  data for Japan  between  $19$ October and  $19$ November $2020$ (black dots). We plot the best fit of the model \eqref{7.1} to the cumulative data (red~curve).  }}\label{Fig10}
	\end{figure}
	Then in the figure below we plot the first residual. That is, 
	$$
	{\rm Residual}_1(t)=\CR(t)- \left[ A_1e^{\alpha_1 t }+C_1\right].
	$$
	\begin{figure}[H]
		\centering
		\includegraphics[scale=0.2]{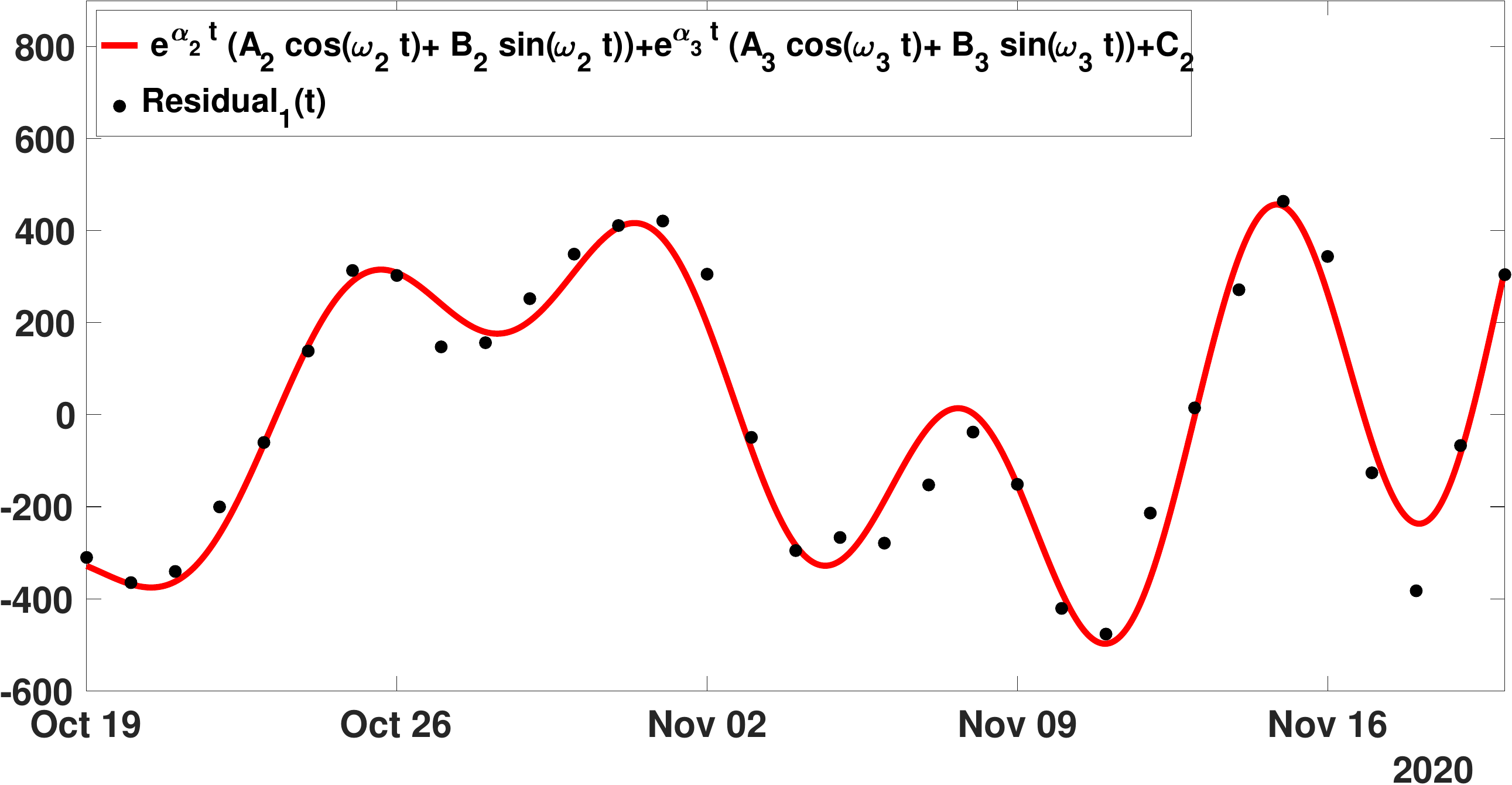}
		\caption{ \textit{ In this figure, we plot the first residual when subtracting  the exponential tendency obtained in Figure \ref{Fig9} to the cumulative reported cases  data between  $19$ October and  $19$ November $2020$ (black dots). We plot the best fit of the model  to the first residual (red curve).  }}\label{Fig11}
	\end{figure}

	\subsection{Monotone property of the cumulative distribution}
	\label{Section5.4}
	
	The influence of the errors made in the estimations (at the early stage of the epidemic) has been considered in the recent article \cite{roda2020difficult}. To understand this problem, let us first consider the case of the rate of transmission $\tau(t)=\tau_0$ in the model \eqref{4.1}.
	
	\medskip 
	\noindent \textbf{From the epidemic model to the data}  Assume that the transmission rate $\tau(t)$ is constant equal to $\tau>0$ in the model \eqref{4.1}.  Then by integrating the $S$-equation in model \eqref{4.1} between $t_0$ and $t$, we obtain
	\begin{equation} \label{5.5}
		S(t)=S_0 e^{\tau  \CI(t)}	
	\end{equation}
	where 
	\begin{equation*} 
		\CI(t)=\int_{t_0}^{t} I(\sigma) d \sigma.
	\end{equation*}
	Moreover 
	$$
	I'(t)=\tau S(t) I(t)- \nu I(t).
	$$
	replacing $S(t)$ by \ref{5.5}, and by integrating between $t_0$ and $t$ we obtain 
	$$
	I(t)=I_0+S_0 \left(1-e^{-\tau \CI(t)}\right)-\nu \CI (t).
	$$
	Remembering that  $\CI(t)'=I(t)$, we conclude that the cumulative number of cases should follow a single ordinary differential equaton 
	\begin{equation} \label{5.6}
		\tcbhighmath[boxrule=2pt,drop fuzzy shadow=blue]{ \CI(t)'= I_0+S_0 \left(1-e^{-\tau \CI(t)}\right)-\nu \CI (t).	}
	\end{equation}
	The system \eqref{5.6} is complemented with the initial distribution  of the model 
	\begin{equation*}
		\tcbhighmath[boxrule=2pt,drop fuzzy shadow=blue]{	\CI(t_0)=\CI_0 \geq 0.}
	\end{equation*}
	This equation should be a good phenomenological model whenever $t \mapsto \tau(t)$ is a constant function.  We refer to \cite{Smith}, and \cite[Chapter 8]{DGLM22}  for a comprehensive presentation on the monotone ordinary differential equations.

	\begin{theorem}  \label{TH5.3} Let $t > t_0$ be fixed. The cumulative number of infectious $\CI(t)$ is strictly increasing with respect to the following quantities
		\begin{itemize}
			\item[{\rm(i)}] $I_0>0$ the initial number of infectious individuals;
			\item[{\rm(ii)}]  $S_0>0$ the initial number of susceptible individuals;
			\item[{\rm(iii)}] $\tau>0$ the transmission rate; 
			\item[{\rm(iv)}] $1/\nu>0$ the average duration of the infectiousness period. 
		\end{itemize}
	\end{theorem}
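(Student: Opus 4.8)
The plan is to recognize \eqref{5.6} as a scalar autonomous Cauchy problem and to run the classical linear comparison lemma, one parameter at a time. Write \eqref{5.6} as $\CI'(t)=g(\CI(t))$, $\CI(t_0)=\CI_0\ge 0$, with
\[
g(x)=g(x;I_0,S_0,\tau,\nu):=I_0+S_0\bigl(1-e^{-\tau x}\bigr)-\nu x\in C^\infty(\R),
\]
the datum $\CI_0$ held fixed. First I would settle existence and positivity: since $1-e^{-\tau x}\le 1$ and $-\nu x\le 0$ for $x\ge 0$ we have $g(x)\le I_0+S_0$, so solutions grow at most linearly and exist on $[t_0,+\infty)$; and since $g(0)=I_0>0$ the solution cannot cross the level $0$ downwards, whence $\CI(t)\ge 0$ for all $t\ge t_0$. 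If $\CI_0>0$ this is already strict; if $\CI_0=0$ then $\CI'(t_0)=g(0)=I_0>0$ launches the solution into $\{x>0\}$, where it remains. In all cases $\CI(t)>0$ for every $t>t_0$. (Equivalently $g$ is strictly concave with $g(0)>0$ and $g(x)\to-\infty$, so it has a unique positive zero $\CI_\infty$ and $\CI(t)\to\CI_\infty$ monotonically, the monotone-ODE picture of \cite{Smith} and \cite[Chapter 8]{DGLM22}.)

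\textbf{Comparison in one parameter.} Fix $I_0,S_0,\nu$ and take $\tau_1<\tau_2$; let $\CI^{(i)}$ solve \eqref{5.6} with $\tau=\tau_i$ and the common datum $\CI_0$, and set $w:=\CI^{(2)}-\CI^{(1)}$, so $w(t_0)=0$. Splitting the difference of the vector fields,
\[
w'(t)=\underbrace{\Bigl[g\bigl(\CI^{(2)}(t);\tau_2\bigr)-g\bigl(\CI^{(1)}(t);\tau_2\bigr)\Bigr]}_{=a(t)\,w(t)}+\underbrace{\Bigl[g\bigl(\CI^{(1)}(t);\tau_2\bigr)-g\bigl(\CI^{(1)}(t);\tau_1\bigr)\Bigr]}_{=:b(t)},
\]
where $a(t):=\int_0^1\partial_x g\bigl(\CI^{(1)}(t)+\theta w(t);\tau_2\bigr)\,d\theta$ is continuous and bounded, $|a(t)|\le S_0\tau_2+\nu$, and $b(t):=\int_{\tau_1}^{\tau_2}\partial_\tau g\bigl(\CI^{(1)}(t);s\bigr)\,ds$. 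Since $\partial_\tau g(x;s)=S_0\,x\,e^{-sx}\ge 0$ for $x\ge 0$, the positivity step gives $b(t)>0$ for all $t>t_0$, and variation of constants yields
\[
w(t)=\int_{t_0}^{t}\exp\!\Bigl(\int_s^{t}a(r)\,dr\Bigr)b(s)\,ds>0\qquad\text{for }t>t_0,
\]
which is assertion (iii).

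\textbf{The other three parameters.} The identical computation applies, only the source term $b$ changes sign-wise. Varying $I_0$: $\partial_{I_0}g\equiv 1$, so $b\equiv I_0^{(2)}-I_0^{(1)}>0$ outright (no positivity step needed), giving (i). Varying $S_0$: $\partial_{S_0}g(x)=1-e^{-\tau x}\ge 0$, strictly positive for $x>0$, giving (ii). For (iv) set $\mu:=1/\nu$ and write $g(x)=I_0+S_0(1-e^{-\tau x})-x/\mu$; then $\partial_\mu g(x)=x/\mu^2\ge 0$, strictly positive for $x>0$, so $\CI(t)$ is strictly increasing in $1/\nu$. In each case $b>0$ on $(t_0,t)$ and the variation-of-constants formula closes the argument.

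The only delicate point — the main obstacle, such as it is — is the \emph{strictness} when $\CI_0=0$: there the two vector fields coincide at $t=t_0$, so strict monotonicity does not follow from the sign of $b$ alone, and one must invoke $\CI'(t_0)=I_0>0$ to push the trajectory into the region $\{x>0\}$ where the relevant partial derivatives of $g$ are strictly positive. Everything else is the classical linear comparison lemma on $[t_0,+\infty)$; alternatively one could simply quote the monotone/continuous dependence theory for scalar cooperative ODEs from \cite{Smith}.
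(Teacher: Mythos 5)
Your proof is correct: the positivity step for solutions of \eqref{5.6}, the splitting $w'(t)=a(t)\,w(t)+b(t)$ with the parameter-derivative source term, and the variation-of-constants formula give exactly the strict monotonicity in $I_0$, $S_0$, $\tau$ and $1/\nu$ claimed in Theorem \ref{TH5.3}, and you correctly isolate the only delicate point, the case $\CI_0=0$. The paper itself does not write out a proof of this theorem — it only points to the monotone-ODE literature (\cite{Smith}, \cite[Chapter 8]{DGLM22}) — and your argument is precisely the standard scalar comparison-lemma proof that this citation stands for, so you are following essentially the same route, just made self-contained.
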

	
	\begin{mybox}{Error in the estimated initial number of infected and transmission rate}
		Assume that the parameters $\chi_1$ 	and $\chi_2$ are estimated with a $95\%$ confidence interval 
		$$
		\chi_{1 ,95\%}^{-} \leq \chi_1 \leq \chi_{1 ,95\%}^{+},
		$$	
		and 
		$$
		\chi_{2 ,95\%}^{-} \leq \chi_2 \leq \chi_{2,95\%}^{+}.
		$$
		We obtain 
		\begin{equation*} \label{2.8}
			I_{0,95\%}^-:=\dfrac{\chi_{1 ,95\%}^{-}  \, \chi_{2 ,95\%}^{-} e^{ \chi_{2 ,95\%}^{-} \, t_0}}{\nu \, f } \leq 	I_0 \leq I_{0,95\%}^+:=\dfrac{\chi_{1 ,95\%}^{+}  \, \chi_{2 ,95\%}^{+} e^{ \chi_{2 ,95\%}^{+} \, t_0}}{\nu \, f },
		\end{equation*}
		and 
		\begin{equation*} \label{2.9}
			\tau_{0, 95\%}^- :=\dfrac{\chi_{2 ,95\%}^{-} +\nu}{S_0}  \leq	\tau_0 \leq \tau_{0, 95\%}^+:=\dfrac{\chi_{2 ,95\%}^{+} +\nu}{S_0} .
		\end{equation*}
	\end{mybox}
	\begin{remark} \label{REM5.4}
		By using the data for mainland China we obtain 
		\begin{equation*}
			\chi_{1 ,95\%}^{-} =1.57, \, \chi_{1 ,95\%}^{+} =5.89, \, \chi_{2 ,95\%}^{-} =0.24, \, \chi_{2 ,95\%}^{+} =0.28.
		\end{equation*}
	\end{remark} 
	In Figure \ref{Fig12}, we plot the upper and lower solutions $\CR^+(t)$ (obtained by using $I_0=I_{0,95\%}^+$ and $\tau_0=\tau_{0, 95\%}^+$) and $\CR^-(t)$  (obtained by using $I_0=I_{0,95\%}^-$ and $\tau_0=\tau_{0, 95\%}^-$) corresponding to the blue region and the black curve corresponds to the best estimated values $I_0=1521$ and $\tau_0=3.3214 \times 10^{-10}$.  
	
	Recall that the final size of the epidemic corresponds to the positive equilibrium of \eqref{5.6} 
	\begin{equation*}
		0= I_0+S_0 \left[1 -\exp \left(- \tau_0 \CI_\infty  \right) \right] - \nu \CI_\infty.
	\end{equation*}
	In Figure \ref{Fig12}  the changes in the parameters $I_0$ and $\tau_0$ (in \eqref{2.8}-\eqref{2.9}) do not affect significantly the final size. 
	\begin{figure}[H]
		\centering
		\includegraphics[scale=0.2]{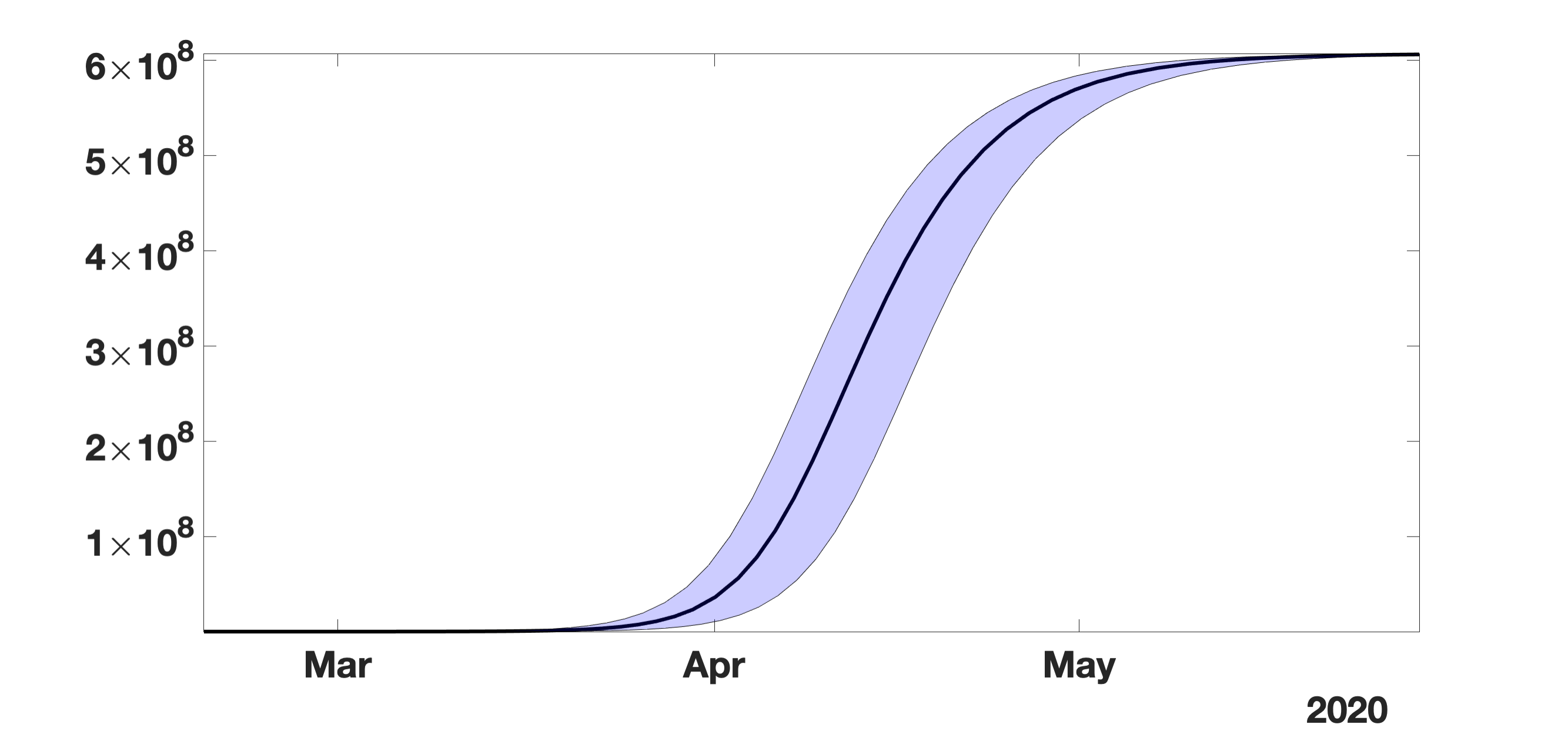}
		
		\caption{\textit{In this figure, the black curve corresponds to the cumulative number of reported cases $\CR(t)$ obtained from the model \eqref{5.4} with $\CR'(t)=\nu f I(t)$ by using the values $I_0=1521$ and $\tau_0=3.32 \times 10^{-10}$ obtained from our method and the early data from February 19 to March 1. The blue region corresponds the $95\%$ confidence  interval when the rate of transmission $\tau(t)$ is constant and equal to the estimated value $\tau_0=3.32 \times 10^{-10}$.   }}\label{Fig12}
	\end{figure}

	\begin{remark} \label{REM5.5}
		Theorem \ref{TH5.3} can be used day by day to fit the cumulative number of infected $\CI(t)$. Indeed, if we assume that $\tau(t)$ is a day-by-day piece-wise constant, we can use the monotone properties to find a unique daily value for $\tau$ to fit the cumulative data to obtain a perfect match. Such an algorithm was developed in \cite{Demongeot20b}.  
	\end{remark}
	
	\section{Modeling a single epidemic wave}
	\label{Section6}
	\subsection{What factors govern the transmission of pathogens}
	\label{Section6.1}
	
	Estimating the average transmission rate is one of the most crucial challenges in the epidemiology
	of communicable diseases. This rate conditions the entry into the epidemic phase of the disease and its
	return to the extinction phase, if it has diminished sufficiently. It is the combination of three factors, one,
	the coefficient of virulence, linked to the infectious agent (in the case of infectious transmissible diseases),
	the other, the coefficient of susceptibility, linked to the host (all summarized into the probability of
	transmission), and also, the number of contact per unit of time between individuals (see \cite{Magal14}). The coefficient of virulence may change over time due to mutation over the course of the
	disease history. The second and third also, if mitigation measures have been taken. This was the case
	in China from the start of the pandemic (see \cite{Qiu20}. Monitoring the decrease in the
	average transmission rate is an excellent way to monitor the effectiveness of these mitigation measures.
	Estimating the rate is therefore a central problem in the fight against epidemics.

	The transmission rate may vary over time, and it may significantly impact epidemic outbreaks.  As explained in \cite{Magal14}, the transmission rate can be decomposed as follow
	$$
	\tcbhighmath[boxrule=2pt,drop fuzzy shadow=blue]{ \tau(t)=\dfrac{\text{the probability of transmission}}{\text{the average duration of a contact}}.} 
	$$
	In this formula, the transmission probability may depend on climatic changes (temperature, humidity, ultraviolet, and other external factors), and the average duration of contact depends on human social behavior. It can be noted that the transmission rate is proportional to the inverse of the average contact duration because the shorter the average contact duration, the greater the number of contacts per unit of time.
	
	\begin{remark} \label{REM6.1}
		A model was proposed by \cite{Chowell} to describe the evolution of the transmission rate during a single epidemic wave. Namely, the model  is the following 
		$$
		\tau(t)=\left\{
		\begin{array}{ll}
			\tau_0,& \text{ if }  t_0 \leq t \leq N, \\
			\tau_0 \left(p \, e^{-\mu(t-N)}+(1-p)\right), & \text{ if }  t \geq N,
		\end{array}
		\right. 
		$$
		where $N$ corresponds to the day when the public measures take effect,  and $\mu$ is the rate at which they take effect (this parameter describes the speed at which the public measures are taking place). The fraction $0 \leq p \leq 1$ is the fraction by which the transmission rate is reduced when applying public measures. We can rewrite this model shortly by using $t^+=\max(t,0)$, the positive part of $t$. That is, 
		$$
		\tau(t)= \tau_0 \left(p \, e^{-\mu (t-N)^+}+(1-p)\right),
		$$
		Such a model was successfully used by \cite{Augeraud-Veron, Liu20b, Liu20c} and others. 
		
		\medskip 
		Nevertheless, the model for joining the end of an epidemic wave to the next epidemic wave is still unknown. A tentative model was proposed in \cite{Augeraud-Veron}. 
	\end{remark}
	\medskip 
	Contact patterns are impacted by social distancing measures. The average number of contacts per unit of time depends on the density of population \cite{Rocklov20, seligmann2020summer}. The probability of transmission depends of the virulence of the pathogen which can depend on the  temperature, the humidity, and the Ultraviolet \cite{Demongeot20a, Wang20}.  In COVID-19 the level of susceptibility may depend on blood group and genetic lineage. It is indeed suspected that the 
	\begin{itemize}
		\item  Blood group\cite{Guillon08} : Blood group O is associated with a lower susceptibility to SARS-CoV2;   
		\item Genetic lineage \cite{Zeberg20} \: A gene cluster inherited
		from Neanderthal has been identified as a risk factor for severe symptoms.
	\end{itemize}
	
	\subsection{More results and references about the time dependent transmission rate modeling}
	\label{Section6.2}
	Throughout this section, the parameter $S_0=1.4 \times 10^9$ will be the entire population of mainland China (since COVID-19 is a newly emerging disease).  The actual number of susceptibles $S_0$ can be smaller since some individuals can be partially (or totally) immunized by previous infections or other factors. This is also true for Sars-CoV2, even if COVID-19 is a newly emerging disease.

	\medskip 
	At the early beginning of the epidemic, the average duration of the infectious period  $1/\nu$ is unknown, since the virus has never been investigated in the past. Therefore, at the early beginning of the COVID-19 epidemic, medical doctors and public health scientists used previously estimated average duration of the infectious period to make some public health recommendations. Here we show that the average infectious period is impossible to estimate by using only the time series of reported cases, and must therefore be identified by other means. Actually, with the data of Sars-CoV2 in mainland China, we will fit the cumulative number of the reported case almost perfectly for any non-negative value  $1/\nu<3.3$ days. In the literature, several  estimations were obtained: $11$ days in \cite{Zhou},  $9.5$ days in \cite{Hu},  $8$ days in \cite{Ma}, and  $3.5$ days in \cite{Li}. The recent survey by Byrne et al. \cite{byrne2020inferred} focuses on this subject.  
	
	\medskip  
	\begin{mybox}{Result}
		In Section \ref{Section6.4}, our analysis shows that  
		\begin{itemize}
			\item   It is hopeless to estimate the exact value of the duration of infectiousness by using SI models. Several values of the average duration of the infectious period give the exact same fit to the data. 
			\item  We can estimate an upper bound for the duration of infectiousness by using SI models. In the case of Sars-CoV2 in mainland China, this upper bound is $3.3$ days. 
		\end{itemize}
	\end{mybox}
	
	In  \cite{rothe2020transmission}, it is reported that transmission of COVID-19 infection may occur from an infectious individual who is not yet symptomatic.  In \cite{WHO}, it is reported that COVID-19 infected individuals generally develop symptoms, including mild respiratory symptoms and fever,  on average $5-6$ days after the infection date (with a confience of $95\%$, range $1-14$ days). In \cite{Yang}, it is reported that the median time prior to symptom onset is 3 days, the shortest 1 day, and the longest 24 days. It is evident that these time periods play an important role in understanding COVID-19 transmission dynamics. Here the fraction of reported individuals $f$  is unknown as well.

	\medskip  
	\begin{mybox}{Result}
		In Section \ref{Section6.4}, our analysis shows that: 
		\begin{itemize}
			\item It is hopeless to estimate the fraction of reported by using the SI models. Several values for the fraction of reported give the exact same fit to the data.
			\item We can estimate a lower bound for the fraction of unreported. We obtain $3.83 \times 10^{-5} < f \leq 1$. This lower  bound is not significant. Therefore we can say anything about the fraction of unreported from this class of models.
		\end{itemize}
	\end{mybox}
	As a consequence,  the parameters $1/\nu$ and $f$ have to be estimated by another method, for instance by  a direct survey methodology that should be employed on an appropriated sample in the population in order to evaluate the two parameters.

	The goal of this section is to focus on the estimation of the two remaining parameters. Namely, knowing the above-mentioned parameters, we plan to identify 
	\begin{itemize}
		\item $I_0$ the initial number of infectious at time $t_0$; 
		\item $\tau(t)$ the rate of transmission at time $t$.
	\end{itemize}
	This problem has already been considered in several articles. In the early 70s, London and Yorke \cite{London73, Yorke73}  already discussed the time dependent rate of transmission in the context of measles, chickenpox and mumps. More recently, \cite{Wang-Ruan} the question of reconstructing the rate of transmission was considered for the 2002-2004 SARS outbreak in China. In \cite{Chowell}  a specific form was chosen for the rate of transmission and applied to the Ebola outbreak in Congo.  Another approach was also proposed in  \cite{Smirnova}. 
	
	\subsection{Why do we need a time-dependent transmission rate?}
	\label{Section6.3}
	In Figure \ref{Fig13}, we observe that the SI model with a constant transmission rate initially fits the data well. With this choice of parameters, the SI model is also supposed with the exponential function. But the model and the exponential function diverge relatively rapidly from the data. It is easy to understand that once people were informed about the COVID-19 outbreak, they tried to protect themself, and the number of contacts per unit of time then reduced gradually. That is, the transmission rate gradually decreased. 
	\begin{figure}[H]
		\centering
		\vspace{-0.3cm}
		\includegraphics[scale=0.9]{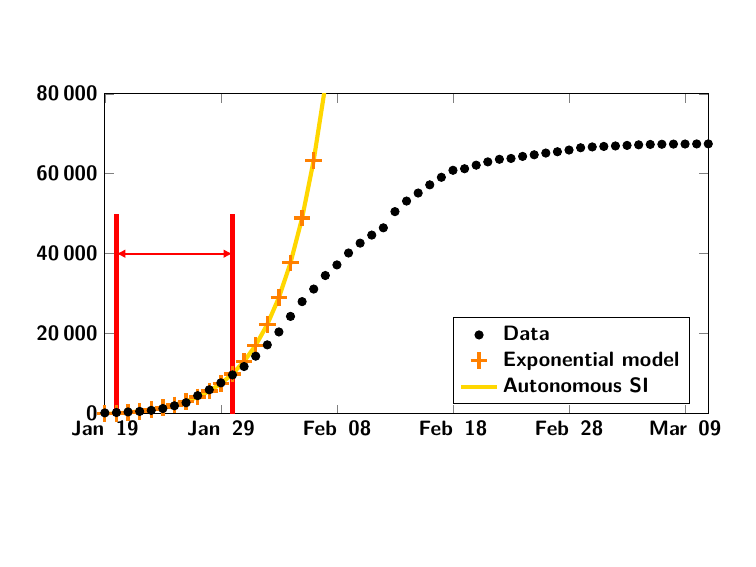}
		\vspace{-1.2cm}
		\caption{{\textit{ In this figure, the black dots represent the cumulative number of cases for China (we a correction for the jump presented in Figure \ref{Fig2}). The period marked in red corresponds to the period considered in Figure \ref{Fig8}. The yellow curve corresponds to the number of infected obtained using model \eqref{4.1} with a constant rate of transmission $\tau(t)$. 
					We observe a rapid divergence between the epidemic model and the data whenever the transmission rate is constant with time.}}}\label{Fig13}
	\end{figure}

	\subsection{Theoretical formula for $\tau(t)$}
	\label{Section6.4}
	By using the S-equation of model \eqref{4.1}  we obtain 
	\begin{equation*}
		S(t)=S_0 \exp \left(-\int_{t_0}^t  \tau(\sigma)\, I( \sigma) \d \sigma \right),
	\end{equation*}
	next by using the I-equation of model \eqref{4.1} we obtain 
	\begin{equation*}
		I'(t)=S_0 \exp \left(-\int_{t_0}^t  \tau(\sigma)\, I( \sigma) \d \sigma \right) \tau(t)\, I(t) - \nu I(t),
	\end{equation*}
	and by taking the integral between $t$ and $t_0$ we obtain a Volterra  integral equation for the cumulative number of infectious 
	\begin{equation}  \label{6.1}
		\CI'(t)= I_0+S_0 \left[1 -\exp \left( -\int_{t_0}^t  \tau(\sigma)\, I( \sigma)  \d \sigma\right) \right] - \nu \CI(t),
	\end{equation}
	which is equivalent to (by using \eqref{4.3})
	\begin{equation} \label{6.2}
		\CR'(t)= \nu \, f  \left(  I_0+S_0 \left[1 -\exp \left( - \dfrac{1}{\nu \, f} \int_{t_0}^t  \tau(\sigma) \, \CR'( \sigma)  \d \sigma\right) \right]   \right)  +\nu \, \CR_0- \nu \CR(t). 
	\end{equation}
	The following result permits to obtain a perfect match between the SI model and the time-dependent rate of transmission $\tau(t)$. 
	\begin{theorem} \label{TH6.2} Let $S_0$, $\nu$, $f$, $I_0>0$ and $\CR_0\geq 0$ be given. Let $t \to I(t)$ be the second component of system \eqref{4.1}. Let $\widehat{\CR} : [t_0, \infty) \to \R$  be a two times continuously differentiable function satisfying 
		\begin{equation}  \label{6.3}
			\widehat{\CR} (t_0)=\CR_0, 
		\end{equation}
		\begin{equation}  \label{6.4}
			\widehat{\CR}'(t_0)=\nu \, f \, I_0,
		\end{equation}
		\begin{equation}  \label{6.5}
			\widehat{\CR}'(t)>0, \forall t  \geq t_0,		
		\end{equation}
		and 
		\begin{equation}  \label{6.6}
			\nu f  \left(I_0+S_0 \right)-\widehat{\CR}'(t)-\nu \left( \widehat{\CR}(t) - \CR_0\right) >0, \forall t  \geq t_0.	 
		\end{equation}
		Then 
		\begin{equation} \label{6.7}
			\widehat{\CR}(t)=\CR_0+ \nu f \int_{t_0}^t I \left(s \right) ds, \forall t \geq t_0, 	
		\end{equation}
		if and only if 
		\begin{equation} \label{6.8}
			\tau(t)=  \dfrac{ \nu f \left(\dfrac{\widehat{\CR}''(t)}{\widehat{\CR}'(t)} +\nu \right)  }{ \nu f  \left(I_0+S_0 \right)-\widehat{\CR}'(t)-\nu \left( \widehat{\CR}(t) -\CR_0  \right)  }.
		\end{equation}
	\end{theorem}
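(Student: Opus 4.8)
The statement is an inversion formula, and both directions reduce to recognizing a logarithmic derivative. The plan is to work throughout with the scalar reformulation \eqref{6.2}, which --- as derived just above the statement --- is equivalent to the SI system \eqref{4.1} via $\CR(t)=\CR_0+\nu f\,\CI(t)$ and $\CI'=I$; in particular, whenever $I$ is the second component of \eqref{4.1}, the function $\CR_0+\nu f\int_{t_0}^t I$ solves \eqref{6.2}. It is convenient to abbreviate
\[
P(t):=\nu f\,(I_0+S_0)-\widehat{\CR}'(t)-\nu\bigl(\widehat{\CR}(t)-\CR_0\bigr),
\]
so that hypothesis \eqref{6.6} says exactly $P(t)>0$ for $t\ge t_0$, and \eqref{6.8} reads $\tau(t)=\nu f\bigl(\widehat{\CR}''(t)/\widehat{\CR}'(t)+\nu\bigr)/P(t)$. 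Two facts will be used repeatedly: $P'(t)=-\widehat{\CR}''(t)-\nu\widehat{\CR}'(t)$, and $P(t_0)=\nu f\,S_0$, the latter being exactly what the compatibility conditions \eqref{6.3}--\eqref{6.4} provide.

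\emph{Necessity.} Assuming \eqref{6.7}, one has $\widehat{\CR}'=\nu f\,I$, hence $\widehat{\CR}$ solves \eqref{6.2}; solving that relation algebraically for the exponential term gives $\exp\bigl(-\tfrac{1}{\nu f}\int_{t_0}^t\tau\,\widehat{\CR}'\bigr)=P(t)/(\nu f\,S_0)$, whose right side is automatically positive, being equal to a positive exponential. Taking logarithms, differentiating in $t$, and using $P'=-\widehat{\CR}''-\nu\widehat{\CR}'$ gives $\tfrac{1}{\nu f}\tau(t)\,\widehat{\CR}'(t)=-P'(t)/P(t)$; dividing by $\widehat{\CR}'(t)>0$, which is allowed thanks to \eqref{6.5}, yields \eqref{6.8}.

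\emph{Sufficiency.} Conversely, suppose $\tau$ is given by \eqref{6.8}. By \eqref{6.5}--\eqref{6.6} the function $P$ is continuous and strictly positive, so $\tau$ is continuous; let $(S,I)$ be the solution of \eqref{4.1} with this $\tau$, and set $\CR(t):=\CR_0+\nu f\int_{t_0}^t I$, which solves \eqref{6.2}. I would show that $\widehat{\CR}$ solves the same equation with the same initial value and then conclude by uniqueness. Indeed, \eqref{6.8} gives $\tau(\sigma)\,\widehat{\CR}'(\sigma)=-\nu f\,P'(\sigma)/P(\sigma)$, so $\tfrac{1}{\nu f}\int_{t_0}^t\tau\,\widehat{\CR}'=\log P(t_0)-\log P(t)=\log(\nu f\,S_0)-\log P(t)$, whence $\exp\bigl(-\tfrac{1}{\nu f}\int_{t_0}^t\tau\,\widehat{\CR}'\bigr)=P(t)/(\nu f\,S_0)$; substituting this into the right-hand side of \eqref{6.2} and simplifying with the definition of $P$ collapses that side to $\widehat{\CR}'(t)$. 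Together with $\widehat{\CR}(t_0)=\CR_0$ from \eqref{6.3}, this exhibits $\widehat{\CR}$ and $\CR$ as two solutions of \eqref{6.2} with identical initial data.

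\emph{The main obstacle.} The only non-mechanical step is the uniqueness used to identify $\widehat{\CR}$ with $\CR$. I would obtain it by recasting \eqref{6.2} as the planar initial value problem
\[
\CR'=\nu f\bigl(I_0+S_0(1-e^{-w})\bigr)+\nu\CR_0-\nu\CR,\qquad w'=\tfrac{1}{\nu f}\,\tau(t)\,\CR',\qquad \bigl(\CR(t_0),w(t_0)\bigr)=(\CR_0,0),
\]
and then substituting the first equation into the second: the resulting right-hand side is locally Lipschitz in $(\CR,w)$ --- continuity of $\tau$, already established, is all that is required --- so the Picard--Lindel\"of theorem applies and the solution is unique. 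Hence $\widehat{\CR}=\CR=\CR_0+\nu f\int_{t_0}^t I$, i.e.\ \eqref{6.7}. The whole scheme rests on every division performed --- by $\widehat{\CR}'(t)$ and by $P(t)$ --- being legitimate, which is precisely the purpose of hypotheses \eqref{6.5}--\eqref{6.6}, while \eqref{6.3}--\eqref{6.4} pin down the constant of integration $P(t_0)=\nu f\,S_0$.
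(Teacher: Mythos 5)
Your proof is correct and follows essentially the same route as the paper's: isolate the exponential term in the integrated equation, take logarithms and differentiate to obtain \eqref{6.8}, and, for the converse, show that $\widehat{\CR}$ satisfies the same Volterra-type equation as $\CR_0+\nu f\int_{t_0}^t I(s)\,ds$ and conclude by uniqueness. The differences are only cosmetic: you work with \eqref{6.2} in the $\widehat{\CR}$ variable where the paper works with \eqref{6.1} in the $(I,\CI)$ variables, and you make explicit (via the planar ODE reformulation and Picard--Lindel\"of) the uniqueness step that the paper merely asserts.
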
 
	\begin{proof} Assume first \eqref{6.7} is satisfied.  Then by using equation \eqref{6.1} we deduce that 
		\begin{equation*}
			S_0 \exp \left(- \int_{t_0}^t \tau(\sigma) I(\sigma) d \sigma \right) = I_0+S_0- I(t)-  \nu \CI(t).
		\end{equation*}
		Therefore 
		\begin{equation*}
			\int_{t_0}^t \tau(\sigma) I(\sigma) d \sigma =\ln \left[\dfrac{S_0}{ I_0+S_0-I(t)-\nu \CI(t) } \right] =\ln \left(S_0 \right)- \ln\left[ I_0+S_0-I(t)-\nu \CI(t)\right] 
		\end{equation*}
		therefore by taking the derivative on both side 
		\begin{equation} \label{6.9}
			\tau(t) I(t)=  \dfrac{ I'(t) +\nu I(t) }{  I_0+S_0-I(t)-\nu \CI(t)  } \Leftrightarrow \tau(t)=  \dfrac{\dfrac{I'(t)}{I(t)}  +\nu  }{  I_0+S_0-I(t)-\nu \CI(t)  }
		\end{equation}
		and by using the fact that $\CR(t)-\CR_0=\nu f \CI(t)  $ we obtain \eqref{6.8}. 
		
		Conversely, assume that $\tau(t)$ is given by  \eqref{6.8}. Then if we define $\widetilde{I}(t)= \widehat{\CR}'(t)/ \nu f$ and $\widetilde{\CI}(t)= \left(\widehat{\CR}(t)-\CR_0 \right)/\nu f$, by using \eqref{6.3} we deduce that 
		$$
		\widetilde{\CI}(t)=\int_{t_0}^t \widetilde{I}(\sigma) d \sigma,
		$$
		and by using \eqref{6.4} 
		\begin{equation}\label{6.10}
			\widetilde{I}(t_0)=I_0. 
		\end{equation}
		Moreover from \eqref{6.8} we deduce that $\widetilde{I}(t)$ satisfies \eqref{6.9}. By using \eqref{6.10} we deduce that $t \to \widetilde{\CI}(t)$ is a solution of \eqref{6.1}. By uniqueness of the solution of \eqref{6.1}, we deduce that  $\widetilde{\CI}(t)=\CI(t), \forall t \geq t_0$ or equivalently $\CR(t)=\CR_0+ \nu f \int_{t_0}^t I \left(s \right) ds, \forall t \geq t_0$. The proof is completed. 
	\end{proof}
	
	The formula \eqref{6.8} was already obtained by Hadeler \cite[see Corollary 2]{Hadeler1}.
	\subsection{Explicit formula for  $\tau(t)$ and $I_0$}
	\label{Section6.5}
	
	In 1766,  Bernoulli \cite{Bernoulli1766} investigated an epidemic phase followed by an endemic phase. This appears clearly in Figures 9 and 10 in \cite{Dietz-Heesterbeek}  who revisited the original article of Bernoulli. We also refer to \cite{Blower04} for another article revisiting the original work of Bernoulli. A similar article has been re-written in French as well by  \cite{Bernoulli-Chapelle}.  In 1838, Verhulst \cite{Verhulst1838} introduced the same equation to describe population growth.  Several works comparing cumulative reported cases data and the Bernoulli--Verhulst model appear in the literature  (see \cite{Hsieh, Wang-Wu-Yang, Zhou-Yan}).  The Bernoulli--Verhulst model is sometimes called Richard's model,\index{Richardsmodel@Richard's model} although Richard's work came much later in 1959.

	Many phenomenological models have been compared to the data during the first phase of the COVID-19 outbreak. We refer to the paper of \cite{Tsoularis} for a nice survey on the generalized logistic equations. Let us consider here for example, the   Bernoulli-Verhulst equation 
	\begin{equation} \label{6.11}
		\CR'(t)=\chi_2 \, \CR(t)\left( 1- \left(\dfrac{\CR(t)}{\CR_\infty}  \right)^\theta \right), \forall t \geq t_0,
	\end{equation}
	supplemented with the initial data 
	\begin{equation*}
		\CR(t_0)=\CR_0 \geq 0. 
	\end{equation*}
	Let us recall the explicit formula for the solution  of \eqref{6.11} 
	\begin{equation}\label{6.12}
		\CR(t)=\dfrac{e^{\chi_2 (t-t_0)} \CR_0 }{\left[  1+ \dfrac{\chi_2 \theta}{\CR_\infty^\theta} \int_{t_0}^t  \left(e^{\chi_2  \left(\sigma- t_0 \right)  } \CR_0 \right)^\theta d \sigma \right]^{ 1/\theta} }=\dfrac{e^{\chi_2 (t-t_0)} \CR_0 }{\left[  1+ \dfrac{ \CR_0^\theta}{\CR_\infty^\theta}  \left(e^{\chi_2 \theta   \left(t- t_0 \right)  } -1 \right) \right]^{ 1/\theta} }.
	\end{equation}
	The model's main advantage is that it is rich enough to fit the data, together with a limited number of parameters. To fit this model to the data, we only need to estimate four parameters $\chi_2, \theta, \CR_0$, and $\CR_\infty$.  
	
	\begin{remark} \label{REM6.3}
		Plenty of possibilities exist to fit the data, including split functions (irregular functions with many parameters) and others. In \cite{Burger},  they proposed several possible alternatives, including a generalized logistic equation of the form
		\begin{equation*} 
			\CR'(t)=\chi_2 \, \CR(t)^\theta \left( 1- \left(\dfrac{\CR(t)}{\CR_\infty}  \right) \right), \forall t \geq t_0.
		\end{equation*}
		The above equation has no explicit solution. Therefore it is more difficult to use it than the Bernoulli-Verhulst model. We also refer to  \cite{Pelinovsky22, Pelinovsky20}  for more phenomenological model to fit an epidemic wave. 
	\end{remark}
	
	\begin{figure}[H]
		\centering
		\includegraphics[scale=0.2]{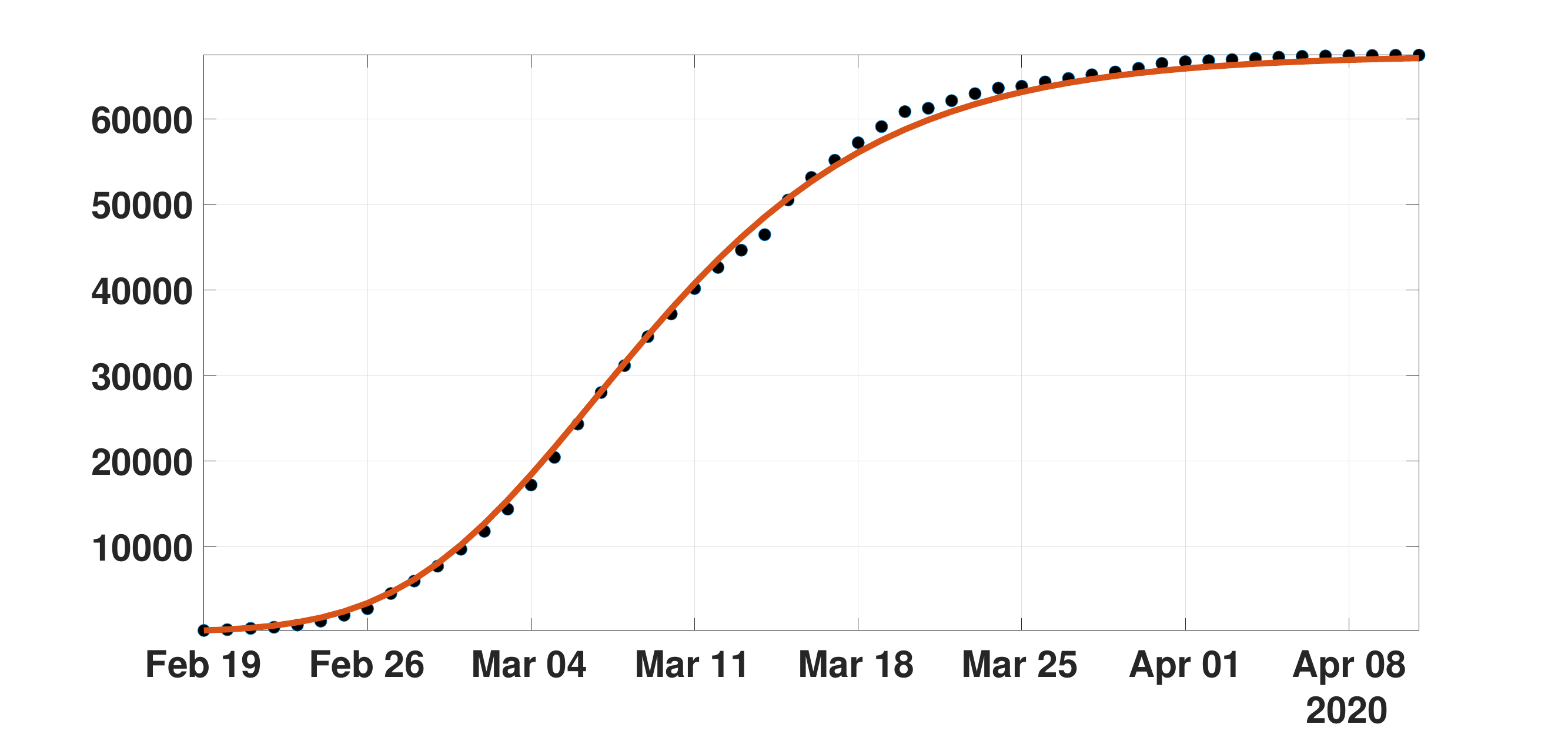}
		\caption{\textit{In this figure, we plot the best fit of the Bernoulli-Verhulst model to the cumulative number of reported cases of COVID-19 in China. We obtain $\chi_2=0.66$ and $\theta=0.22$. The black dots correspond to data for the cumulative number of reported cases and the red curve corresponds to the model. }}\label{Fig14}
	\end{figure}

	\begin{mybox}{Estimated initial number of infected}
		By combining \eqref{6.1}  and the Bernoulli-Verhulst equation  \eqref{6.11}  for $t \to \CR(t)$, we deduce the initial number of infected 
		\begin{equation}\label{6.13}
			I_0=  \dfrac{\CR'(t_0)}{\nu \, f }= \dfrac{\chi_2 \, \CR_0 \left( 1- \left(\dfrac{\CR_0 }{\CR_\infty}  \right)^\theta \right) }{\nu \, f }  .
		\end{equation}
	\end{mybox}
	\begin{remark}  
		We fix $f=0.5$, from the COVID-19 data in mainland China  and formula \eqref{6.13} (with $\CR_0=198$), we obtain 
		$$
		I_0= 1909 \text{ for } \nu=0.1, 
		$$
		and
		$$
		I_0= 954 \text{ for } \nu=0.2.
		$$
	\end{remark}
	
	By using \eqref{6.11} we deduce that 
	\begin{equation*}
		\begin{array}{ll}
			\CR''(t)&=\chi_2 \, \CR'(t)\left( 1- \left(\dfrac{\CR(t)}{\CR_\infty}  \right)^\theta \right)-\dfrac{\chi_2 \theta}{\CR_\infty^\theta} \, \CR(t)  \left(\CR(t)  \right)^{\theta-1}  \CR'(t)\\
			&=\chi_2 \, \CR'(t)\left( 1- \left(\dfrac{\CR(t)}{\CR_\infty}  \right)^\theta \right)-\dfrac{\chi_2 \theta}{\CR_\infty^\theta} \,   \left(\CR(t)  \right)^{\theta}  \CR'(t),	
		\end{array}
	\end{equation*}
	therefore 
	\begin{equation} \label{6.14}
		\CR''(t)=\chi_2 \, \CR'(t)\left( 1-(1+\theta) \left(\dfrac{\CR(t)}{\CR_\infty}  \right)^\theta \right).
	\end{equation}
	\begin{mybox}{Estimated rate of transmission}
		By using the Bernoulli-Verhulst equation \eqref{6.11} and  substituting \eqref{6.14}  in \eqref{6.8}, we obtain
		\begin{equation}\label{6.15}
			\tau(t)=  \dfrac{  \nu \, f \left( \chi_2 \, \left( 1-(1+\theta) \left(\dfrac{\CR(t)}{\CR_\infty}  \right)^\theta \right) +\nu \right) }{\nu \, f  \left(I_0+S_0 \right)+ \nu \CR_0 - \CR(t) \left(\chi_2 \left( 1- \left(\dfrac{\CR(t)}{\CR_\infty}  \right)^\theta \right)+ \nu   \right) }.
		\end{equation}
		This formula \eqref{6.15} combined with \eqref{6.12}  gives an explicit formula for the  rate of transmission. 
	\end{mybox}
	Since $\CR(t) < \CR_\infty$,   by considering the sign of the numerator and the denominator of \eqref{6.15}, we obtain the following proposition. 
	\begin{proposition}  \label{PROP6.4}
		The rate of transmission  $\tau(t)$ given by \eqref{6.15} is non negative for all $t \geq t_0$ if 
		\begin{equation} \label{6.16}
			\nu\geq \chi_2 \, \theta,
		\end{equation}
		and 
		\begin{equation}\label{6.17}
			f  \left(I_0+S_0 \right)+ \nu \CR_0 > \CR_\infty \left(\chi_2+ \nu  \right).
		\end{equation}
	\end{proposition}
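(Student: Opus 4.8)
\emph{Proof strategy.} The plan is to write the right–hand side of \eqref{6.15} as a ratio $\tau(t)=N(t)/D(t)$ with numerator
$$N(t)=\nu f\left(\chi_2\left(1-(1+\theta)\left(\dfrac{\CR(t)}{\CR_\infty}\right)^\theta\right)+\nu\right)$$
and denominator
$$D(t)=\nu f\left(I_0+S_0\right)+\nu\CR_0-\CR(t)\left(\chi_2\left(1-\left(\dfrac{\CR(t)}{\CR_\infty}\right)^\theta\right)+\nu\right),$$
and then to show separately that \eqref{6.16} forces $N(t)>0$ and that \eqref{6.17} forces $D(t)>0$ for every $t\ge t_0$; once both are positive, $\tau(t)=N(t)/D(t)\ge 0$ follows at once. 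Before doing either, I would record that, by the explicit solution \eqref{6.12} with $\CR_0<\CR_\infty$, the reported curve stays in the range $0\le\CR_0\le\CR(t)<\CR_\infty$, so the quantity $u(t):=(\CR(t)/\CR_\infty)^\theta$ lies in $[0,1)$; this strict upper bound $\CR(t)<\CR_\infty$ is the key structural fact both estimates rely on.

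For the numerator I would observe that $N(t)=\nu f\bigl((\chi_2+\nu)-\chi_2(1+\theta)u(t)\bigr)$, and that the map $u\mapsto(\chi_2+\nu)-\chi_2(1+\theta)u$ is affine with negative slope $-\chi_2(1+\theta)$, hence strictly decreasing on $[0,1)$. Its infimum over $[0,1)$ is therefore its limiting value at $u=1$, namely $\chi_2+\nu-\chi_2(1+\theta)=\nu-\chi_2\theta$, which is nonnegative precisely by \eqref{6.16}. This gives $N(t)>0$ for all $t\ge t_0$. (It is worth noting, via \eqref{6.14}, that the bracket in $N(t)$ equals $\CR''(t)/\CR'(t)+\nu$, so \eqref{6.16} is nothing but the requirement that the single-peaked incidence curve $\CR'$ satisfy $\CR''/\CR'\ge -\nu$ down to the end of the wave, where $\CR''/\CR'\to-\chi_2\theta$.)

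For the denominator I would bound the subtracted term: since $0\le 1-(\CR(t)/\CR_\infty)^\theta<1$ and $\CR(t)<\CR_\infty$, we get
$$\CR(t)\left(\chi_2\left(1-\left(\dfrac{\CR(t)}{\CR_\infty}\right)^\theta\right)+\nu\right)<\CR_\infty\left(\chi_2+\nu\right),$$
so that $D(t)>\nu f(I_0+S_0)+\nu\CR_0-\CR_\infty(\chi_2+\nu)$, which is nonnegative in view of \eqref{6.17}; hence $D(t)>0$ and $\tau(t)>0$. The step I expect to be the real content — and the only delicate one — is this denominator estimate: unlike the numerator bound, it is not merely a sign check but a quantitative constraint, because $D(t)$ decreases along the wave toward the limit $\nu\bigl(f(I_0+S_0)+\CR_0-\CR_\infty\bigr)$ as $t\to\infty$ (since $\CR'(t)\to0$ and $\CR(t)\to\CR_\infty$), so positivity there genuinely requires that the (effectively susceptible) pool be large relative to the final size; \eqref{6.17} is the clean sufficient condition securing this, and one should check that it is comfortably satisfied in the COVID-19 application, where $S_0$ is the whole population and $\CR_\infty$ the comparatively tiny cumulative count. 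A minor refinement, if a sharper statement were wanted, would be to note that under \eqref{6.16} the function $r\mapsto r\bigl(\chi_2(1-(r/\CR_\infty)^\theta)+\nu\bigr)$ is nondecreasing on $[0,\CR_\infty]$ (its derivative is exactly $N(\cdot)/(\nu f)\ge0$), so its supremum on $[\CR_0,\CR_\infty)$ is its value $\nu\CR_\infty$ at $\CR_\infty$, which tightens the denominator bound and shows the two hypotheses \eqref{6.16}–\eqref{6.17} are intertwined rather than independent.
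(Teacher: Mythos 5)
Your proof follows essentially the same route as the paper: the paper's entire argument is the sentence preceding the proposition (``since $\CR(t)<\CR_\infty$, consider the sign of the numerator and the denominator of \eqref{6.15}''), and you simply carry out that sign check, handling the numerator via \eqref{6.16} and the monotone affine bound in $u(t)=(\CR(t)/\CR_\infty)^\theta\in[0,1)$, and the denominator via $\CR(t)\bigl(\chi_2(1-u(t))+\nu\bigr)<\CR_\infty(\chi_2+\nu)$. One caveat on the denominator step: the lower bound you derive is $\nu f\left(I_0+S_0\right)+\nu\CR_0-\CR_\infty(\chi_2+\nu)$, whose nonnegativity is the condition $\nu f\left(I_0+S_0\right)+\nu\CR_0\geq\CR_\infty(\chi_2+\nu)$, whereas \eqref{6.17} as printed reads $f\left(I_0+S_0\right)+\nu\CR_0>\CR_\infty(\chi_2+\nu)$, without the factor $\nu$ in front of $f\left(I_0+S_0\right)$; since $\nu<1$ in the applications, the printed hypothesis does not imply the inequality you invoke, so your final step silently corrects what is most plausibly a missing factor $\nu$ in \eqref{6.17} (a slip that propagates to the bound \eqref{6.19}), and as printed the proposition can in fact fail for large $t$, where the denominator tends to $\nu\bigl(f(I_0+S_0)+\CR_0-\CR_\infty\bigr)$. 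Your closing refinement is correct and worth keeping: under \eqref{6.16} the map $r\mapsto r\bigl(\chi_2(1-(r/\CR_\infty)^\theta)+\nu\bigr)$ is nondecreasing on $[0,\CR_\infty]$, so the weaker condition $f\left(I_0+S_0\right)+\CR_0\geq\CR_\infty$ already guarantees positivity of the denominator, which is sharper than either reading of \eqref{6.17}.
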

	
	\begin{mybox}{Compatibility of the model SI with the COVID-19 data for mainland China}
		The model SI is compatible with the data only when $\tau(t)$ stays positive for all $t \geq t_0$. From our estimation of the Chinese's COVID-19 data we obtain $\chi_2 \, \theta=0.14$. Therefore from \eqref{6.16} we deduce that model is compatible with the data only when 
		\begin{equation} \label{6.18}
			1/\nu \leq 1/0.14=3.3 \text{ days}.
		\end{equation}
		This means that the average duration of infectious period $1/\nu$ must be shorter than $3.3$ days. 
		
		\medskip 
		Similarly the condition  \eqref{6.17} implies 
		$$
		f \geq \dfrac{\CR_\infty \chi_2+\left( \CR_\infty- \CR_0\right) \nu  }{S_0+I_0} \geq \dfrac{\CR_\infty \chi_2+\left( \CR_\infty- \CR_0\right) \chi_2 \, \theta  }{S_0+I_0} 
		$$ 
		and since   we have $CR_0=198$ and $\CR_\infty=67102$, we obtain 
		\begin{equation} \label{6.19}
			f \geq \dfrac{67102 \times 0.66+ (67102-198)\times  0.14}{1.4\times 10^{9}} \geq 3.83 \times 10^{-5}.
		\end{equation}
		So according to this estimation the fraction of unreported $0<f \leq 1$ can be almost as small as we want.  
	\end{mybox}
	Figure \ref{Fig15} illustrates the  Proposition \ref{PROP6.4}. We observe that the formula for the rate of transmission \eqref{6.15} becomes negative  whenever $\nu < \chi_2 \theta$. 
	\begin{figure}[H]
		\centering
		\textbf{(a)} \\
		\includegraphics[scale=0.2]{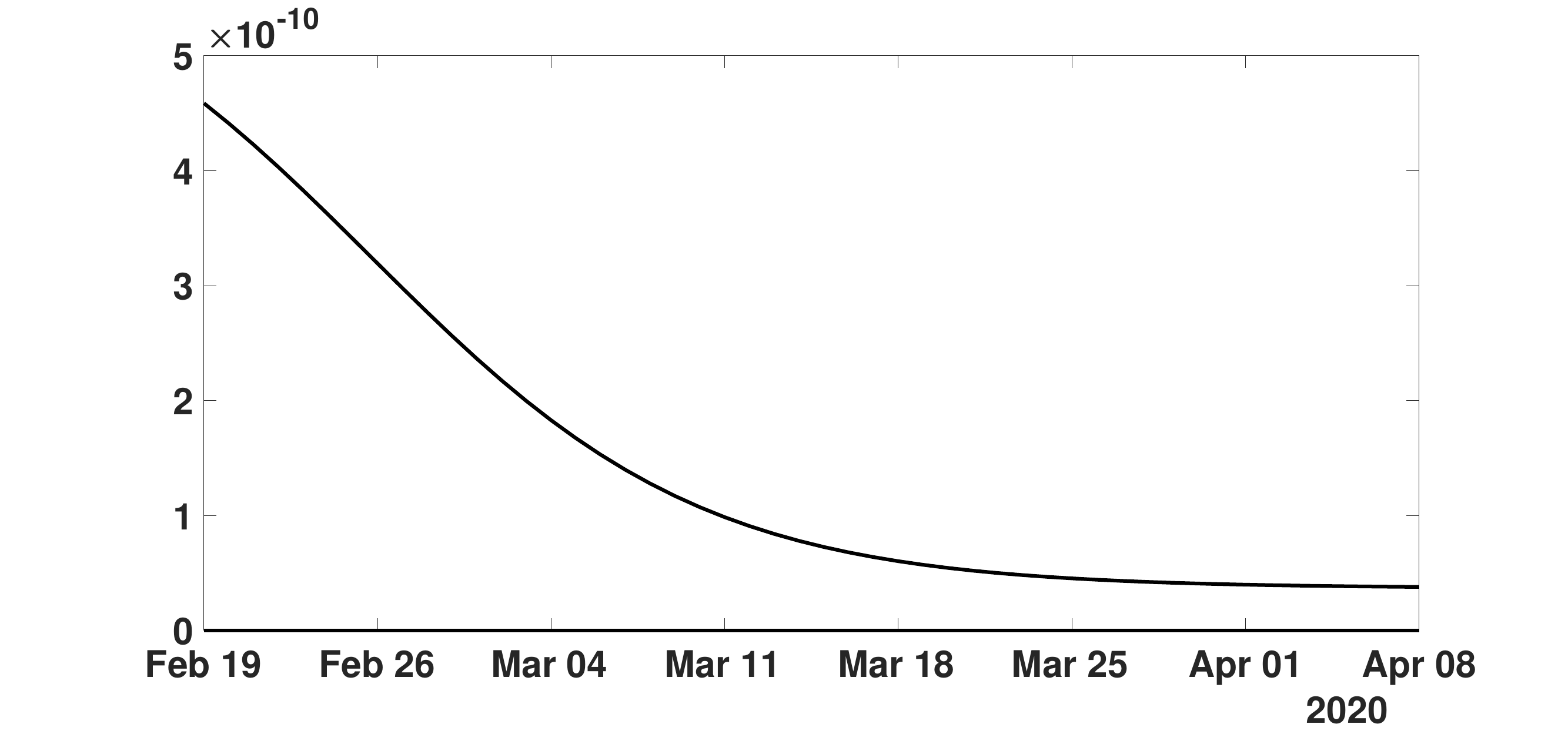}\\
		\textbf{(b)} \\
		\includegraphics[scale=0.2]{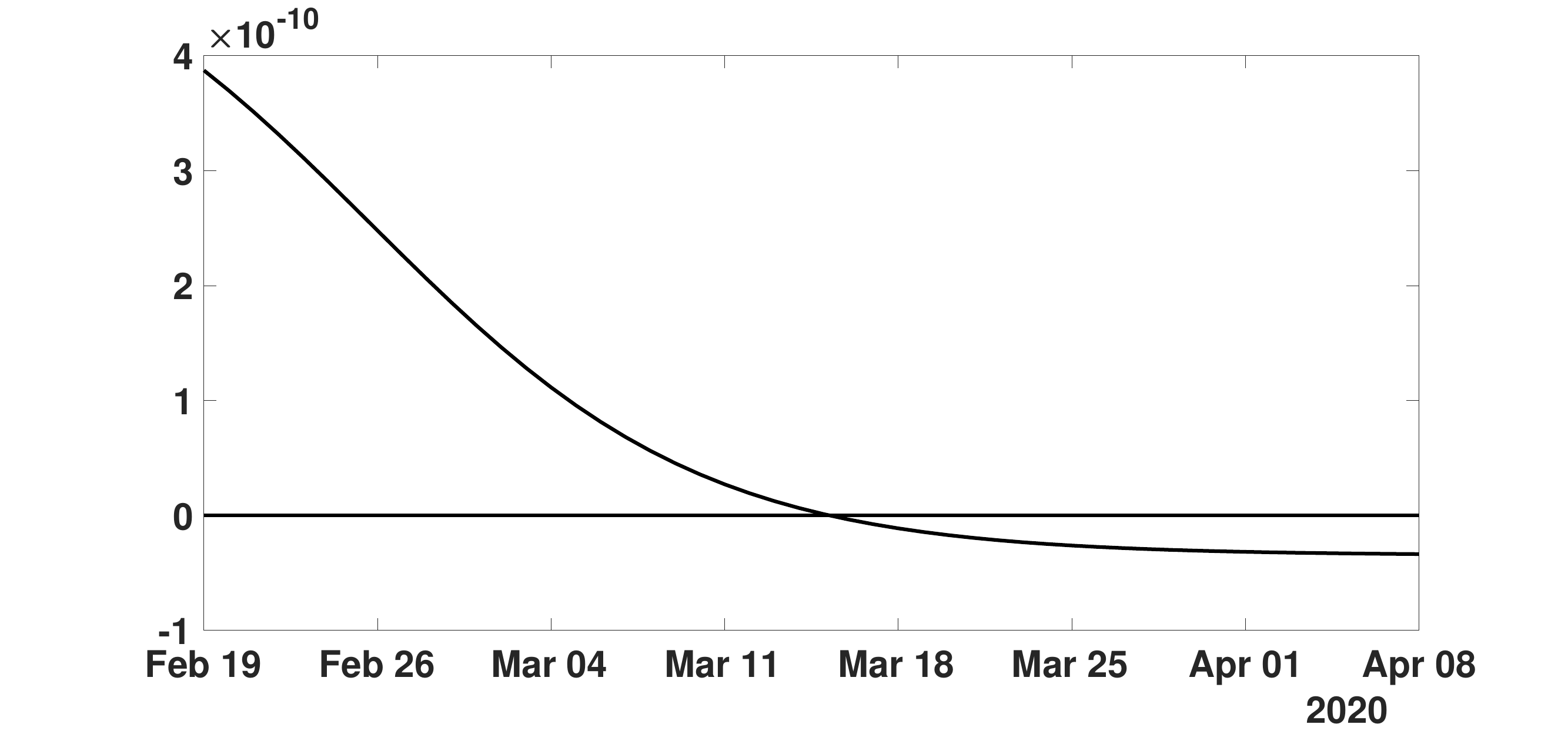}
		\caption{\textit{In this figure, we plot the rate of transmission obtained from formula \eqref{6.15} with $f=0.5$, $\chi_2 \, \theta=0.14<\nu=0.2$ (in Figure (a)) and $\nu=0.1<\chi_2 \, \theta=0.14$ (in Figure (b)),  $\chi_2=0.66$ and $\theta=0.22$ and $\CR_\infty=67102$ which is the latest value obtained from the cumulative number of reported cases for China.  }}\label{Fig15}
	\end{figure}
	In Figure \ref{Fig16} we plot the numerical simulation  obtained from \eqref{4.1}-\eqref{4.3} when $t \to \tau(t)$ is replaced by the explicit formula \eqref{6.15}. It is surprising that we can reproduce perfectly the original Bernoulli-Verhulst even when $\tau(t)$ becomes negative. This was not guaranteed at first, since the I-class of individuals is losing some individuals which are recovering. 
	\begin{figure}[H]
		\centering
		\includegraphics[scale=0.2]{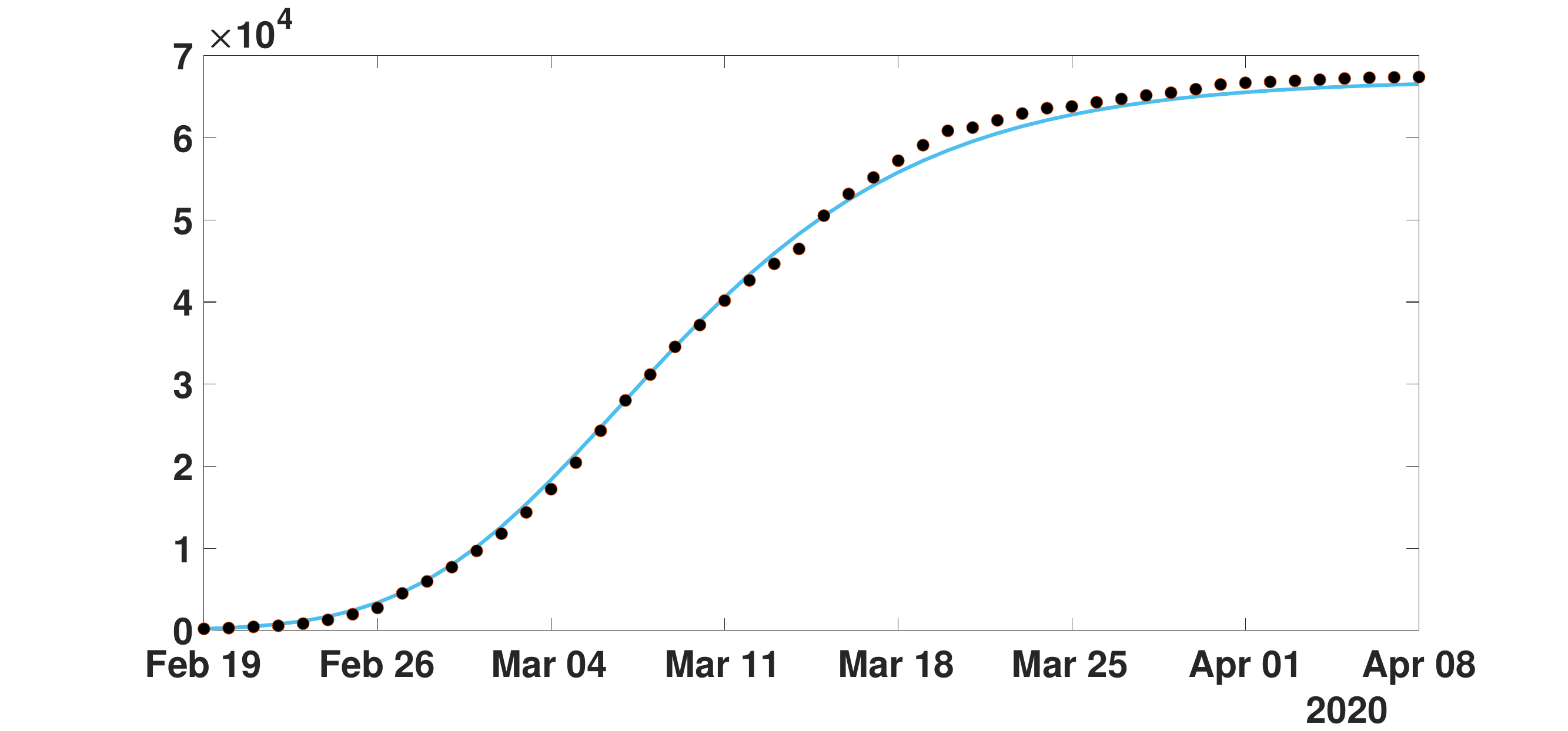}
		\caption{\textit{In this figure, we plot the number of reported cases by using model \eqref{4.1} and \eqref{6.1}, and the rate of transmission is obtained in \eqref{6.15}.  The parameters values are $f=0.5$, $\nu=0.1$ or $\nu=0.2$,  $\chi_2=0.66$ and $\theta=0.22$  and $\CR_\infty=67102$ is the latest value obtained from the cumulative number of reported cases for China. Furthermore, we use $S_0=1.4 \times 10^9$ for the total population of China and $I_0=954$ which is obtained from formula \eqref{6.13}. The black dots correspond to data for the cumulative number of reported cases observed and the blue curve corresponds to the model. }}\label{Fig16}
	\end{figure}
	
	\subsection{Results}
	\label{Section6.6}
	In \cite{Demongeot20b}, we designed an algorithm, based on the monotone property described in Theorem \ref{TH5.3} to recover the transmission rate from the data.  In this section, we reconsider the result presented in \cite{Demongeot20b} where several method was used to regularized the data. 
	
	In Figure \ref{Fig17} we plot several types of regularized cumulative data in figure (a) and several types of regularized daily data in figure (b). Among the different regularization methods, an important one is the Bernoulli-Verhulst best fit approximation. 
	\begin{figure}[H]
		\centering
		\textbf{(a)} \hspace{6cm} \textbf{(b)}\\
		\includegraphics[scale=0.13]{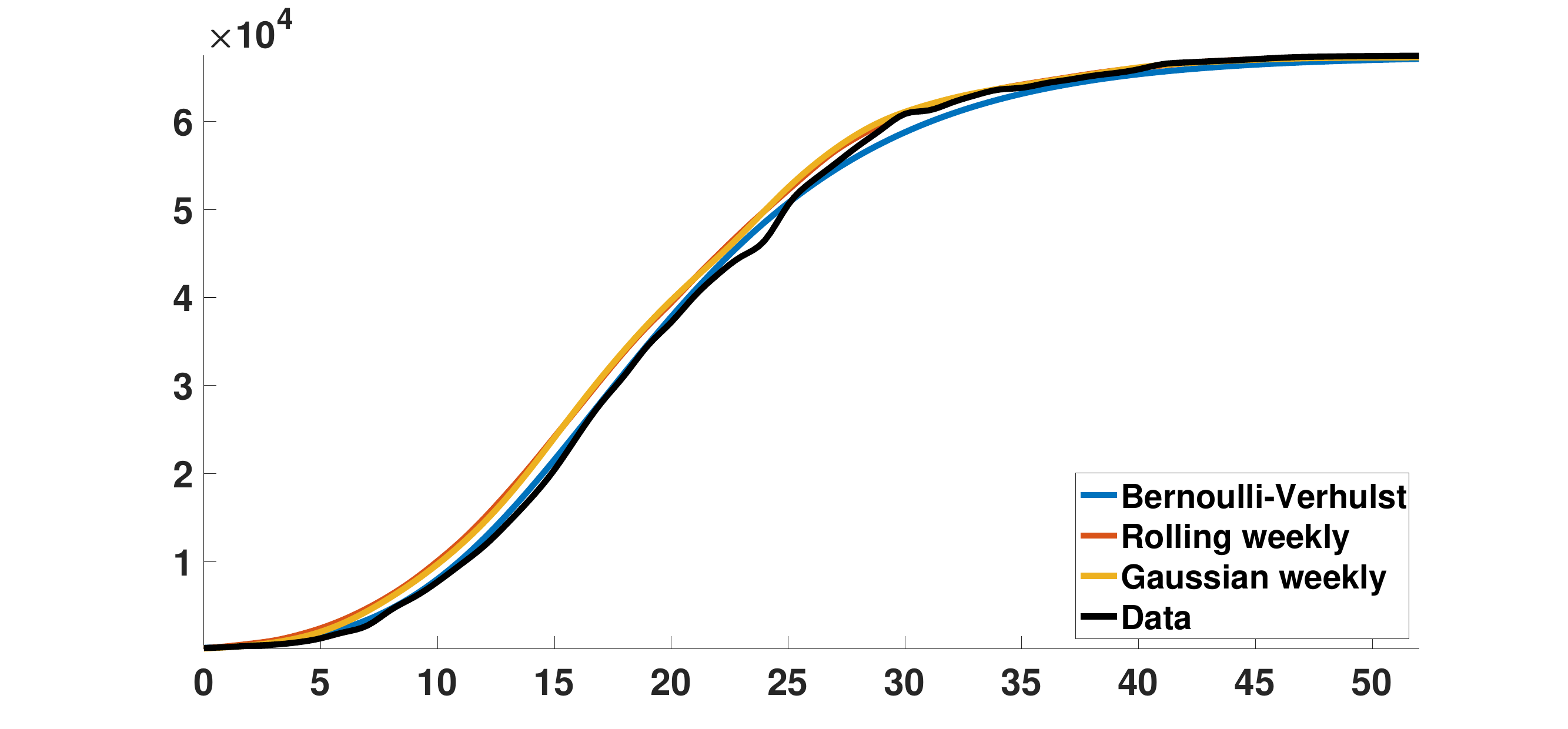}
		\includegraphics[scale=0.13]{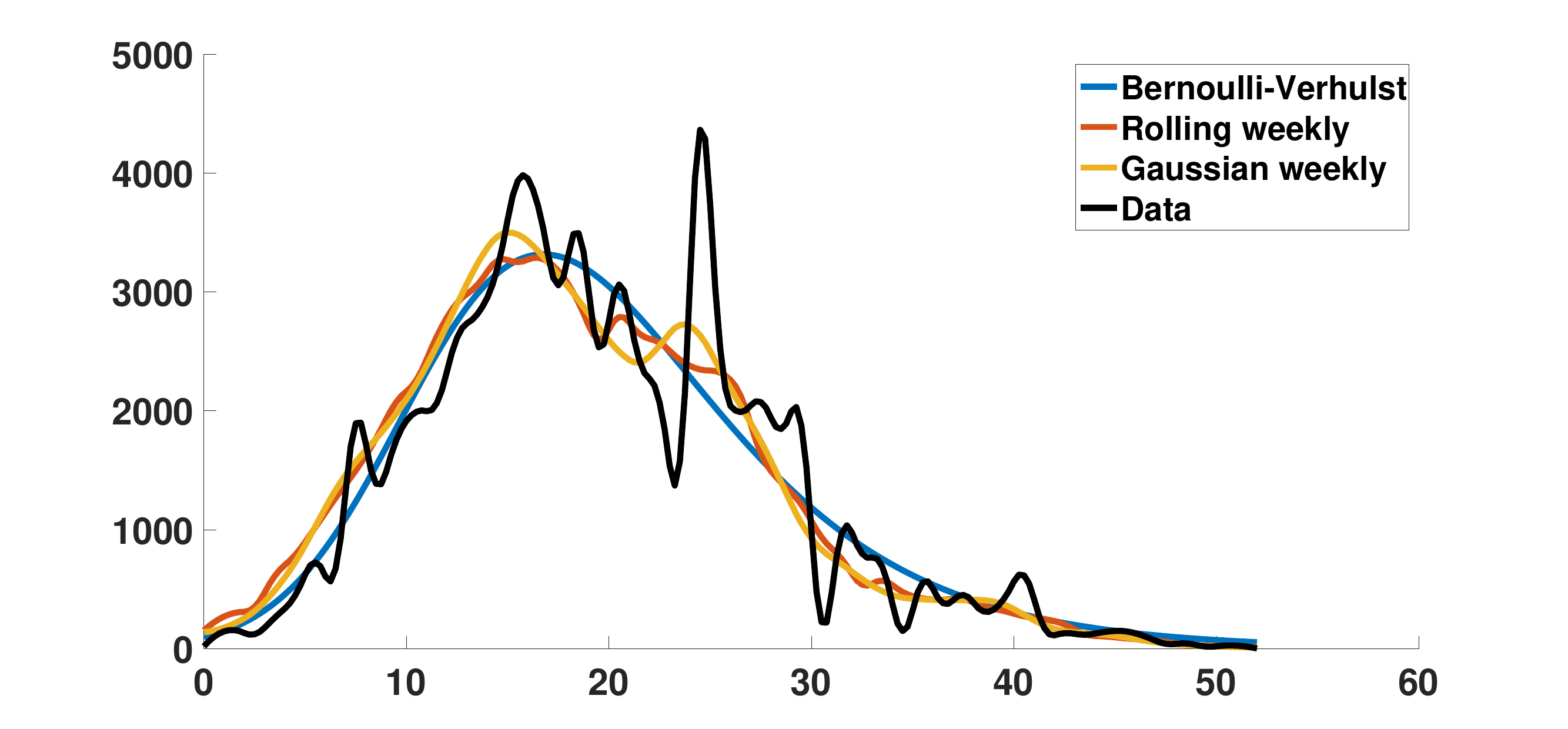}
		
		\caption{\textit{In this figure, we plot the cumulative number of reported cases (left) and the daily number of reported cases (right).  The black curves are obtained by applying the cubic spline matlab function "spline(Days,DATA)" to the cumulative data. The left-hand side is obtained by using the cubic spline function and right-hand side is obtained by using the derivative of the cubic spline interpolation. The blue curves are obtained by using cubic spline function to the day by day values of cumulative number of cases obtained from the best fit of the Bernoulli-Verhulst model. The orange curves are obtained by computing the rolling weekly  daily number of cases (we use the matlab function "smoothdata(DAILY,'movmean',7)") and then by applying the cubic spline function the corresponding cumulative number of cases. The yellow curves are obtained by Gaussian the rolling weekly to the daily number of cases (we use the matlab function "smoothdata(DAILY,'gaussian',7)") and then by applying the cubic spline function to the corresponding cumulative number of cases.  }}\label{Fig17}
	\end{figure}
	In Figure   \ref{Fig18} we plot the rate of transmission $t \to \tau(t)$ obtained by using Algorithm 2.  We can see that the original data gives a negative transmission rate while at the other extreme the Bernoulli-Verhulst seems to give the most regularized transmission rate. In Figure   \ref{Fig18}-(a) we observe that we now recover almost perfectly the theoretical transmission rate obtained in \eqref{6.15}. In Figure   \ref{Fig18}-(b) the rolling weekly average regularization and in Figure   \ref{Fig18}-(c) the Gaussian weekly average  regularization still vary a lot and in both cases  the transmission rate becomes negative after some time. In Figure   \ref{Fig18}-(c) the original data gives a transmission rate that is negative from the beginning. We conclude that it is crucial to find a "good" regularization of the daily number of case. So far the best regularization method is obtained by using the best fit of the Bernoulli-Verhulst model.


	\begin{figure}[H]
		\centering
		\textbf{(a)} \hspace{6cm} \textbf{(b)}\\
		
		\includegraphics[scale=0.13]{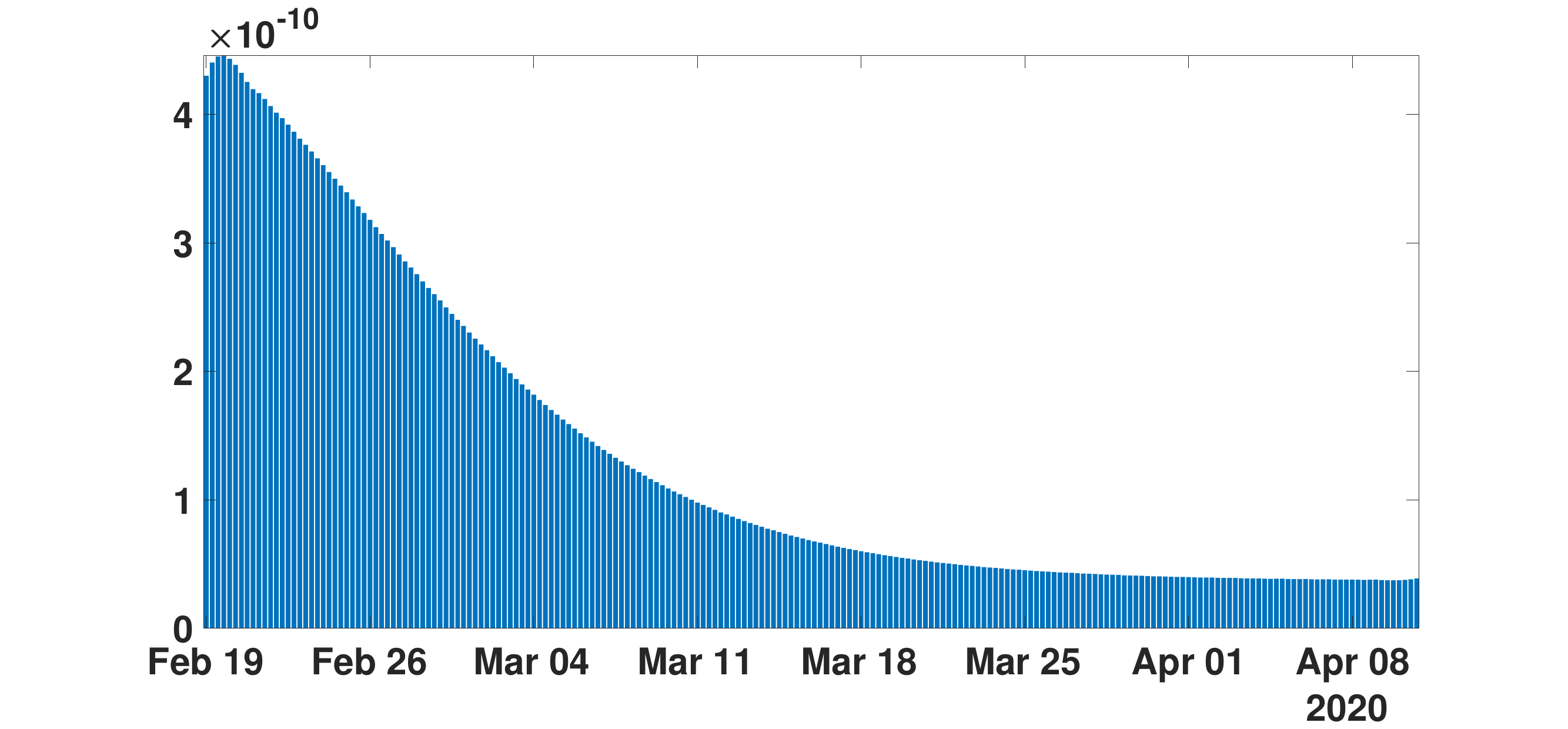}
		\includegraphics[scale=0.13]{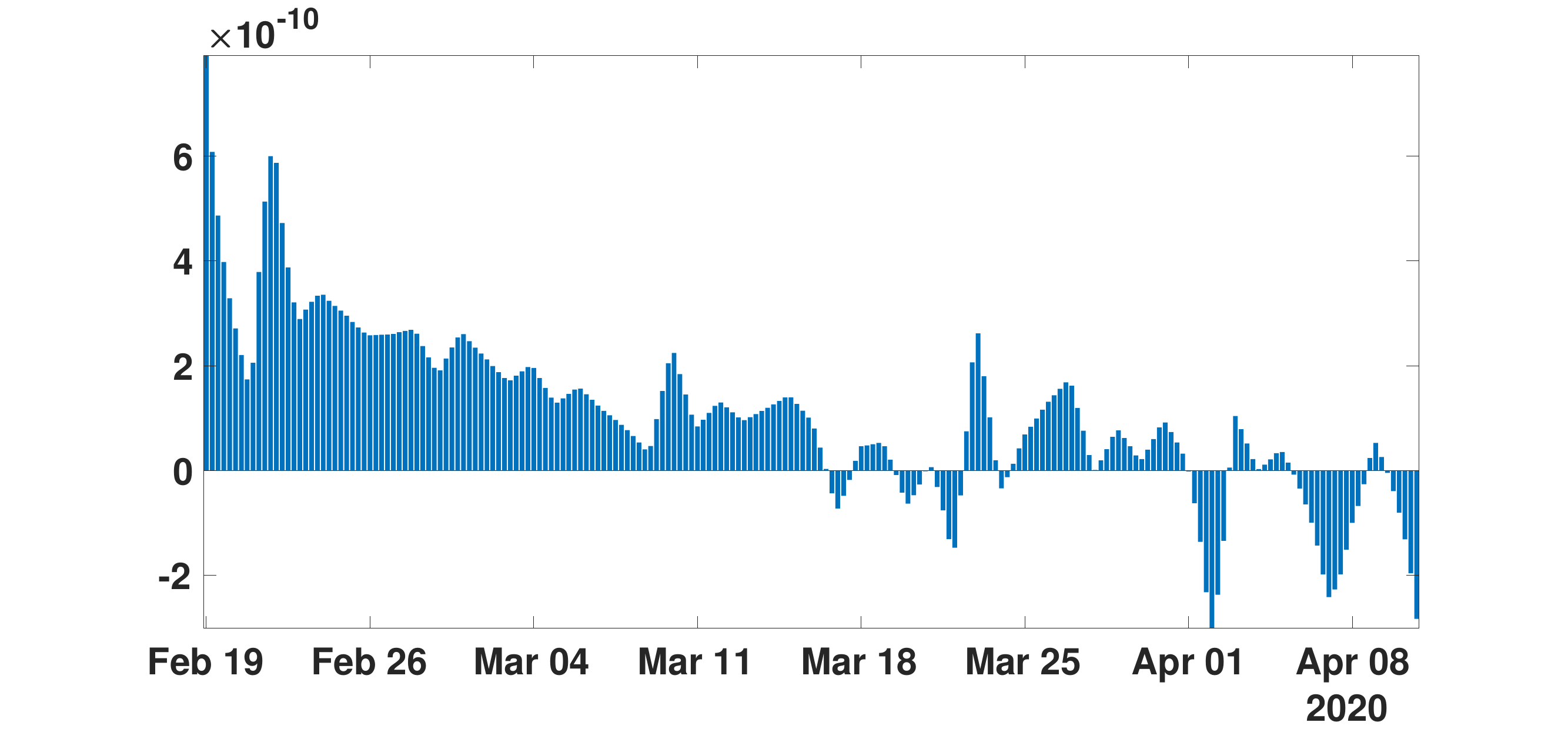}\\
		\textbf{(c)} \hspace{6cm} \textbf{(d)}\\
		\includegraphics[scale=0.13]{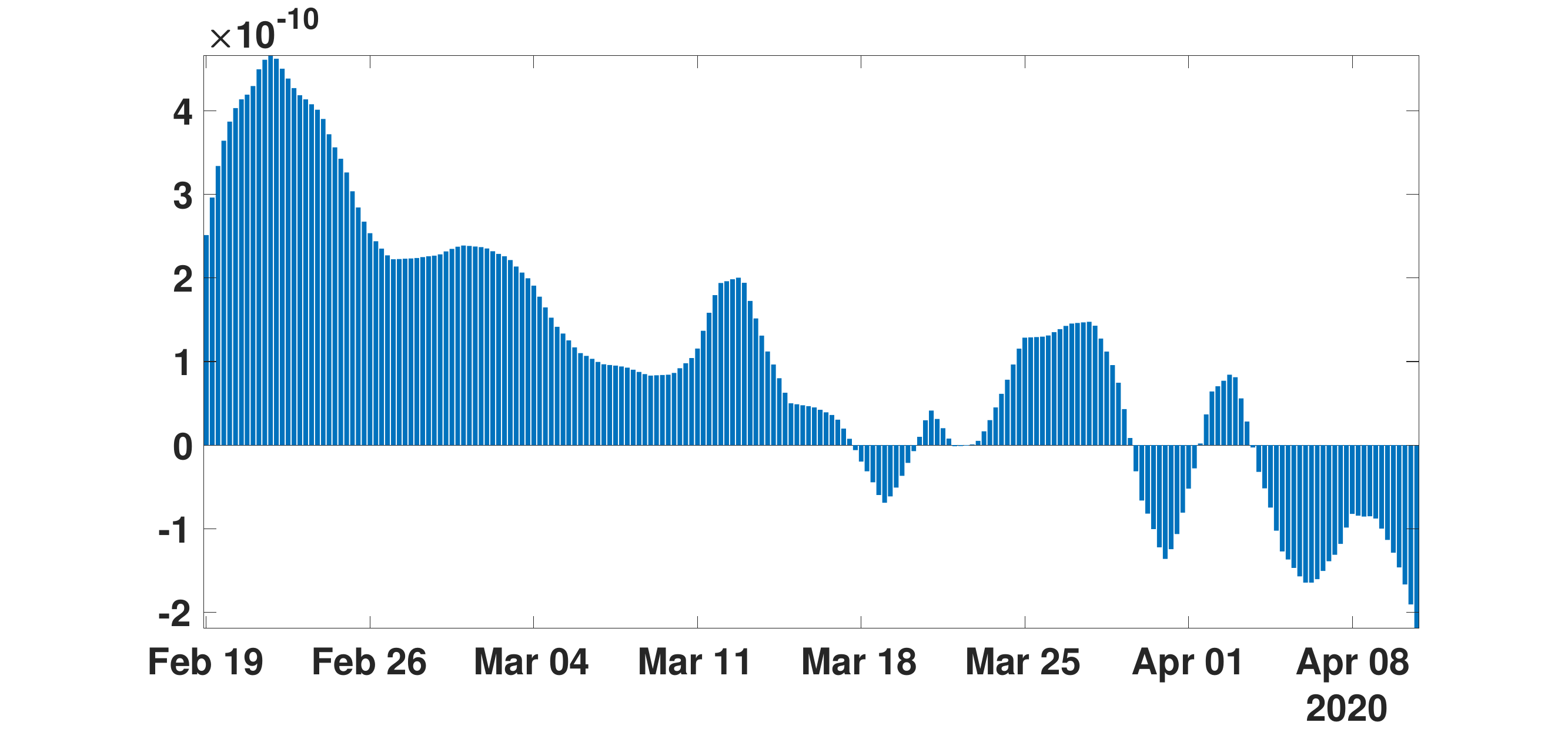}
		\includegraphics[scale=0.13]{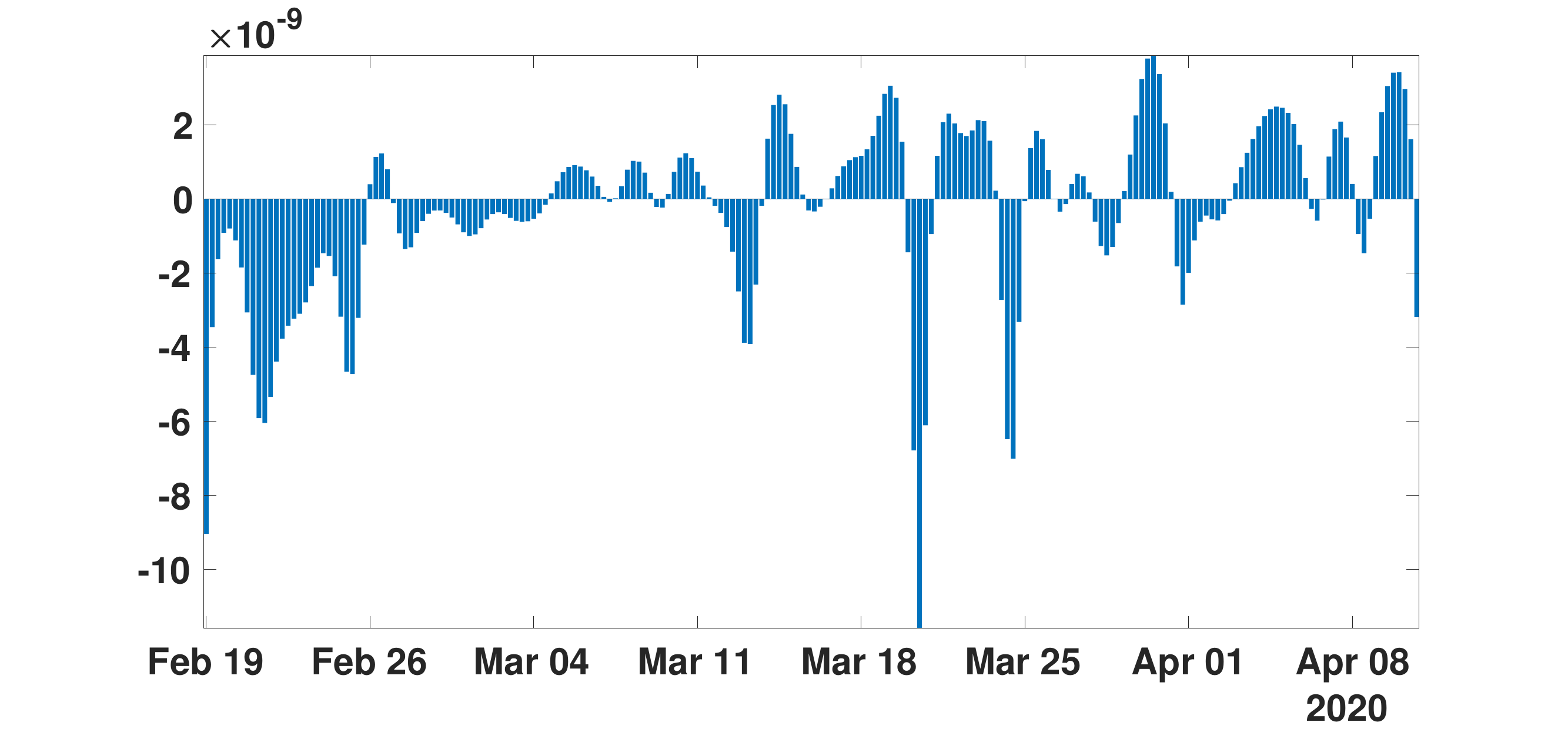}\\

		\caption{\textit{In this figure we plot the transmission rates $t \to \tau(t)$ obtained by using Algorithm 2 with the parameters $f=0.5$ and $\nu=0.2$.   In figure (a) we use the cumulative data obtained by using the Bernoulli-Verhulst regularization. In figure (b) we use the cumulative data obtained by using the rolling weekly average regularization.  In figure (c) we use the cumulative data obtained by using  the Gaussian weekly average regularization.  In figure (d) we use the original cumulative data.    }}\label{Fig18}
	\end{figure}

	\section{Modeling multiple epidemic waves}
	\label{Section7}
	\subsection{Phenomenological model used for multiple epidemic waves}
	\label{Section7.1}
	\medskip
	\noindent \textbf{Endemic phase:}\index{endemic phase} During the endemic phase, the dynamics of new cases appears to fluctuate around an average value independently of the number of cases. Therefore the average cumulative number of  cases is given by
	\begin{equation} \label{7.1}
		\tcbhighmath[boxrule=2pt,drop fuzzy shadow=blue]{ 	\CR(t)=N_0+ (t-t_{0}) \times a, \text{ for } t \in [t_0 , t_1],}
	\end{equation}
	where $t_0$ {denotes} the beginning of the endemic phase, $N_0$ is the number of new cases at time $t_0$, and $a$ is the average value of the daily number of new cases.

	\medskip 
	\noindent \textbf{Epidemic phase:}\index{endemic phase} In the epidemic phase, the new cases  are contributing to produce  secondary cases. Therefore the daily number of new cases is no longer constant, but varies with time as follows
	\begin{equation}\label{7.2}
		\tcbhighmath[boxrule=2pt,drop fuzzy shadow=blue]{ 	\CR(t)=	N_{\rm{base}}+ \dfrac{\mathrm{e}^{\chi (t-t_0)} N_0 }{\left[  1+ \dfrac{ N_0^\theta}{N_\infty^\theta}  \left(\mathrm{e}^{\chi \theta   \left(t- t_0 \right)  } -1 \right) \right]^{ 1/\theta} }, \text{ for } t \in [t_0 , t_1].}
	\end{equation}
	In other words, the daily number of {new} cases  follows the Bernoulli--Verhulst equation. That is, 
	\begin{equation}  \label{7.3}
		\tcbhighmath[boxrule=2pt,drop fuzzy shadow=blue]{ N(t)=\CR(t)-N_{\rm{base}},}
	\end{equation} we obtain
	\begin{equation} \label{7.4}
		N'(t)= \chi \,  N(t) \, \left[ 1 - \left( \dfrac{N(t) }{N_\infty}\right)^\theta \right],
	\end{equation}
	completed with the initial value
	\begin{equation*}
		N(t_0)=N_0.
	\end{equation*}
	In the model, $N_{\rm{base}}+N_0$  corresponds to the value $\CR(t_0)$ of the cumulative number of cases at time $t=t_0$. The parameter $N_\infty+N_{\rm{base}}$ is the maximal value of the cumulative reported {cases} after the time $t=t_0$.   $\chi>0$ is a Malthusian growth  parameter, and $\theta$ {regulates} the speed at which $\CR(t)$ {increases} to $N_\infty+N_{\rm{base}}$.

	\medskip
	\noindent \textbf{Regularize the junction between the epidemic phases:} \index{regularized model} Because the formula for $\tau(t)$ involves derivatives of the phenomenological model {regularizing} $\CR(t)$ (see equations \eqref{5.5}), we need to connect the phenomenological models {of} the different phases (epidemic and endemic) as smoothly as possible. Let $t_0, \ldots, t_n$ denote the $n+1$ breaking points of the model, that is, the times at which there is a transition between one phase and the next one. We let $\widetilde{\CR}(t)$ be the global model obtained by placing the phenomenological models of the different phases side by side.
	
	More precisely, $\widetilde \CR(t) $ is defined by \eqref{7.2} during an epidemic phase $[t_i, t_{i+1}]$, or during the initial phase $(-\infty, t_0]$ or the last phase $[t_n, +\infty)$. During an endemic phase, $\widetilde \CR(t)$ is defined by \eqref{7.1}. The parameters are chosen so that the resulting global model $\widetilde\CR$ is continuous.   We define the regularized model by using the convolution formula:
	\begin{equation}\label{7.5}
		\CR(t) = \int_{-\infty}^{+\infty } \mathcal G(t-s ) \times \widetilde{\CR}(s)\d s = ( \mathcal G \ast \widetilde{\CR} )(t),
	\end{equation}
	where
	$$
	\mathcal G(t):= \frac{1}{\sigma\sqrt{2\pi}}\mathrm{e}^{-\frac{t^2}{2\sigma^2}}
	$$
	is the Gaussian function with mean 0 and variance $\sigma^2$. The parameter $\sigma$ controls the trade-off between smoothness and precision: increasing $\sigma$ reduces the variations in $\CR(t)$ and reducing $\sigma $ reduces the distance between $\CR(t)$ and $\widetilde{\CR}(t)$.   In any case the resulting function $\CR(t)$ is very smooth (as well as its derivatives) and close to the original model $\widetilde{\CR}(t)$ when $\sigma$ is not too large. Here, we fix 
	$$
	\sigma = 7 \text{ days.}
	$$
	Numerically, we will need to compute some $t \to \CR(t)$ derivatives. Therefore it is convenient to take advantage of  the convolution \eqref{7.5} and deduce that
	\begin{equation}\label{7.6}
		\dfrac{\d^n \CR(t)}{\d t^n}	 = \int_{-\infty}^{+\infty } \dfrac{\d^n  \mathcal G(t-s )  }{\d t^n} \times \widetilde{\CR}(s)\d s ,
	\end{equation}
	for $n=1,2, 3$.
	
	\subsection{Phenomenological Model apply to France}
	\label{Section7.2}
	Figures \ref{Fig18b}-\ref{Fig18c} below is taken from	\cite{griette2021robust}.  In Figure \ref{Fig18b}, we present the best fit of our phenomenological model for the cumulative reported case data of COVID-19 epidemic in France. The yellow regions correspond to the endemic phases and the blue regions correspond to the epidemic phases. Here we consider the two epidemic waves for France, and the chosen period, as well as the parameters values for each period.

	\begin{figure}[H]
		\begin{center}
			\includegraphics[scale=0.2]{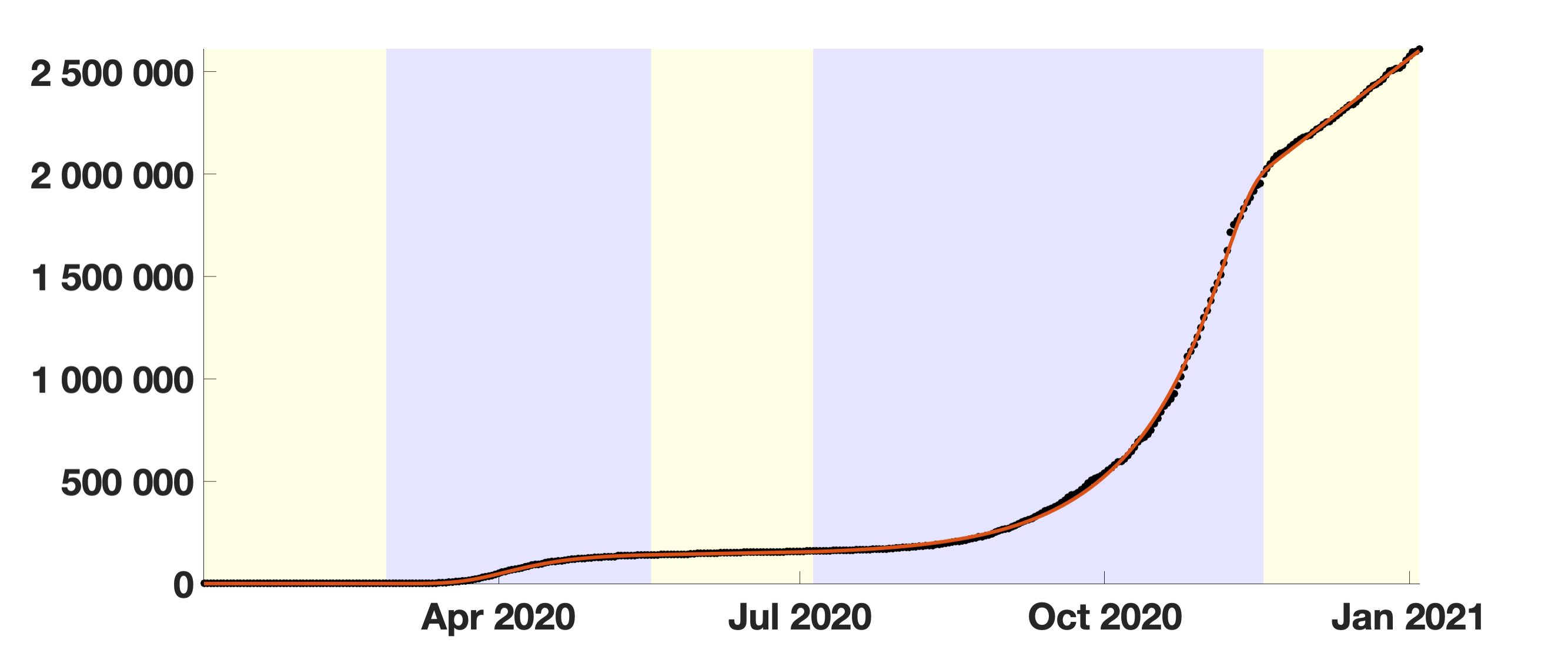} 
		\end{center}
		\caption{\textit{The red curve corresponds to the phenomenological model and the black dots
				correspond to the cumulative number of reported cases in France.}}
		\label{Fig18b}
	\end{figure}
	
	Figure \ref{Fig18c} shows the corresponding daily number of new reported cases data (black dots) and the first
	derivative of our phenomenological model (red curve). 
	\begin{figure}[H]
		\begin{center}
			\includegraphics[scale=0.2]{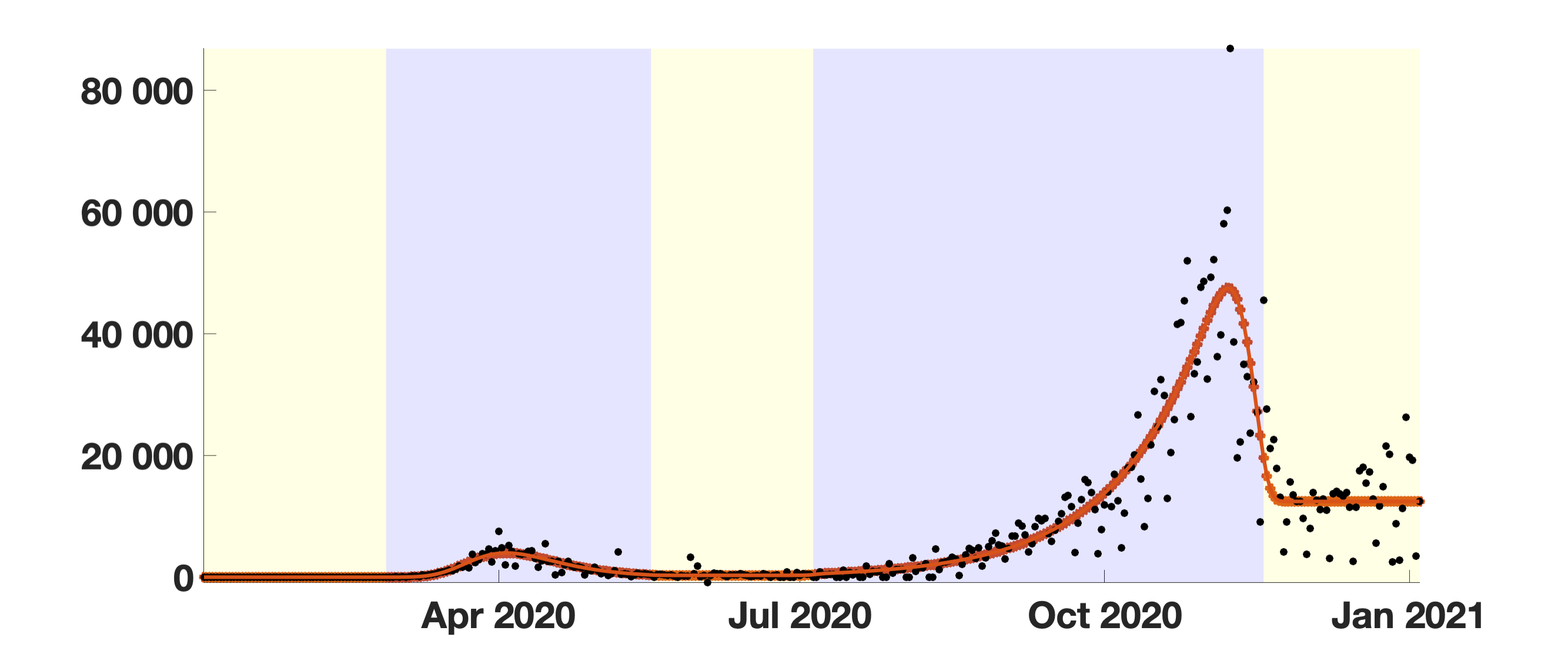} 
		\end{center}
		\caption{\textit{The red curve corresponds to the phenomenological model and the black dots
				correspond to the cumulative number of reported cases in France.}}
		\label{Fig18c}
	\end{figure}
	
	\subsection{Phenomenological Model apply to several countries}
	\label{Section7.3}

	Our method to regularize the data was applied to the eight geographic areas. The resulting curves are presented in Figure \ref{Fig19}. The blue background color regions correspond to epidemic phases and the yellow background color regions to endemic phases. We added a plot of the daily number of cases (black dots) and the derivative of the regularized model for comparison, even though the daily number of cases is not used in the fitting procedure. The figures generally show an excellent agreement between the time series of reported cases (top row, black dots) and the regularized model (top row, blue curve). The match between the daily number of cases (bottom row, black dots) and the derivative of the regularized model (bottom row, blue curve) is also excellent, even though it is not a part of the optimization process. Of course, we lose some information, like the extreme values (“peaks”) of the daily number of cases. This is because we focus on an averaged value of the number of cases. More information could be retrieved by statistically studying the variation around the phenomenological model. However, we leave such a study for future work. The relative error between the regularized curve and the data may be relatively high at the beginning of the epidemic because of the stochastic nature of the infection process and the small number of infected individuals but quickly drops below $1\%$ (see the supplementary material in 	\cite{Griette22} for more details).
	\begin{figure}[H]
		\centering
		\includegraphics[width=\textwidth]{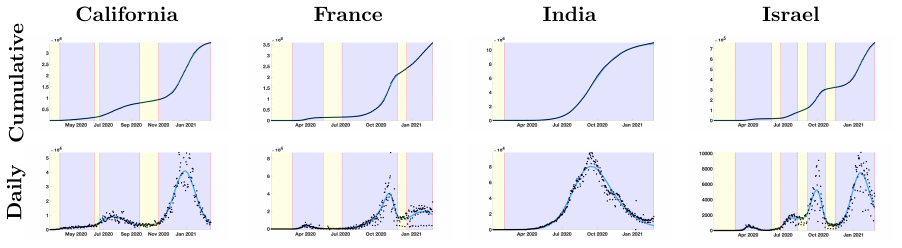}
		\includegraphics[width=\textwidth]{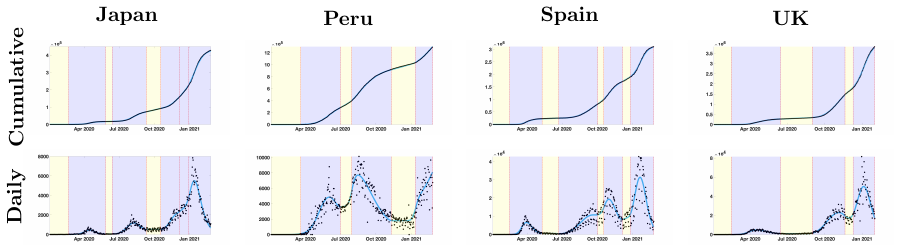}
		\caption{\textit{In the top rows, we plot the cumulative number of reported cases (black dots) and
				the best fit of the phenomenological model (blue curve). In the bottom rows, we plot the daily
				number of reported cases (black dots) and the first derivative of the phenomenological model
				(blue curve). This figure is taken from \cite{Griette22}.}}
		\label{Fig19}
	\end{figure}

	\subsection{Earlier results about transmission rate reconstructed from the data}
	\label{Section7.4}
	
	This problem has already been considered in several articles. In the early 70s, London and Yorke \cite{London73, Yorke73} discussed the time dependent rate of transmission in the context of measles, chickenpox and mumps. Motivated by applications to the data for COVID-19 in \cite{bakhta2020epidemiological} the authors also obtained some new results about reconstructing the rate of transmission. 
	\subsection{Instantaneous reproduction number}
	\label{Section7.5}
	We use the formula \eqref{5.5} to compute the transmission rate, and we  consider the {\bf instantaneous reproduction number} 
	\begin{equation*} \label{15.9}
		{\large \mathbf{	R_e(t) = \tau(t)S(t)/\nu,}}
	\end{equation*}
	and the {\bf quasi-instantaneous reproduction number} 
	\begin{equation*}  \label{15.10}
		{\large \mathbf{	R_e^0(t) = \tau(t)S_0/\nu, }}	
	\end{equation*}
	We compare the above indicators with ${\large \mathbf{R^C_e(t)}}$ the classical notion of {\bf instantaneous reproduction number} \cite{Obadia12, Cori13}.
	
	\begin{remark} The standard method to compute ${\large \mathbf{R^C_e(t)}}$  (see  \cite{Obadia12, Cori13}) proposes another form of regularization of the data, which consists of computing the instant of contamination backward in time. 
		This instant is random and follows a standard exponential law. 
	\end{remark} 
	\subsection{Results}
	\label{Section7.6}
	In Figure \ref{Fig20}, our analysis allows us to compute the transmission rate $\tau(t)$. We use this transmission rate to calculate two different indicators of the epidemiological dynamics for each geographic area,  the instantaneous reproduction number and the quasi-instantaneous reproduction number. Both coincide with the basic reproduction number $R_0$ on the first day of the epidemic. The instantaneous reproduction number at time $t$, $R_e(t)$, is the basic reproduction number corresponding to an epidemic starting at time $t$ with a constant transmission rate equal to $\tau(t)$ and with an initial population of susceptibles composed of $S(t)$ individuals (the number of susceptible individuals remaining in the population). The quasi-instantaneous reproduction number at time $t$, $R^0_e(t)$, is the basic reproduction number corresponding to an epidemic starting at time $t$ with a constant transmission rate equal to $\tau(t)$ and with an initial population of susceptibles composed of $S_0$ individuals (the number of susceptible individuals at the start of the epidemic). The two indicators are represented for each geographic area in the top row of Figure \ref{Fig20}  (black curve: instantaneous reproduction number; green curve: quasi-instantaneous reproduction number).

	\medskip 
	One interpretation for $R_e(t)$ and another for $R^0_e(t)$. The instantaneous reproduction number indicates if, given the current state of the population, the epidemic tends to persist or die out in the long term (note that our model assumes that recovered individuals are perfectly immunized). The quasi-instantaneous reproduction number indicates if the epidemic tends to persist or die out in the long term, provided the number of susceptible is the total population. In other words, we forget about the immunity already obtained by recovered individuals. Also, it is directly proportional to the transmission rate and therefore allows monitoring of its changes. Note that the value of $R_e^0(t)$ changed drastically between epidemic phases, revealing that $\tau(t)$ is far from constant.  In any case, the difference between the two values starts to be visible in the figures one year after the start of the epidemic.
	
	\medskip 
	We also computed the reproduction number using the method described in \cite{Cori13}, which we denoted $R_e^c(t)$. The precise implementation is described in the supplementary material in \cite{Griette22}. It is plotted in the bottom row of Figure \ref{Fig20}  (green curve), along with the instantaneous reproduction number $R_e(t)$ (green curve). 
	
	\begin{remark}
		In the bottom of Figure \ref{Fig20}, we compare the instantaneous reproduction numbers obtained by our method in black and the classical method in \cite{Cori13} in green. We observe that the two approaches are not the same at the beginning. This is because the method of \cite{Cori13} does not consider the initial values  $I_0$ and $E_0$ while we do. Indeed the method of \cite{Cori13} assumes that $I_0$ and $E_0$ are close to $0$ at the beginning when it is viewed as a Volterra equation reformulation of the Bernoulli--Kermack--McKendrick model with the age of infection. On the other hand, our method does not require such an assumption since it provides a way to compute the initial states $I_0$ and $E_0$.
	\end{remark}
	
	\begin{figure}[H]
		\centering
		\includegraphics[scale=0.8]{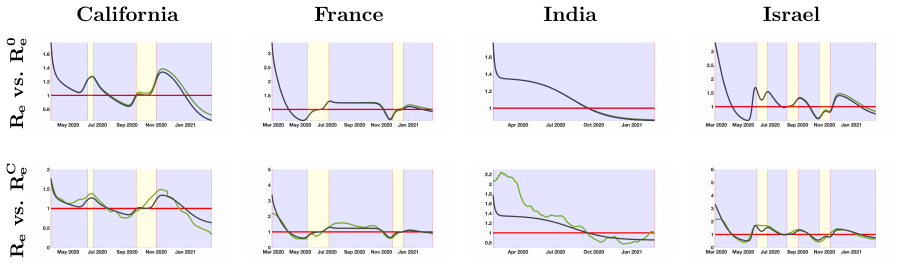}
		\includegraphics[scale=0.8]{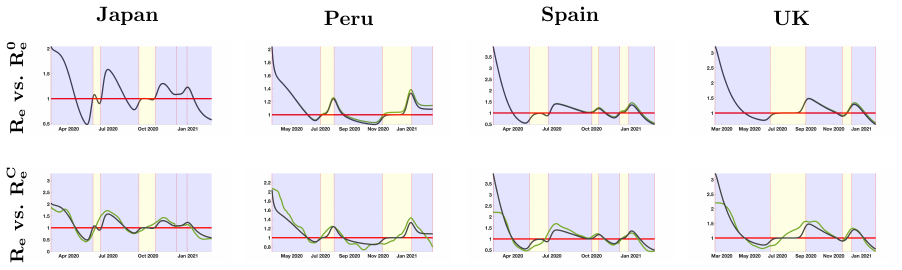}
		\caption{\textit{In the top rows, we plot the instantaneous reproduction number $R_e(t)$ (in black)
				and the quasi instantaneous reproduction number $R^0_e (t)$ (in green). In the bottom rows, we
				plot the instantaneous reproduction number $R_e(t)$ (in black) and the one obtained by the
				standard method \cite{Obadia12, Cori13} $R^C_e(t)$ (in green). This figure is taken from \cite{Griette22}.}}
		\label{Fig20}
	\end{figure}

	It is essential to ``regularize'' the data to obtain a comprehensive outcome from SIR epidemic models. In general, the rate of transmission in the SIR model (applying identification methods) is not very noisy and meaningless. For example, at the beginning of the first epidemic wave, the transmission rate should be decreasing since peoples tend to have less and less contact while to epidemic growth.  The standard regularization methods (like, for example, the rolling weekly average method) have been tested for COVID-19 data in  \cite{Demongeot20b}.  The outcome in terms of transmission rate is very noisy and even negative transmission (which is impossible). Regularizing the data is not an easy task, and the method used is very important in order to obtain a meaningful outcome for the models. Here, we tried several approaches to link an epidemic phase to the next endemic phase. So far, this regularization procedure is the best one.
	
	\medskip 
	Figure \ref{Fig21} illustrates why we need a phenomenological model to regularize the data. On the left-hand side, we observe the $\tau(t)$ becomes negative almost immediately. Therefore, without regularization, the fit may not make sense. 
	\begin{figure}[H]
		\begin{center}
			\textbf{(a)}\hspace{6cm}\textbf{(b)}\\
			\hspace{-0.5cm}	\includegraphics[scale=0.13]{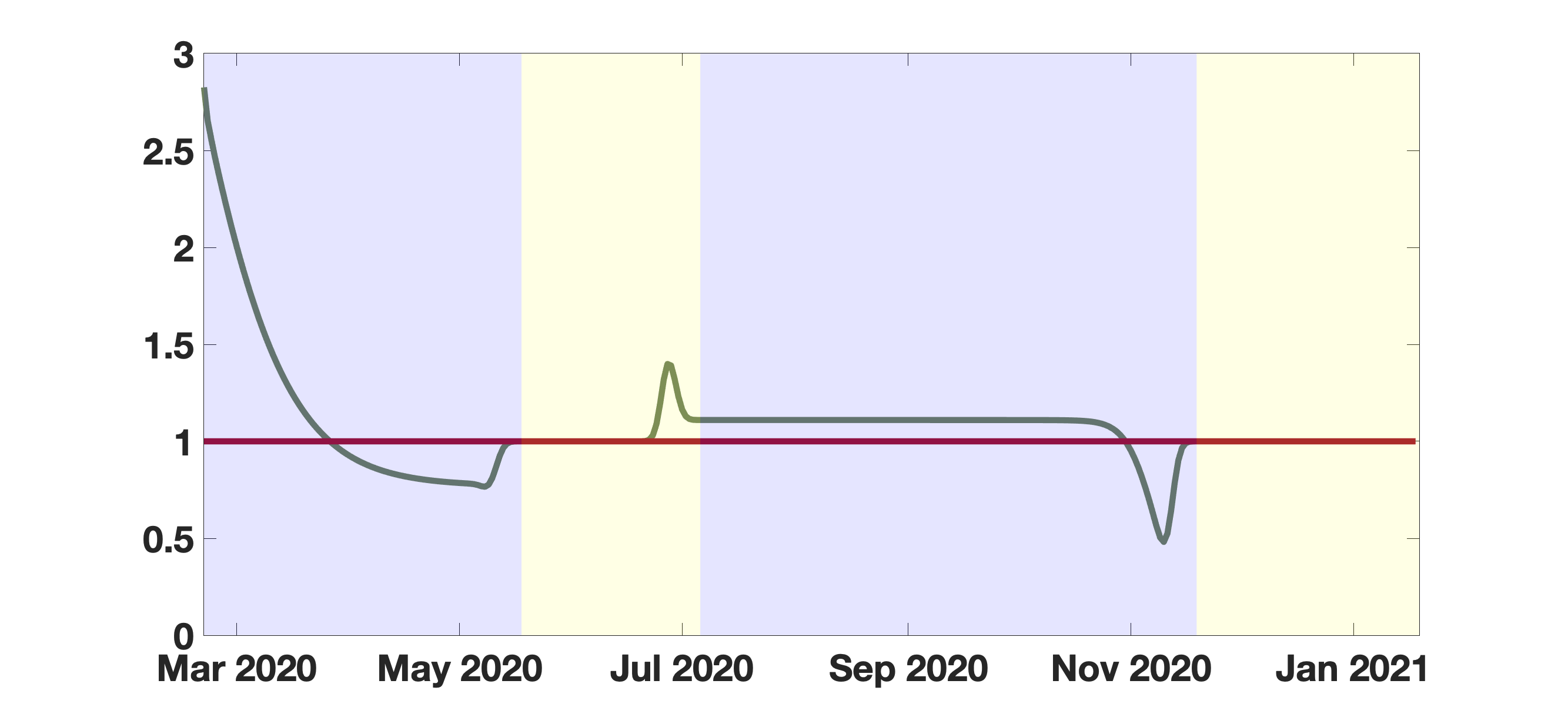} \hspace{0.3cm}
			\includegraphics[scale=0.13]{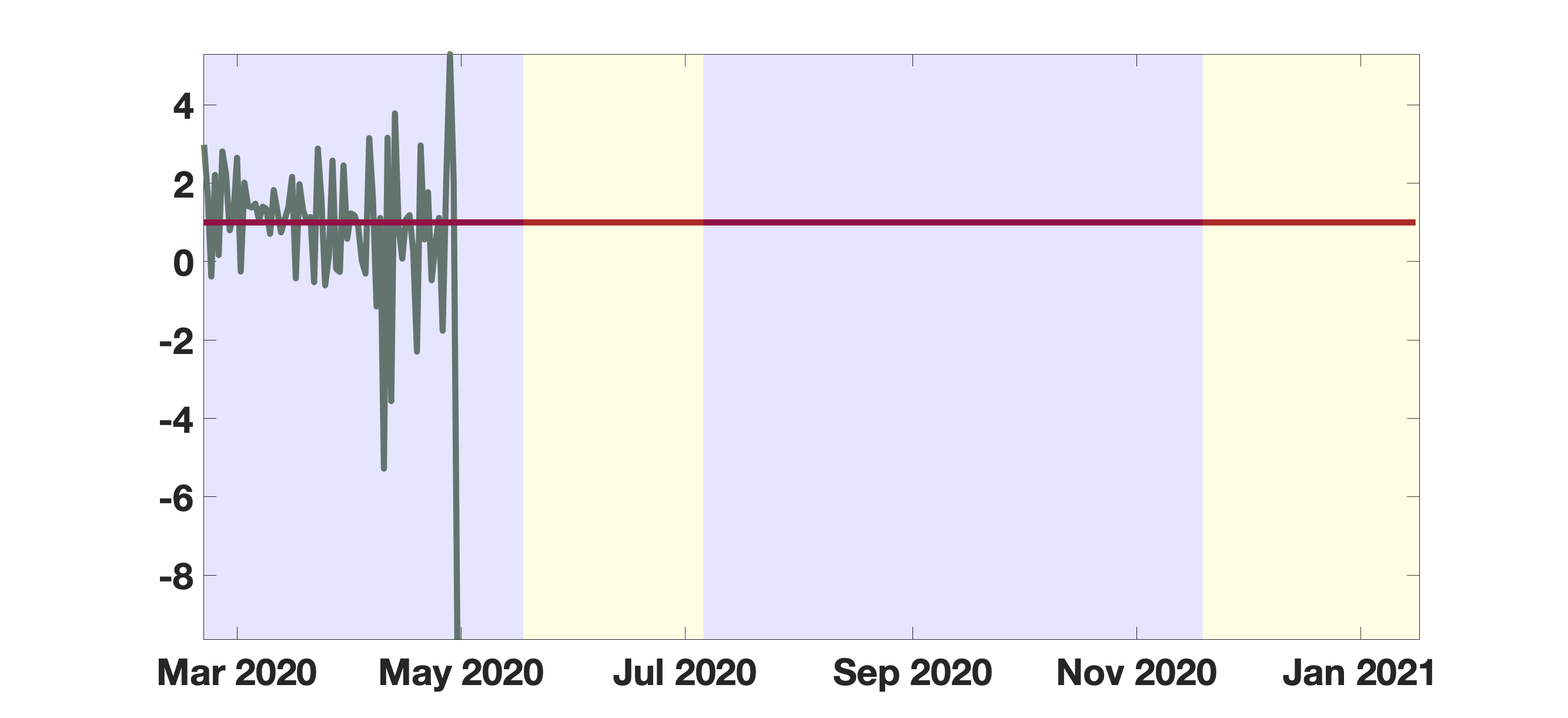} 
		\end{center}
		\caption{\textit{ In this figure, we plot the instantaneous $R_0$. On the left-hand side, we use our smoothing method (with Bernoulli-Verhulst model (endemic) line (endemic) together with a convolution with a Gaussian). On the right-hand side, we use the original cumulative data and our algorithm to fit the cumulative number of cases.}}
		\label{Fig21}
	\end{figure}

	\subsection{Consequences of the results}
	\label{Section7.7}
	In Figure \ref{Fig20}, we saw that the population of susceptible patients is almost unchanged after the epidemic passed. Therefore, the system behaves almost like the non-autonomous system
	$$
	\tcbhighmath[remember as=fx, boxrule=2pt,drop fuzzy shadow=blue]{ I'(t)=\tau(t) S_0 I(t)-\nu I(t), \forall t \geq t_0, \text{ and } I(t_0)=I_0, }
	$$	 
	This means that $I(t) $ depends  linearly on $I_0$. That is, if we multiply $I_0$ by some number, the result $I(t)$ will be multiply by the same number.  
	
	Figure \ref{Fig22} shows two things. The initial number of infected is crucial when we try to predict the number of infected. 
	The average daily number of cases during the endemic phases have strong impact on the amplitude of the next epidemic waves \cite{griette2021robust}. 
	\begin{figure}[H]
		\centering
		\includegraphics[scale=0.2]{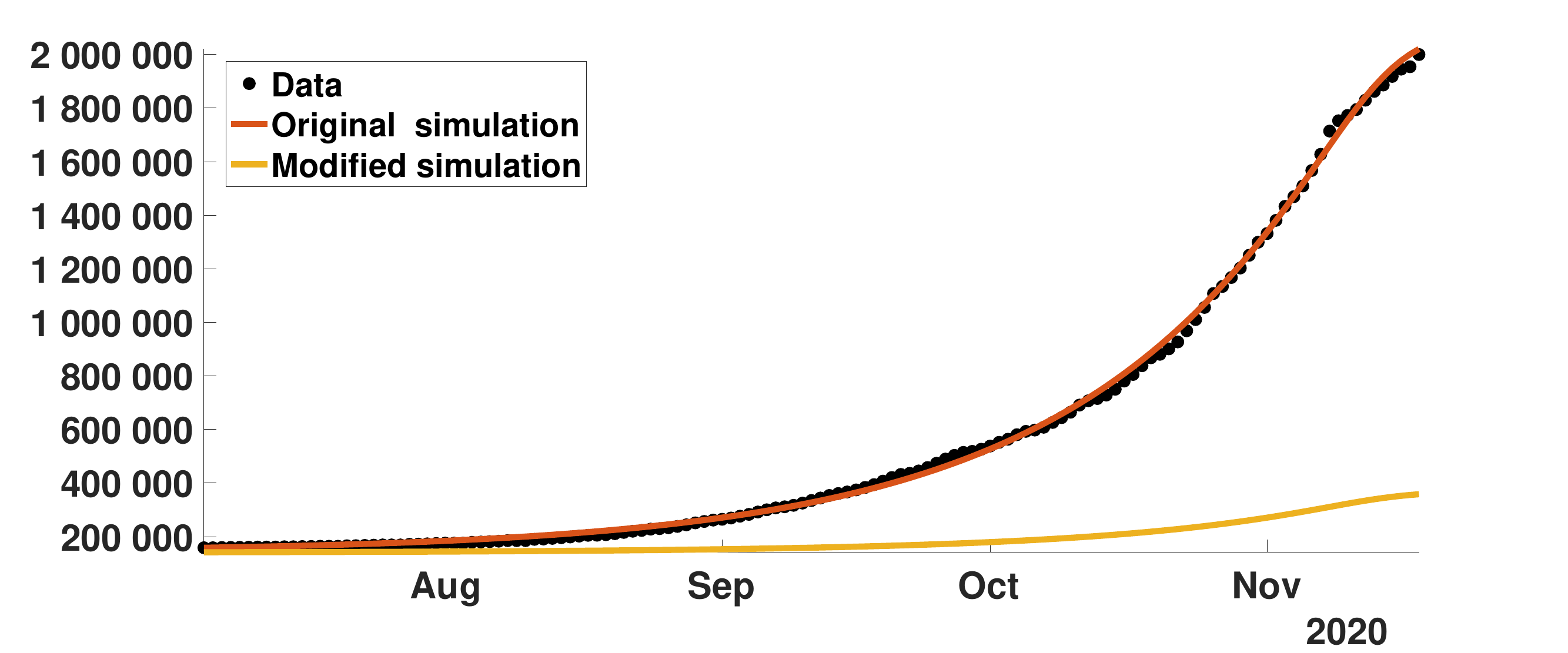} 
		\caption{\textit{We start the simulation at time $t_0=$July 5, 2020  with the initial value $I_0= \dfrac{\CR'(t_2)}{\nu f}$ for red curve and with $I_0=0.41 \dfrac{\CR'(t_2)}{\nu f}$ for yellow curve).}}
		\label{Fig22}
	\end{figure}

	In this section, we obtained a model that covert the changes of regimen (from endemic to epidemic and conversely). Moreover the detection of the changes of regimen between epidemic wave and endemic period is still difficult to detect. An attempt to study this question can be found in  \cite{Demongeot23}.

	\section{Exponential phase with more compartments}
	\label{Section8}
	\subsection{A model with transmission from the unreported infectious}
	\label{Section8.1}
	We consider a model with unreported infection individuals.  
	\begin{equation} \label{8.1}
		\left\lbrace
		\begin{array}{l}
			S'(t)=-\tau(t) S(t) \left(   I(t)+U(t)   \right) , \vspace{0.2cm}\\
			I'(t)=\tau(t) S(t) \left(   I(t)+U(t)   \right)  - \nu I(t), \vspace{0.2cm}\\
			U'(t)=\nu \, \left(1-f\right) \,  I(t)-\eta U(t),
		\end{array}
		\right.
	\end{equation}
	for $t \geq t_0$, and with initial distribution
	\begin{equation}\label{8.2}
		S(t_0)=S_0, \,	I(t_0)=I_0, \, \text{ and } U(t_0)=U_0. 
	\end{equation}
	
	The epidemic model associated with the flowchart in Figure \ref{Fig23A} applies to influenza outbreaks in \cite{Arino06}, hepatitis A outbreaks in \cite{Regan16}, and COVID-19 in \cite{Liu20a}. 
	\begin{figure}[H]
		\begin{center}		
			\includegraphics[scale=0.6]{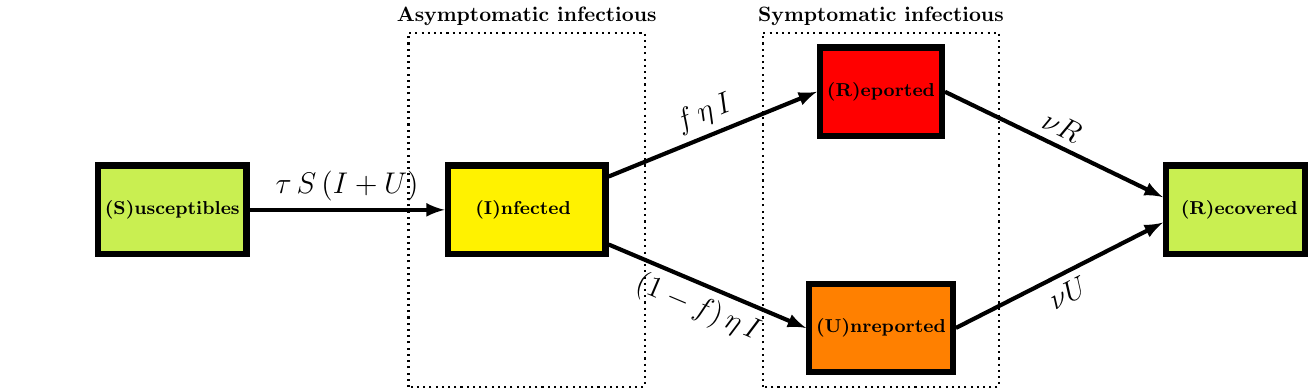}
		\end{center}
		\caption{ \textit{Flowchart.}}\label{Fig23A}
	\end{figure}
	\subsection{The exponential phase approximation}
	\label{Section8.2}
	We assume that $S(t)$ is constant, and equal to $S_0$, and $\tau(t)$ remains constant equal to $\tau_0=\tau(t_0)$. The consider for example the case of a single age group, we obtain the following model which was first considered for COVID-19
	\begin{equation} \label{8.3}
		\left\lbrace
		\begin{array}{l}
			I'(t)=\tau S_0 \left(   I(t)+U(t)   \right)  - \nu I(t), \vspace{0.2cm}\\
			U'(t)=\nu \, \left(1-f\right) \,  I(t)-\eta U(t),
		\end{array}
		\right.
	\end{equation}
	for $t \geq t_0$, and with initial distribution
	\begin{equation}\label{8.4}
		I(t_0)=I_0, \, \text{ and } U(t_0)=U_0. 
	\end{equation}
	
	We can reformulate this system using a matrix formulation
	\begin{equation*}
		\left(\begin{array}{c}
			I'(t)\\
			U'(t)
		\end{array}\right)
		=A
		\left(\begin{array}{c}
			I(t)\\
			U(t)
		\end{array}\right), \forall t \in [t_0,t_1],
	\end{equation*}
	where 
	\begin{equation*}
		A=\left(\begin{array}{cc}
			\tau \, S_0-\nu  & \tau \, S_0\\
			\nu  \left(1-f\right) &-\eta 
		\end{array}\right).
	\end{equation*}
	Then the matrix $A$ is \textbf{irreducible} if and only if 
	$$
	\nu  \left(1-f\right)>0 \text{ and }\tau \, S_0>0.
	$$ 
	Remember 	the model  \eqref{8.3}  to connect the data and the epidemic model
	\begin{equation*}
		\CR'(t)= f \,    \nu \, I(t),   \text{ for }  t \geq t_0.
	\end{equation*}
	Consider the \textbf{exponential phase of the epidemic}. That is, 
	$$
	\CR'(t)= \chi_1 \chi_2e^{\chi_2 t}, \forall t \in [t_0, \tau+t_0], 
	$$
	for some $\tau>0$.  
	\medskip 
	Combining the two previous equations, we obtain 
	$$
	f \,    \nu \, I(t)=  \chi_1e^{\chi_2 (t-t_0)}, \forall t \in [t_0, \tau+t_0], 
	$$
	Remember that $\chi_1$ and $\chi_2$ are computed by using the data.  	 More precisely, these parameters are obtained by fitting $t \to \chi_1e^{\chi_2 t}- \chi_3$ to the cumulative number of cases data during a period of time $[t_0, \tau+t_0]$.

	We can rewrite $	f \,    \nu \, I(t)= \chi_1e^{\chi_2 t} $	by using an inner product  
	\begin{equation*} 
		\left\langle y_0, 	
		\left(\begin{array}{c}
			I(t)\\
			U(t)
		\end{array}
		\right)\right\rangle 
		=	\chi_1 e^{\chi_2 t}, \text{ with } 	y_0= \left(
		\begin{array}{c}
			\nu \, f \\
			0
		\end{array}
		\right),	
	\end{equation*}
	where $	\left\langle ., .\right\rangle$ is the Euclidean inner product defined  in dimension $2$ as
	\begin{equation*}
		\left\langle x, y\right\rangle= x_1 y_1 + x_2 y_2.  
	\end{equation*}
	
	The following theorem is proved in Appendix \ref{AppendixA}. 	
	\begin{theorem} \label{TH8.1} Let $\chi_1>0$, $\chi_2>0$, and $\tau>0$. Let $A$ be a $n$ by $n$ real matrix. Assume that the off-diagonal elements of $A$ are non-negative, and $A$ is irreducible.   Assume that  there exist two vectors $y_0 >0$, and $x_0 >0$ such that 
		
		$$
		\hspace{-1cm}	\text{(Linear model)	}	\hspace{3.2cm}	\dot{x}(t)=Ax(t), \text{ and } x(0)=x_0, 
		$$
		satisfies 
		\begin{equation*} 
			\hspace{-1cm} \text{(Connection with the data)	}		\hspace{1cm}	\langle	y_0, x(t) \rangle =\chi_1 e^{\chi_2 t },\forall t \in \left[0, \tau \right],
		\end{equation*}
		where $\left\langle x, y\right\rangle$ is the Euclidean inner product. 
		
		\medskip 
		Then $\chi_2$ must be the dominant eigenvalue $A$ (i.e.,  the one with the largest real part). Moreover, we can choose a vector $x_0 \gg 0$ (i.e., with all its components strictly positive), satisfying
		$$
		Ax_0=\chi_2 x_0.
		$$ 
		Multiplying $x_0$ by a suitable positive constant, we obtain 
		$\langle	y_0, x_0 \rangle =\chi_1$, and we obtain
		
		$$
		\langle	y_0, x(t) \rangle =\chi_1 e^{\chi_2 t },\forall t \in \left[0, \tau \right].
		$$ 
	\end{theorem}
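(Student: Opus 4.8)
The plan is to combine the Perron--Frobenius theory for quasi-positive (Metzler) matrices with the rigidity of exponential--polynomial functions. Since the off-diagonal entries of $A$ are non-negative, I would first fix $c>0$ large enough that $M:=A+cI$ is entrywise non-negative; $M$ is irreducible because $A$ is. Perron--Frobenius then gives a simple real eigenvalue $r=\rho(M)>0$ equal to the spectral radius of $M$, with a strictly positive right eigenvector $v\gg0$ and a strictly positive left eigenvector $w\gg0$; and — this is the input I would stress — every other eigenvalue $\mu_M$ of $M$ satisfies $|\mu_M|\le r$, hence $\mathrm{Re}\,\mu_M<r$ (even a peripheral eigenvalue of a non-primitive irreducible non-negative matrix has strictly smaller real part, because $\mathrm{Re}\,z<|z|$ unless $z$ is real positive). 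Subtracting $c$, the matrix $A$ has a simple real dominant eigenvalue $\lambda_\ast:=r-c$, with the same eigenvectors $v\gg0$, $w\gg0$, and $\mathrm{Re}\,\mu<\lambda_\ast$ for every other eigenvalue $\mu$ of $A$. I would then bring in the rank-one spectral projection $P=vw^{\top}/(w^{\top}v)$ onto $\ker(A-\lambda_\ast I)=\mathrm{span}(v)$.

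Next I would argue that the constraint $\langle y_0,x(t)\rangle=\chi_1 e^{\chi_2 t}$ on $[0,\tau]$ in fact holds for all real $t$: both sides are restrictions of entire functions of $t$ (the left side is $\langle y_0,e^{tA}x_0\rangle$), so they must agree on all of $\R$ by the identity theorem. Note also that $x_0\neq0$ and $y_0\neq0$, otherwise the left side would vanish identically while $\chi_1>0$. Writing $x(t)=e^{tA}x_0$ and splitting off the dominant mode gives
\[
\langle y_0,x(t)\rangle = c_0\,e^{\lambda_\ast t}+g(t), \qquad c_0 = \langle y_0,Px_0\rangle = \frac{(w^{\top}x_0)\,(y_0^{\top}v)}{w^{\top}v},
\]
where $g(t)=\langle y_0,e^{tA}(I-P)x_0\rangle$ is a finite combination of terms $t^k e^{\mu t}$ with $\mathrm{Re}\,\mu<\lambda_\ast$, so $g(t)=o(e^{\lambda_\ast t})$ as $t\to+\infty$.

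The heart of the argument — and the step I expect to be the main obstacle to phrase cleanly — is showing $c_0>0$ and deducing $\chi_2=\lambda_\ast$. Positivity of $c_0$ is exactly where one needs $y_0$ and $x_0$ to be \emph{non-negative and nonzero} (not merely nonzero): $v\gg0$ forces $y_0^{\top}v>0$, $w\gg0$ forces $w^{\top}x_0>0$, and $w^{\top}v>0$, hence $c_0>0$. Given $c_0>0$, I would compare growth in $\chi_1 e^{\chi_2 t}=c_0 e^{\lambda_\ast t}+g(t)$ as $t\to+\infty$: dividing by $e^{\lambda_\ast t}$, the right side tends to $c_0$, which already forces $\chi_2\le\lambda_\ast$; and if $\chi_2<\lambda_\ast$ the left side would tend to $0\neq c_0$. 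Therefore $\chi_2=\lambda_\ast$, the dominant eigenvalue of $A$ (the unique one of largest real part). An equivalent route avoiding asymptotics: the exponential--polynomials $\{t^k e^{\mu t}\}$ are linearly independent on any nontrivial interval, so in the identity on $[0,\tau]$ the coefficient attached to the eigenvalue $\lambda_\ast$ — which is the constant $c_0$, since $\lambda_\ast$ is simple — must vanish unless $\lambda_\ast=\chi_2$; and $c_0\neq0$.

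Finally, for the ``moreover'' part I would just exhibit the eigenvector: set $x_0:=\bigl(\chi_1/(y_0^{\top}v)\bigr)\,v$, which is $\gg0$ since $v\gg0$, $\chi_1>0$, $y_0^{\top}v>0$, and which satisfies $Ax_0=\lambda_\ast x_0=\chi_2 x_0$ together with $\langle y_0,x_0\rangle=\chi_1$. The solution of $\dot x=Ax$ with this initial value is $x(t)=e^{\chi_2 t}x_0$, whence $\langle y_0,x(t)\rangle=\chi_1 e^{\chi_2 t}$ for all $t$, in particular on $[0,\tau]$. The only genuinely delicate inputs are the simplicity and strict real-part dominance of $\lambda_\ast$ coming from Perron--Frobenius, and the sign bookkeeping that yields $c_0>0$; the rest is routine.
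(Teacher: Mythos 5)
Your proof is correct, and it reaches the conclusion by a genuinely different mechanism than the paper's own argument (Theorem \ref{THA1} in the appendix). The paper never extends the identity beyond $[0,\tau]$ and never decomposes $e^{tA}$: it differentiates the identity repeatedly at $t=0$ to obtain $\langle y_0,(A+\delta I)^m x_0\rangle=\chi_1(\chi_2+\delta)^m$ for all integers $m$, then invokes the power-method limit $(A+\delta I)^m/r(A+\delta I)^m\to\Pi$ (valid because adding $\delta I$ makes the matrix primitive, not merely irreducible) together with the strict positivity $\langle y_0,\Pi x_0\rangle>0$ to force $(\chi_2+\delta)/(s(A)+\delta)=1$. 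You instead work with the rank-one spectral projection $P$, the simplicity and strict real-part dominance of the Perron root of an irreducible Metzler matrix, analytic continuation of the identity to all real $t$ (or, equivalently, linear independence of exponential-polynomials on $[0,\tau]$, which lets you stay on the original interval), and a growth comparison as $t\to+\infty$. The two routes share the decisive ingredient, namely the positivity of the Perron projector sandwiched between the positive vectors $y_0$ and $x_0$ (your $c_0=\langle y_0,Px_0\rangle>0$ is the paper's $\langle y_0,\Pi x_0\rangle>0$), but yours makes the spectral-gap statement ($\Re\mu<\lambda_\ast$ for every other eigenvalue, $\lambda_\ast$ simple) explicit, whereas the paper's power-method limit uses it only implicitly; conversely, the paper's version needs only the derivatives at $t=0$ and never has to discuss generalized eigenvectors or asymptotics in $t$. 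Your handling of the ``moreover'' part, taking $x_0$ to be a scaled Perron eigenvector so that $\langle y_0,x_0\rangle=\chi_1$, is exactly what the paper means by replacing $x_0$ with $\Pi x_0$.
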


	\medskip 
	
	\noindent  Returning back to the example of epidemic model with unreported cases, we must find $I(0)>0$ and $U(0)>0$ such that 
	$$
	\left(\begin{array}{cc}
		\tau \, S_0-\nu  & \tau \, S_0\\
		\nu  \left(1-f\right) &-\eta 
	\end{array}\right)	\left(\begin{array}{c}
		I(0)\\
		U(0)
	\end{array}
	\right)=\chi_2	\left(\begin{array}{c}
		I(0)\\
		U(0)
	\end{array}
	\right). 
	$$
	After a few computations (see the supplementary in Liu et al. \cite{Liu20a}), we obtain 
	\begin{equation}
		\label{8.5}
		\tau=\frac{\chi_2+\nu}{ S_0 } \dfrac{\eta+\chi_2 }{\nu(1-f)+\eta+\chi_2 },
	\end{equation}
	and
	\begin{equation}
		\label{8.6}
		U_0= \dfrac{\nu(1-f)}{\eta+\chi_2} I_0=\dfrac{(1-f)\nu}{\eta+\chi_2} I_0.
	\end{equation}
	
	\begin{remark}
		\label{REM8.2}
		Let $\chi_1>0$, $\chi_2>0$, $\phi_1>0$, $\phi_2>0$, and $\tau>0$. 	Assume that $x_0 >0$, $y_0 >0$ and $z_0 >0$ satisfy
		\begin{equation*}
			\dot{x}(t)=Ax(t), \text{ and } x(0)=x_0, 
		\end{equation*}
		and
		\begin{equation*}
			\langle	y_0, x(t) \rangle =\chi_1 e^{\chi_2 t },\forall t \in \left[0, \tau \right],
		\end{equation*}
		\begin{equation*}
			\langle	z_0, x(t) \rangle =\phi_1 e^{\phi_2 t },\forall t \in \left[0, \tau \right]. 
		\end{equation*}
		If $\chi_2  \neq \phi_2 $ the matrix $A$ must be reducible.  That is, up to a re-indexation of the components of $x(t)$, the matrix $A$ reads as 
		$$
		A= \left(
		\begin{array}{cc}
			A_{11} & 0\\
			A_{21} & 	A_{22}  
		\end{array}
		\right)
		$$
		where $A_{ij}$ are block matrices. The matrix $A$ presents a weak coupling between the last block's components and the first block's components. 
	\end{remark}	
	
	\subsection{Uncertainty due to the period chosen to fit the data}
	\label{Section8.3}

	The principle of our method is the following. By using an exponential best fit method we obtain a best fit of \eqref{5.3} to the data over a time $[t_1,t_2]$ and we derive the parameters $\chi_1$ and $\chi_2$. The values of $I_0$ $U_0$,  and $\tau_0$ are obtained by using 	\eqref{5.4}, \eqref{8.2} and \eqref{8.3}. Next, we use 
	$$
	\tau(t)= \tau_0 e^{-\mu (t-N)^+}, 
	$$
	we fix $N$ (first day of public intervention) to some value and we obtain $\mu$ by trying to get the best fit to the data.

	In the method the uncertainty in our prediction is due to the fact that several sets of parameters $(t_1,t_2,N,f)$ may give a good fit to the data. As a consequence, at the early stage of the epidemics (in particular before the turning point) the outcome of our method can be very different from one set of parameters to another. We try to solve this uncertainty problem by using several choices of the period to fit an exponential growth of the data to determine $\chi_1$ and $\chi_2$ and several choices for the first day of intervention $N$. So in this section, we vary the time interval $[t_1,t_2]$,  during which we use the data to obtain $\chi_1$ and $\chi_2$ by using an exponential fit. In the simulations below, the first day $t_1$ and the last day $t_2$ vary such that
	$$
	\text{Earliest day } \leq t_1 \leq t_2 \leq \text{Latest day}.
	$$
	We also vary the first day of public intervention:
	$$
	\text{Earliest first day of intervention} \leq N \leq \text{Latest first day of intervention}.
	$$
	
	We vary $f$ between $0.1$ to $0.9$. For each $(t_1,t_2,\nu,f,\eta,\mu,N)$ we evaluate $\mu$ to obtain the best fit of the model to the data. We use the mean absolute deviation as the distance to data to evaluate the best fit to the data.
	We obtain a large number of best fit depending on $(t_1,t_2,\nu,f,\eta,\mu,N)$ and we plot smallest mean absolute deviation $\rm{MAD}_{\min}$. Then we plot all the best fit graphs  with mean absolute deviation between $\rm{MAD}_{\min}$ and $\rm{MAD}_{\min}+40$.
	
	\medskip 
	The figure below is taken from Liu et al. \cite{Liu21}. 
	\begin{figure}
		\begin{center}
			\textbf{(a)} \hspace{7cm} 	\textbf{(b)}  \\
			\includegraphics[scale=.15]{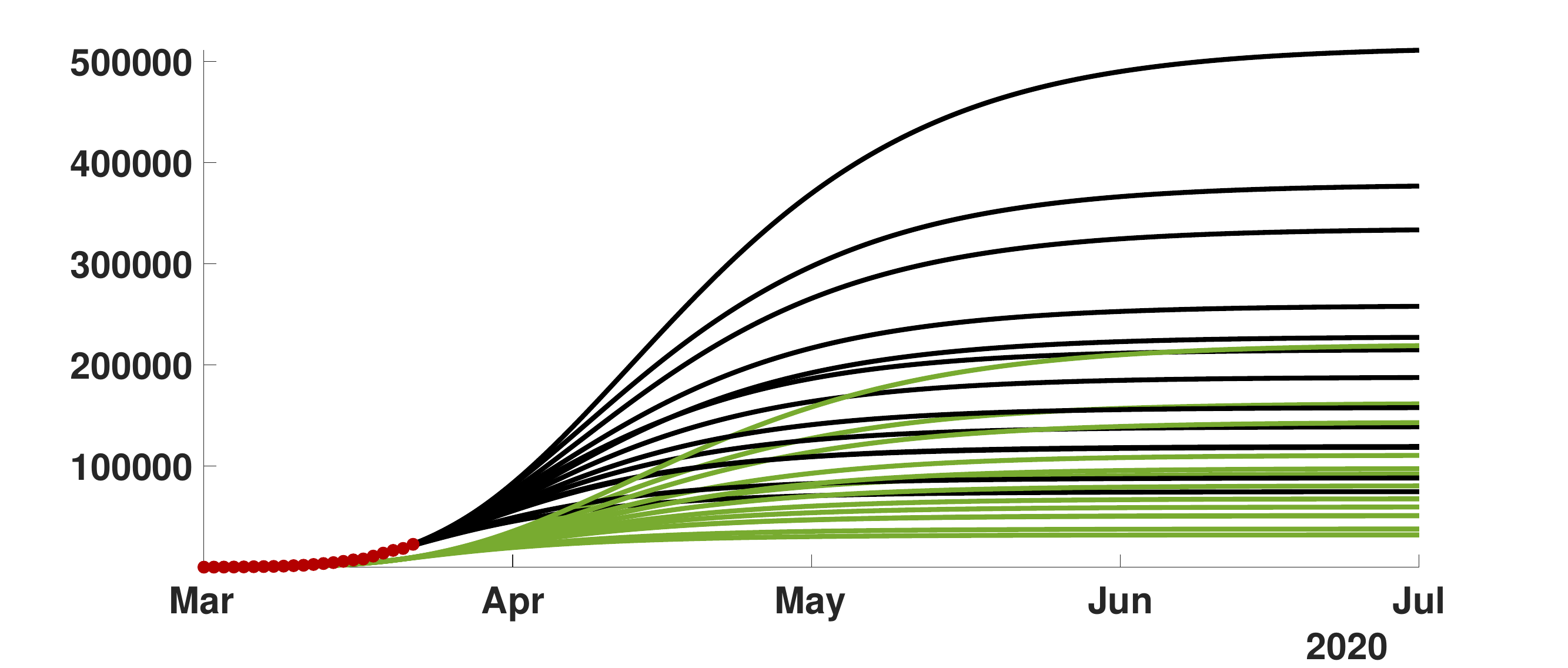}\includegraphics[scale=.15]{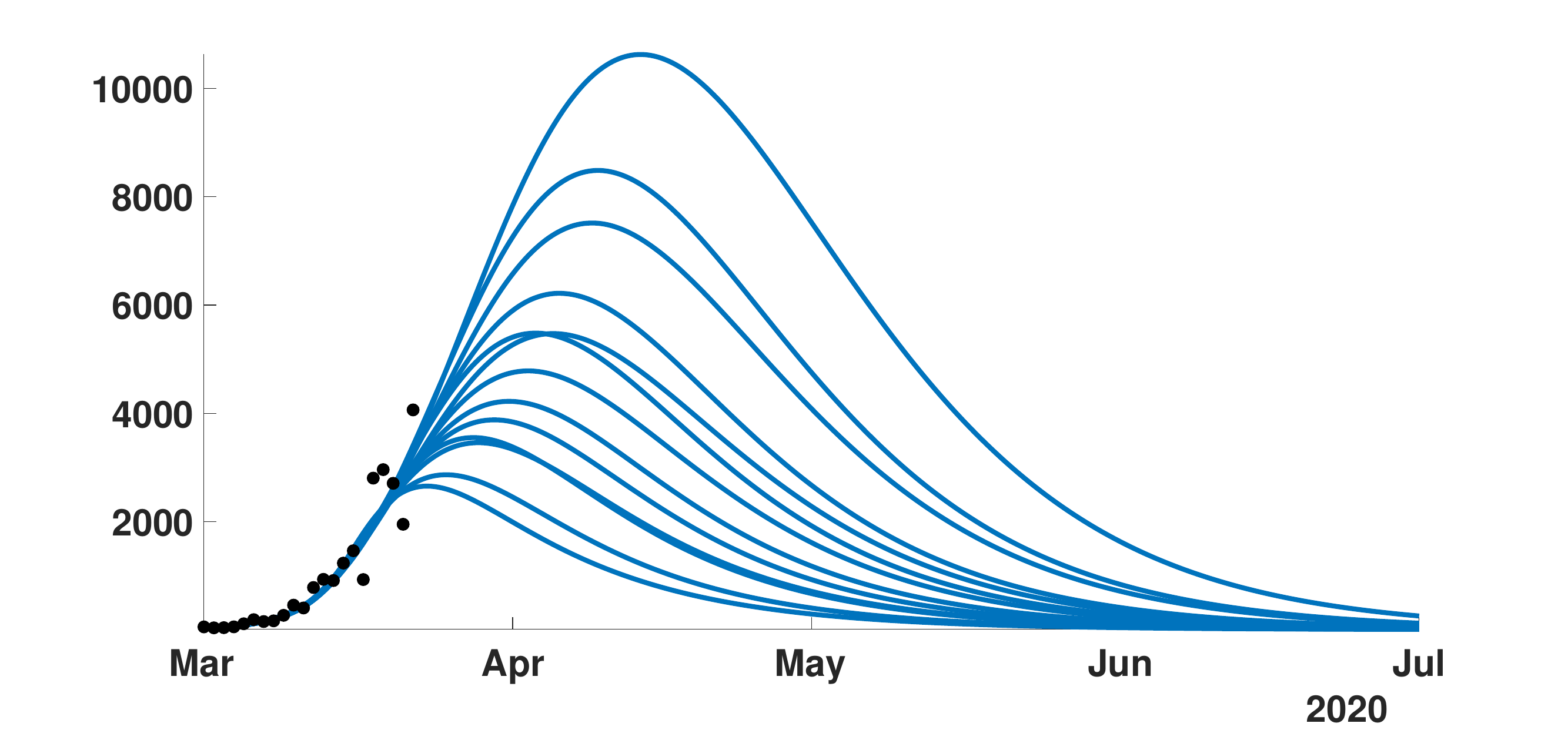}\\
			\textbf{(c)} \hspace{7cm} 	\textbf{(d)}  \\
			\includegraphics[scale=.15]{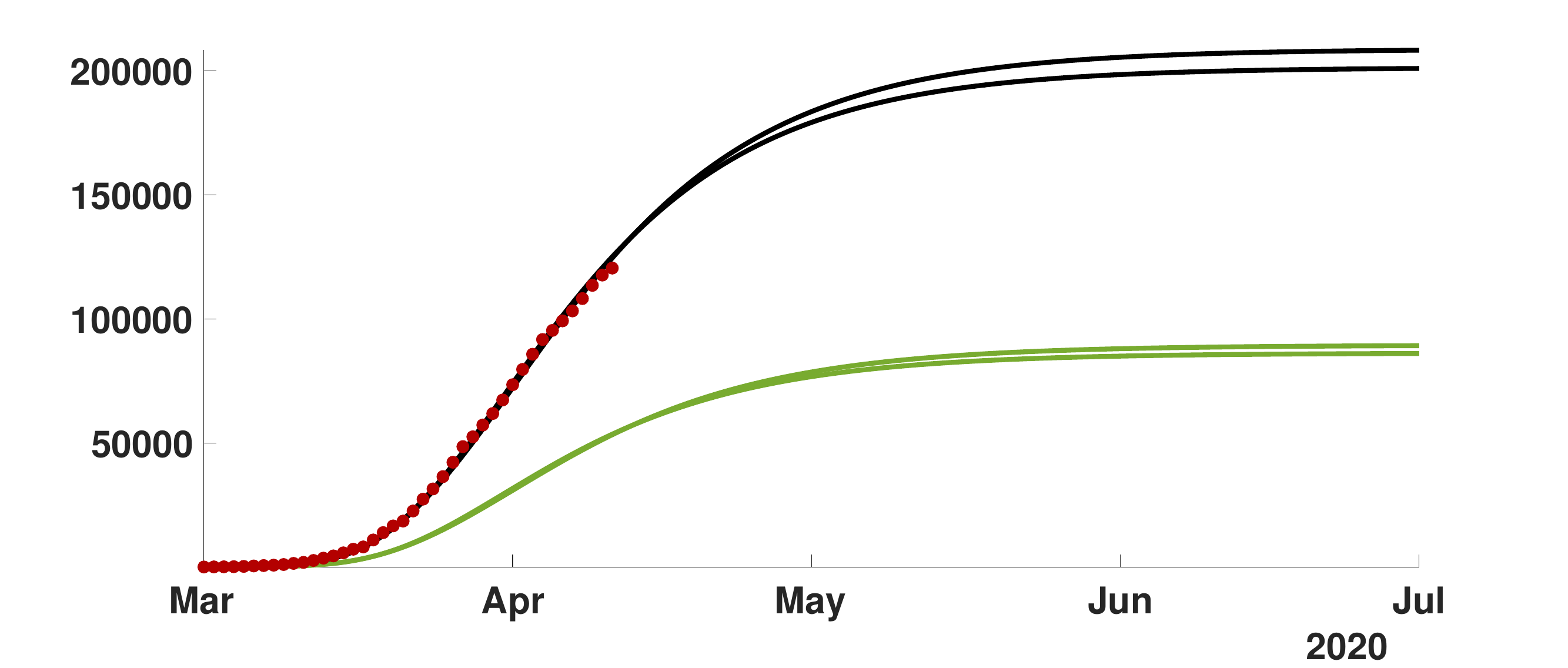}\includegraphics[scale=.15]{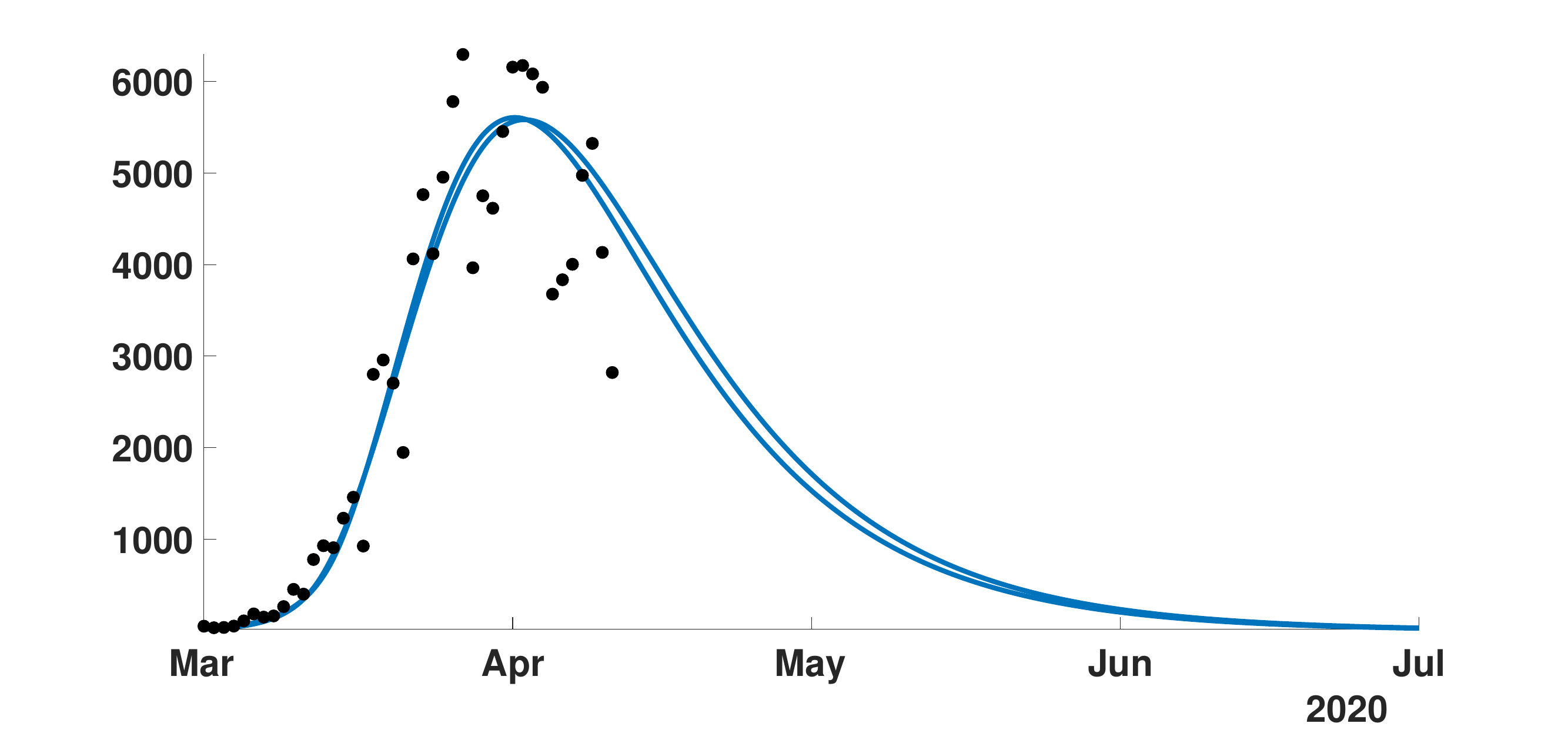}\\
			\textbf{(e)} \hspace{7cm} 	\textbf{(f)}  \\
			\includegraphics[scale=.15]{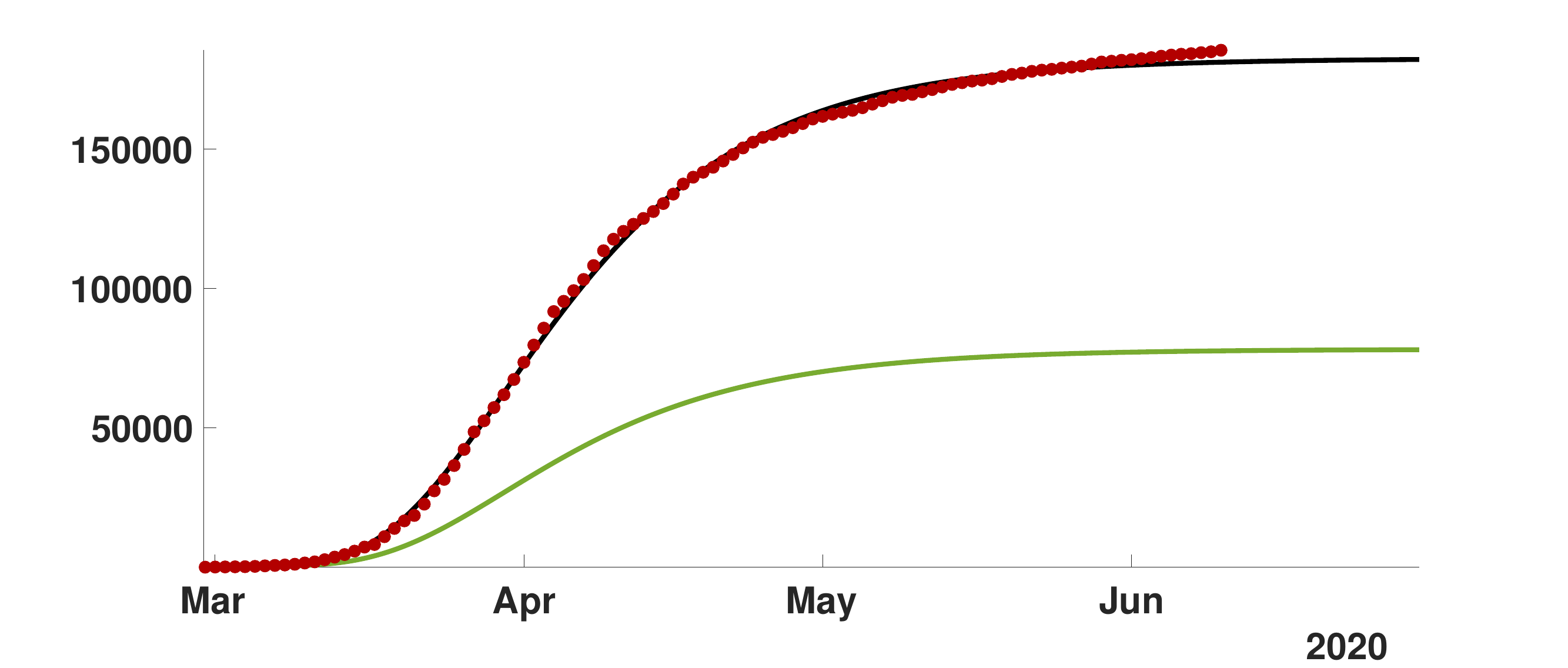}\includegraphics[scale=.15]{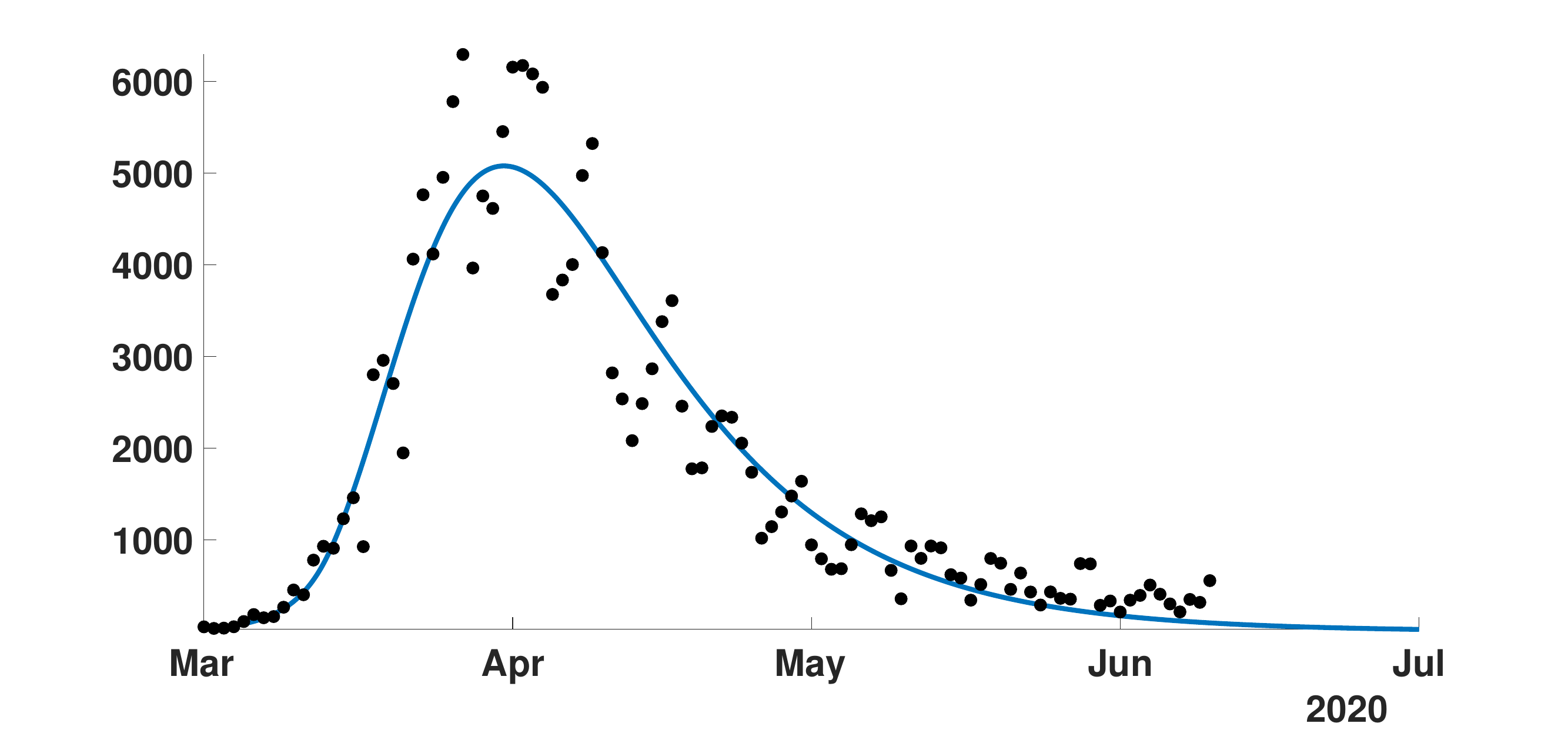}
		\end{center}
		\caption{\textit{In this figure, we consider the data for Germany. We plot the cumulative number of cases of the left hand side and the daily number of cases on the right hand side. In (a) and (b) we use the data until March $22$. In (c) and (d)  we use the data until April $11$. (e) and (f)  we use the data until June $10$.
		}}
		\label{Fig23}
	\end{figure}
	
	\section{Modeling COVID-19 epidemic with age groups}
	\label{Section9}
	
	This section considers an epidemic whenever the population is divided into age groups. Here, age means the chronological age, which is nothing but the time since birth.

	\subsection{Epidemic model with  age groups}
	\label{Section9.1}
	
	The epidemic model with age structure and unreported cases reads as follows, for each $t \geq  t_0,$ 
	\begin{equation*} 
		\left\{
		\begin{array}{c}
			S_1'(t)=- \tau_{1} S_{1}(t) \bigg[\phi_{11} \dfrac{  I_1(t)+U_1(t) }{N_1}+\ldots +\phi_{1n}\dfrac{  I_n(t)+U_n(t) }{N_n} \bigg]  ,\\
			\vdots\\
			S_n'(t)=- \tau_{n} S_{n}(t)  \bigg[ \phi_{n1} \dfrac{  I_1(t)+U_1(t) }{N_1} + \ldots +\phi_{nn}\dfrac{ I_n(t)+U_n(t) }{N_n} \bigg], 
		\end{array}
		\right.
	\end{equation*}
	\begin{equation*} 
		\left\{
		\begin{array}{c}
			I_1'(t)=\tau_{1} S_{1}(t) \bigg[\phi_{11} \dfrac{  I_1(t)+U_1(t) }{N_1}+\ldots +\phi_{1n}\dfrac{  I_n(t)+U_n(t) }{N_n} \bigg]  - \nu I_1(t),\\
			\vdots\\
			I_n'(t)=\tau_{n} S_{n}(t)  \bigg[ \phi_{n1} \dfrac{  I_1(t)+U_1(t) }{N_1} + \ldots +\phi_{nn}\dfrac{ I_n(t)+U_n(t) }{N_n} \bigg] - \nu I_n(t),
		\end{array}
		\right.
	\end{equation*}
	
	and 
	\begin{equation*} 
		\left\lbrace
		\begin{array}{c}
			U_1'(t)=\nu^1_2 \,  I_1(t)-\eta U_1(t),\\
			\vdots\\
			U_n'(t)=\nu^n_{2} \,  I_n(t)-\eta U_n(t),
		\end{array}
		\right.
	\end{equation*}
	with the initial values 
	$$
	S_i (t_0)=S^0_i , I_i (t_0)=I^0_i , \text{ and } U_i (t_0)=U^0_i , \forall i=1, \ldots, n. 
	$$
	
	\subsection{Cumulative reported cases with age  structure in Japan}
	\label{Section9.2}
	We first choose two days $d_1$ and $d_2$ between which each cumulative age group grows like an exponential. By fitting the cumulative age classes $[0,10[$,$[10,20[$, \ldots and $[90,100[$ between $d_1$ and $d_2$,  for~each age class $j=1,\ldots 10$ we can find $\chi^j_1$,  $\chi^j_2$  and $\chi^j_3$  
	$$
	\CR^{data}_j(t)  \simeq  \chi^j_1 \, e^{\chi^j_2t } -\chi^j_3.
	$$
	
	We obtain 
	\begin{equation} \label{9.1}
		\left\lbrace 
		\begin{array}{c}
			\CR_1(t)= \chi^1_1 \, e^{\chi^1_2 t } -\chi^1_3, \\
			\vdots\\
			\CR_n(t)= \chi^n_1 \, e^{\chi^n_2 t } -\chi^n_3,
		\end{array}
		\right.
	\end{equation}
	where 
	$$
	\chi^i_j \geq 0,\forall i=1,\ldots,n, \,\forall j=1,2,3.  
	$$
	In Figures \ref{Fig24}-\ref{Fig25}, the growth rate of the exponential fit depends on the age group \cite{Griette20}. 
	In Figures \ref{Fig24}-\ref{Fig25},  we see the similarity of dynamical behavior at the two extreme age groups $[0,20]$ and $[70,100]$.

	\begin{figure}[H]
		\centering
		\includegraphics[scale=0.15]{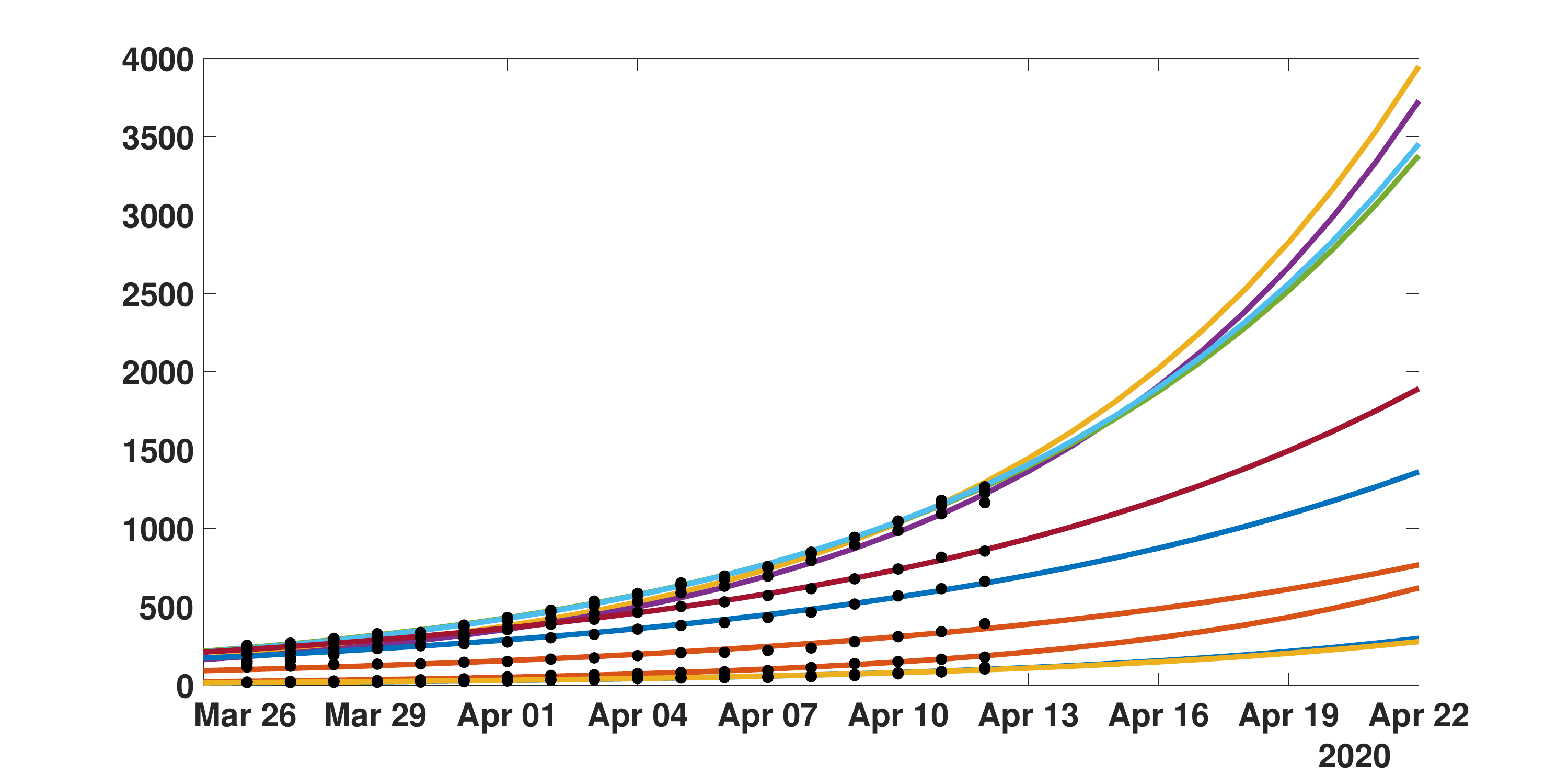}
		\caption{\textit{In this figure, we plot an exponential fit to the cumulative data for each age groups $[0,10[$,$[10,20[$, \ldots and $[90,100[$   in Japan.}}
		\label{Fig24}
	\end{figure}
	
	\begin{figure}[H]
		\centering
		\includegraphics[scale=0.15]{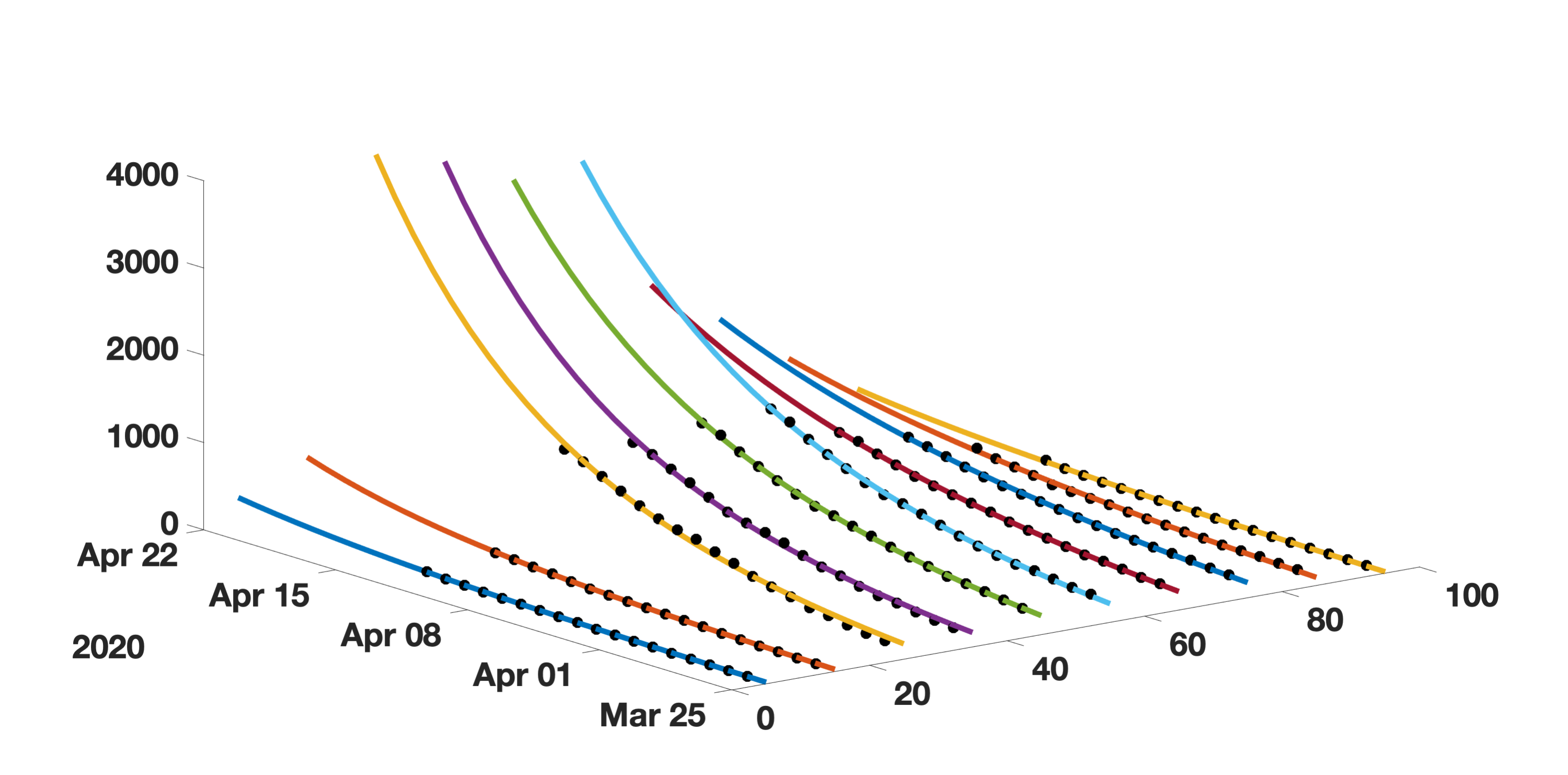}
		\caption{\textit{In this figure, we plot an exponential fit to the cumulative data for each age groups $[0,10[$,$[10,20[$, \ldots and $[90,100[$   in Japan.}}
		\label{Fig25}
	\end{figure}

	\subsection{Method to Fit of the Age Structured Model to the Data}
	\label{Section9.3}
	
	By assuming that the number of  susceptible individuals remains constant we have for each $t \geq  t_0,$ 
	\begin{equation*} 
		\left\{ 
		\begin{array}{c}
			I_1'(t)=\tau_{1} S_{1} \bigg[\phi_{11} \dfrac{  I_1(t)+U_1(t) }{N_1}+\ldots +\phi_{1n}\dfrac{  I_n(t)+U_n(t) }{N_n} \bigg]  - \nu I_1(t),\\
			\vdots\\
			I_n'(t)=\tau_{n} S_{n} \bigg[ \phi_{n1} \dfrac{  I_1(t)+U_1(t) }{N_1} + \ldots +\phi_{nn}\dfrac{ I_n(t)+U_n(t) }{N_n} \bigg] - \nu I_n(t),
		\end{array}
		\right.
	\end{equation*}
	and 
	\begin{equation} \label{9.2}
		\left\lbrace
		\begin{array}{c}
			U_1'(t)=\nu^1_2 \,  I_1(t)-\eta U_1(t),\\
			\vdots\\
			U_n'(t)=\nu^n_{2} \,  I_n(t)-\eta U_n(t),
		\end{array}
		\right.
	\end{equation}
	with the initial values 
	$$
	I_i (t_0)=I^0_i , \text{ and } U_i (t_0)=U^0_i , \forall i=1, \ldots, n. 
	$$
	
	\subsection{Rate of contact}
	\label{Section9.4}
	The values in Figure \ref{Fig29} describe the contact rates between age groups. The values used are computed from the values obtained in  \cite{Prem17}.
	
	\begin{figure}
		\begin{center}
			\includegraphics[scale=0.8]{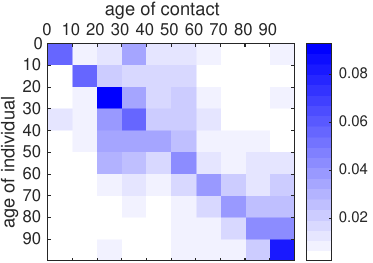}
			\caption{\textit{For each age class in the $y$-axis we plot the rate of contacts between one individual of this age class and another individual of the age class indicated on the $x$-axis.  The figure represents  the rate of contacts before the start of public measures (April 11).}}\label{Fig29}
		\end{center}
	\end{figure}

	We assume that 
	\begin{equation*} 
		\left\{ 
		\begin{array}{c}
			\CR_1(t)'= \nu^1_{1} I_1(t), \\
			\vdots\\
			\CR_n(t)'= \nu^n_{1} I_n(t), 
		\end{array}
		\right.
	\end{equation*}
	where
	$$
	\nu^i_1=\nu\, f_i, \text{ and } \nu^i_2 =\nu \, (1-f_i), \forall i=1,\ldots,n.
	$$
	Therefore, we obtain 
	\begin{equation*}
		I_j(t)=I^\star_j e^{\chi^j_2 t} ,
	\end{equation*}
	where 
	\begin{equation*}
		I^\star_j:=\dfrac{\chi^j_1 \, \chi^j_2 }{ \nu^j_{1}} .
	\end{equation*}
	If we assume that the  $U_j(t)$ have the following form  
	\begin{equation*}
		U_j(t)=U^\star_j e^{\chi^j_2  t} ,
	\end{equation*}
	then by substituting  in \eqref{9.2} we obtain 
	\begin{equation*}
		U^\star_j =\dfrac{ \nu^j_{2} I^\star_j  }{\eta  +\chi^j_2  }. 
	\end{equation*}
	
	The cumulative number of unreported cases $\CU_j(t)$ is computed as  
	\begin{equation*}
		\CU_j(t)' = \nu^j_2 I_j(t),
	\end{equation*}
	and we used the following initial condition: 
	\begin{equation*}
		\CU_j(0)=\CU^\star_j = \int_{-\infty}^{0}\nu^j_2 I^*_j e^{\chi^j_2 s}ds = \dfrac{\nu^j_2 I_j^\star}{\chi^j_2}.
	\end{equation*}
	We define the error between the data and the model as follows
	\begin{equation*}
		\left\{ 
		\begin{array}{c}
			I_1'(t)=\tau_{1} S_{1} \bigg[\phi_{11} \dfrac{ I_1(t)+U_1(t) }{N_1}+\ldots +\phi_{1n}\dfrac{ I_n(t)+U_n(t) }{N_n} \bigg]  - \nu I_1(t)+	\varepsilon_1(t) ,\\
			\vdots\\
			I_n'(t)=\tau_{n} S_{n} \bigg[ \phi_{n1} \dfrac{ I_1(t)+U_1(t) }{N_1} + \ldots +\phi_{nn}\dfrac{ I_n(t)+U_n(t) }{N_n} \bigg] -\nu I_n(t)+	\varepsilon_n(t),
		\end{array}
		\right.
	\end{equation*}
	or equivalently 
	\begin{equation*}
		\left\{ 
		\begin{array}{c}
			\varepsilon_1(t)=\left( \chi^1_2 +  \nu\right) I^\star_1 e^{\chi^1_2 t} -\tau_{1} S_{1} \bigg[\phi_{11} \dfrac{ I^\star_1+U^\star_1}{N_1}e^{\chi^1_2 t} +\ldots +\phi_{1n} \dfrac{ I^\star_n+U^\star_n}{N_n} e^{\chi^n_2 t}  \bigg],  \\
			\vdots\\
			\varepsilon_n(t)=\left( \chi^n_2 +  \nu\right) I^\star_n e^{\chi^n_2 t} -\tau_{n} S_{n} \bigg[ \phi_{n1}  \dfrac{ I^\star_1+U^\star_1 }{N_1} e^{\chi^1_2 t} + \ldots 
			+\phi_{nn}\dfrac{ I^\star_n+U^\star_n}{N_n} e^{\chi^n_2 t} \bigg].
		\end{array}
		\right.
	\end{equation*}
	
	\begin{lemma} Assume that the matrix $\phi$ be fixed.  If we consider the errors $\varepsilon^{\tau}_1(t), \ldots, \varepsilon^{\tau}_n(t) $ as a function of $\tau$, then we can a unique value $\tau^*=\left(\tau^\star_1, \ldots, \tau^\star_n \right)$ which minimizes that $L^2$ norm of the errors. That 		
		$$
		\sum_{j=1,\ldots,n}  \int_{d_1}^{d_2}   \varepsilon^{\tau^\star}_j(t) ^2 dt.= \min_{\tau \in \R^n}  \sum_{j=1,\ldots,n}  \int_{d_1}^{d_2}   \varepsilon^\tau_j(t) ^2 dt.
		$$
		Moreover, 
		\begin{equation*}
			\tau_j^\star=\dfrac{\int_{d_1}^{d_2} K_j(t) H_j(t) dt}{\int_{d_1}^{d_2} H_j(t)^2 dt},
		\end{equation*}
		with 
		$$
		K_j(t):=\left( \chi^j_2 +  \nu\right) I^\star_j e^{\chi^j_2 t}, \forall j=1,\ldots, n, 
		$$
		and 
		$$
		H_j(t):=S_{j} \bigg[ \phi_{j1}  \frac{ I^\star_1+U^\star_1 }{N_1} e^{\chi^1_2 t} + \ldots +\phi_{jn}\frac{ I^\star_n+U^\star_n}{N_n}e^{\chi^n_2 t} \bigg], \forall j=1,\ldots, n.
		$$
		
	\end{lemma}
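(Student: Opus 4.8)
The plan is to notice that the objective function decouples completely over the index $j$, so that minimizing over the vector $\tau \in \R^n$ reduces to $n$ independent one-dimensional least-squares problems, each solved by completing the square.

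First I would record the compact form of the error terms. Collecting the factor $\tau_j$ in the displayed expression for $\varepsilon_j(t)$ and using the definitions of $K_j$ and $H_j$ (together with the closed forms $I^\star_k = \chi^k_1\chi^k_2/\nu^k_1$ and $U^\star_k = \nu^k_2 I^\star_k/(\eta+\chi^k_2)$, neither of which involves $\tau$), one obtains
$$\varepsilon_j^\tau(t) = K_j(t) - \tau_j\, H_j(t), \qquad j = 1,\dots,n.$$
The structural point is that $\varepsilon_j^\tau$ depends on the whole vector $\tau$ only through its $j$-th coordinate $\tau_j$. Writing $\mathcal E(\tau)$ for the total $L^2$ error, this gives
$$\mathcal E(\tau) = \sum_{j=1}^n \int_{d_1}^{d_2}\big(K_j(t) - \tau_j H_j(t)\big)^2\,dt = \sum_{j=1}^n q_j(\tau_j), \quad q_j(s) := \int_{d_1}^{d_2}\big(K_j(t) - s H_j(t)\big)^2\,dt,$$
so minimizing $\mathcal E$ over $\R^n$ is equivalent to minimizing each scalar function $q_j$ over $\R$ separately.

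Next I would expand $q_j(s) = a_j s^2 - 2 b_j s + c_j$ with $a_j = \int_{d_1}^{d_2} H_j(t)^2\,dt$, $b_j = \int_{d_1}^{d_2} K_j(t)H_j(t)\,dt$ and $c_j = \int_{d_1}^{d_2} K_j(t)^2\,dt$. To conclude that $q_j$ has a unique minimizer I need $a_j > 0$, i.e. that $H_j$ does not vanish identically on the fitting window $[d_1,d_2]$. This is where the positivity assumptions standing throughout the paper enter: $\chi^k_1,\chi^k_2 > 0$ gives $I^\star_k > 0$ and $U^\star_k \geq 0$, while $S_j > 0$ and the fact that the row $(\phi_{j1},\dots,\phi_{jn})$ of the contact matrix has at least one positive entry make $H_j$ a nontrivial nonnegative combination of the strictly positive exponentials $t \mapsto e^{\chi^k_2 t}$; hence $H_j(t) > 0$ on $[d_1,d_2]$ and $a_j > 0$. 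Consequently $q_j$ is a strictly convex parabola whose unique minimizer is $s = b_j/a_j$, and substituting back the definitions of $a_j$ and $b_j$ yields exactly $\tau_j^\star = \int_{d_1}^{d_2} K_j(t)H_j(t)\,dt\big/\int_{d_1}^{d_2} H_j(t)^2\,dt$. Assembling the $\tau_j^\star$ into a vector $\tau^\star$ produces the unique global minimizer of $\mathcal E$, which is the assertion.

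The argument is essentially mechanical once the decoupling is observed; the only delicate point is the non-degeneracy $a_j > 0$. If some row of $\phi$ were zero, the corresponding $\varepsilon_j^\tau$ would not depend on $\tau_j$ at all and uniqueness would fail, so the real hypothesis doing the work is that the contact structure is non-trivial for every age group (equivalently, that each $H_j$ is not identically zero on $[d_1,d_2]$); I would make this assumption explicit in the statement, or verify it from the strict positivity of $S_j$, $\phi$, and the exponential fits.
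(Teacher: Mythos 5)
Your proof is correct and follows essentially the same route as the paper: write $\varepsilon_j^\tau(t)=K_j(t)-\tau_j H_j(t)$, observe that the total $L^2$ error decouples over $j$, and minimize each quadratic in $\tau_j$ (the paper differentiates and sets the derivative to zero, you complete the square --- the same computation). Your extra remark that uniqueness requires $\int_{d_1}^{d_2} H_j(t)^2\,dt>0$, guaranteed when $S_j>0$ and row $j$ of $\phi$ is nontrivial, is a point the paper leaves implicit and is worth keeping.
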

	\begin{proof}
		We look for the vector $\tau=\left(\tau_1, \ldots, \tau_n \right)$ which minimizes of 
		$$
		\min_{\tau \in \R^n}  \sum_{j=1,\ldots,n}  \int_{d_1}^{d_2}   \varepsilon_j(t) ^2 dt.
		$$
		Define for each $j=1,\dots,n$ 
		$$
		K_j(t):=\left( \chi^j_2 +  \nu\right) I^\star_j e^{\chi^j_2 t} 
		$$
		and 
		$$
		H_j(t):=S_{j} \bigg[ \phi_{j1}  \frac{ I^\star_1+U^\star_1 }{N_1} e^{\chi^1_2 t} + \ldots 
		+\phi_{jn}\frac{ I^\star_n+U^\star_n}{N_n}e^{\chi^n_2 t} \bigg],
		$$
		so that 
		$$
		\varepsilon_j(t)=K_j(t)-\tau_j H_j(t).
		$$
		Hence for each $j=1,\dots,n$ 
		$$
		\int_{d_1}^{d_2} \varepsilon_j(t)^2 dt=\int_{d_1}^{d_2} K_j(t)^2 dt-2 \tau_j \int_{d_1}^{d_2} K_j(t) H_j(t) dt+\tau_j^2 \int_{d_1}^{d_2} H_j(t)^2 dt,
		$$
		and  the minimum of  $	\int_{d_1}^{d_2} \varepsilon_j(t)^2 dt$ is obtained for $ \tau_j$ satisfying 
		$$
		0=\dfrac{\partial }{\partial \tau_j}\int_{d_1}^{d_2} \varepsilon_j(t)^2 dt=-2 \int_{d_1}^{d_2} K_j(t) H_j(t) dt+2 \tau_j \int_{d_1}^{d_2} H_j(t)^2 dt
		$$
		whenever 
		\begin{equation*}
			\tau_j=\dfrac{\int_{d_1}^{d_2} K_j(t) H_j(t) dt}{\int_{d_1}^{d_2} H_j(t)^2 dt}.
		\end{equation*}
		Under this condition, we obtain 
		$$
		\int_{d_1}^{d_2} \varepsilon_j(t)^2 dt=\int_{d_1}^{d_2} K_j(t)^2 dt-\tau_j^2 \int_{d_1}^{d_2} H_j(t)^2 dt.
		$$
	\end{proof}

	\begin{remark} It does not seem possible to estimate the matrix of contact $\phi$ by using similar optimization method. Indeed, if we look for a matrix $\phi=\left(\phi_{ij} \right)$ which  minimizes 
		$$
		\min_{\phi \in M_n\left( \R \right)}  \sum_{j=1,\ldots,n}  \int_{d_1}^{d_2}   \varepsilon_j(t) ^2 dt,
		$$
		it turn out that 
		$$
		\sum_{j=1,\ldots,n}  \int_{d_1}^{d_2}   \varepsilon_j(t) ^2 dt=0
		$$ whenever $\phi$ is diagonal. Therefore the optimum is reached for any diagonal matrix. Moreover by using similar considerations, if several $\chi^2_j$ are equal, we can find a multiplicity of optima (possibly with $\phi$ not diagonal). This~means that trying to optimize by using the matrix $\phi$ does not yield significant and reliable information.   
	\end{remark}
	
	In the Figure below, we present an example of application of our method to fit the Japanese data. We use the period going from 20 March to 15 April.

	\begin{figure}[H]
		\centering
		\includegraphics[scale=0.8]{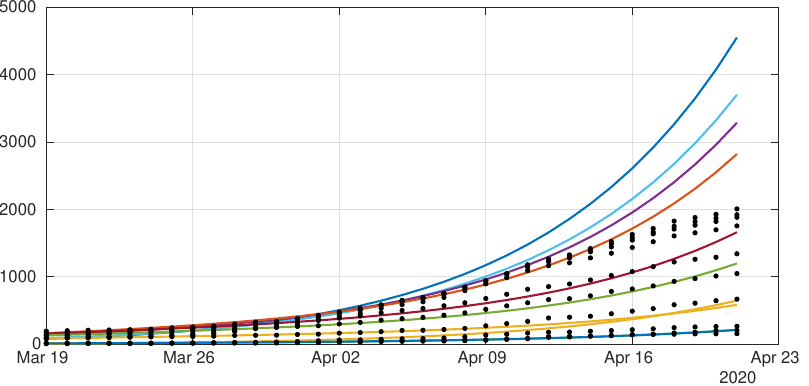}
		\caption{\textit{We plot a comparison between the model (without public intervention) and the age structured data from Japan (black dots). }}
		\label{Fig27}
	\end{figure}
	

	In the Figure below, we present an example of application of our method to fit the Japanese data. We use the period going from 20 March to 15 April.

	\begin{figure}[H]
		\centering
		\includegraphics[scale=0.8]{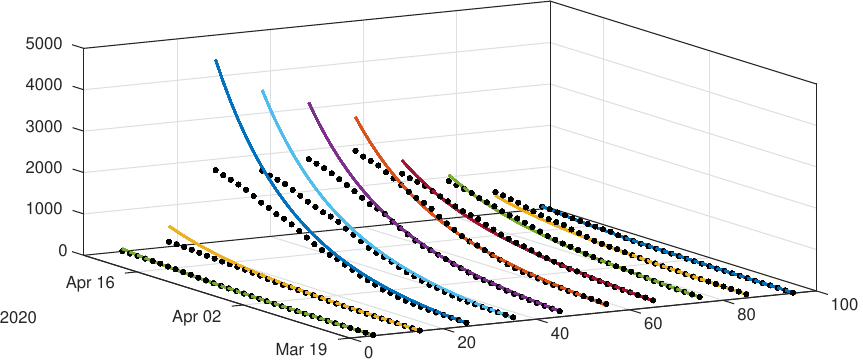}
		\caption{\textit{We plot a comparison between the model (without public intervention) and the age structured data from Japan  (black dots). }}
		\label{Fig28}
	\end{figure}

	\section{A survey for COVID-19 mathematical modeling}
	\label{Section10}

	During the COVID-19 pandemic, scientific workforces in different fields published COVID-19-related papers. The number of articles published increased considerably during this period. For example, on August 23, 2023, the WHO COVID-19 Research Database \cite{WHO1}  contains 724288 full texts of articles concerning the COVID-19 outbreak. Consequently, providing an extensive review on the subject is hopeless. Here, we make some arbitrary choices that can always be discussed. Our main goal is to give extra references on the topics mentioned earlier and highlight topics not considered in the previous sections. Several articles have attempted to do systematic reviews on COVID-19. We refer to \cite{Ioannidis21a, Rodriguez-Morales} for more results and a broader overview of the subject.

	\medskip 	
	The idea of this survey was mostly to collect references from the Infectious Disease Outbreak  webinar, which took place from 2020 to 2022 \cite{IDO}.

	\subsection{Medical survey}

	Mathematical models alone do not provide reliable information. In Figure \ref{Fig13}, we show the divergence of the mathematical model from the data. It is therefore fundamental to bring medical results into the models. 
	
	\medskip 
	It is therefore fundamental to integrate medical facts into mathematical models. We have tried throughout this text to explain how to make maximum use of the data either as input (test data) or as output (reported case data). But the dynamics of infection can be understood much better by examining concrete case studies in hospitals. For example, modeling the dynamics of infectious clusters is crucial in preventing the spread of disease. We refer to \cite{azevedo2023food, tille2023perspective, lu2023diseases, chen2022impact, Med1, Med2, Med3, Med4, Med5, Med6, Med7, Med8, Med9, Med10, Med11} for more results and references. 
	
	\medskip 
	The early development of an epidemic are very important, and an interesting retrospect of the first weeks of COVID-19  in China was presented by Zhao  in  \cite{zhao2022small}.

	\subsection{Incubation, Infectiousness, and Recovery Period}
	
	The infectious dynamic has three phases: 
	\begin{itemize}
		\item[\rm (i)]  The emission of the infectious agent, which depends on its concentration during its expulsion (remotely by air transportation or directly by secretion contact) from the contagious person; 
		\item[\rm (ii)] Transmission of the infectious agent (through an intermediate fluid or on a contact surface);

		\item[\rm (iii)] The reception of the infectious agent by a future host who becomes infected and whose symptomatology and secondary emission capacities will depend on the infectious agent's pathogenic nature and the host's immune defenses. 
		
	\end{itemize}
	These defenses are set up in two successive stages, corresponding to innate immunity, then to acquired immunity. It is, therefore, conceivable that the transmission capacity of an infectious person depends on the individual infection age. That is, the time since this person was infected.  We refer to \cite{jones2021estimating, he2020temporal, linton2020incubation, wu2022incubation, quesada2021incubation, rai2021incubation, nishiura2020serial, zuo2020airborne} for more results on the subject. In \cite{demongeot2023kermack}, we  proposed a method to understand the average individual dynamic of infection by clusters data. When considering epidemic exponential phase data, a time series approach is proposed in \cite{Demongeot22}. We refer to \cite{alvarez2021computing} for more results on the subject.

	\subsection{Data}
	An essential aspect of epidemics outbreaks is understanding the biases in the data. That is the different causes, such as unreported case data, tests, false positive PRC tests, and other factors that may bias our understanding of the data. Clusters of infected also provide another kind of data that may give another angle to examine the same problem. We should also mention the data provided by the wasted water that offers a helpful complement to the existing reported case data.  
	\subsubsection{Contact tracing}
	Contact tracing has been the main tool of public health authorities, for example, in South Korea when the COVID-19 pandemic started. In France, a dedicated digital tool called Stop-Covid has been developed. In \cite{rowe2020contact}, authors estimate that this digital approach was adopted not because digital solutions (to contact tracing) are superior to traditional ones but by default due to alienation and lack of interdisciplinary cooperation, which could be due to the fact that contact tracing is balancing personal privacy and public health, causing significant biases in classical inquiries with questionnaires \cite{kapa2020contact}. We refer to 	\cite{kretzschmar2020impact, browne2022differential, hart2021high,  giovanetti2021sars, zanella2021social, alvarez2021computing, rowe2020contact, bode2020contact, martinez2020digital, blasimme2021digital, jian2020contact, thayyil2020covid, cho2020contact, kapa2020contact} for more results on the subject.

	\subsubsection{Testing data}
	A mathematical model to understand the bias in PCR tests was proposed first by \cite{Peccoud-Jacob96, Peccoud-Jacob98}.  Diagnostic tests, particularly the PCR test, have been of considerable importance in most countries' follow-up of new cases. We refer \cite{Hellewell21, Kretzschmar22, Kucirka20, boger2021systematic, Salvatore23, arevalo2020false, pu2022screening}.  Mathematical models, including testing data as an input of the model, were proposed by   \cite{griette2021clarifying, bugalia2023assessing}.

	\subsubsection{Unreported and uncertainty in the number of reported case data}
	The origin of unreported cases of COVID-19 is multiple. It may be due to
	\begin{itemize}
		\item[\rm (i)]  a poor organization of the reporting system by the medical profession or recording by the administrative staff (especially at weekends); 
		\item[\rm (ii)] The presence of asymptomatic cases; 
		\item[\rm (iii)]  The non-consultation and/or the non-taking of medication in the symptomatic case, for reasons related to the patient or his entourage (presence of an intercurrent pathology or an existing chronic disease masking the symptoms, reasons financial, religious, philosophical, social, etc.).
	\end{itemize}
	We refer to \cite{aronna2022estimate, chow2020global, hortaccsu2021estimating, reis2020characterization, zhao2020estimating, Liu20a} for more results on the subject.

	\subsubsection{Clusters}
	The detection and monitoring of clusters are difficult to achieve and the discovery of patient zero, in a given geographical area, is always a delicate challenge. Nevertheless, there are a number of studies regarding this problem. We refer to \cite{oladipo2022performance, ito2020social, andrade2020covid, nazia2022methods, harris2022geospatial, gomes2020risk, adam2020clustering, chan2020familial} for more results on the subject. 
	
	%
	%
	%
	
	\subsubsection{More phenomenological model to fit the data}
	
	Since Daniel Bernoulli's classic primordial model \cite{Bernoulli-Chapelle, Bernoulli1766, Blower04, Dietz-Heesterbeek}, a number of phenomenological models have emerged, such as that of Richards that Ma cited \cite{ma2020estimating} just before the beginning of Covid-19 outbreak. The COVID-19 pandemic was an opportunity to recall this princeps work and to propose new approaches along the same lines, namely minimal modeling integrating the basic mechanisms of infectious transmission. We refer to \cite{tat2020epidemic, ma2020estimating, miyama2022phenomenological, attanayake2020phenomenological, zuhairoh2022data, calatayud2023phenomenological, smith2021performance, griette2021robust, richards1959flexible} for more results and references on the subject.

	\subsubsection{Wasted water data}
	The French national Obepine project has shown the value of monitoring the COVID-19 pandemic in wastewater, where the concentration of viral RNA fragments can serve as an early indicator of the onset of new waves of cases. An Italian study (Gragnani et al.) has even suggested that SARS-Cov-2 RNA was present in wastewater from Milan, Turin (December 18, 2019) and Bologna (January 29, 2020) long before the first Italian case was described (February 20 2020). We refer to \cite{sari2021infectious, elsaid2021effects, ai2021wastewater, bertrand2021epidemiological, gragnani2021sars, wurtzer2022sars, wurtzer2021several, wurtzer2020evaluation} for more results on the subject. 
	
	\subsubsection{Discrete and random modeling}
	Some modeling approaches are discrete and play with daily data. The equations of the contagion dynamics can be of two types:
	\begin{itemize}
		\item[\rm (i)]  They can be difference equations modeled on the differential equations of the continuous SIR model; 
		\item[\rm (ii)] or they can be stochastic in nature, with generally additive Gaussian noise in the second member. 
	\end{itemize}
	They generally lend themselves well to the statistical estimation of their parameters from the data. We refer to \cite{zhou2022modeling, xue2020data, forien2020estimating, bacallado2020generation} for more results and references on the subject.

	\subsubsection{Time series and wavelet approaches}
	
	If we consider the data recorded on the size of the different sub-populations involved in the contagion process (susceptible, infected, cured, immune, etc.), a possible approach is that of the signal theory, 
	with its classical methods data processing (time series, Fourier transformation, wavelet transformation, etc.). 
	This approach is generally an excellent introduction to the implementation of prediction methods. We refer to \cite{tat2020epidemic, demongeot2021application, soubeyrand2020towards, oshinubi2022functional, benhamou2022phenotypic, Demongeot22} for more results and references on the subject.

	\subsubsection{Transmission estimation and spatial modeling}
	
	Estimating the transmission parameter and studying its spatio-temporal variations is fundamental because it conditions the epidemic waves' location, shape, and duration. The spatial heterogeneity of this parameter, often due to geo-climatic (such as temperature) and/or demographic (such as susceptible population density), are crucial factors in the existence of natural barriers to the spread of a pandemic.  We refer to \cite{zhao2020serial, gaudart2021factors, mizumoto2020transmission} for more results and references on the subject.

	\subsubsection{Forecasting methods}
	
	The prediction of epidemics is one of the major objectives of modeling. It can be carried out by the continuation, in time and space, of the solutions of the spatio-temporal equations of the chosen model or the extrapolation of a statistical description of the evolution of the observed variables. We refer to \cite{morel2023learning,   bakhta2020epidemiological,roosa2020real, Liu21} for more results and references on the subject.

	\subsection{SIR like models}
	Since 2020, many articles have appeared on using the SIR model in modeling the COVID-19 outbreak. These models progressively complexified to become SIAURDV models, incorporating explicitly as ODE variables the numbers of asymptomatic (A), non-reported (U), vaccinated (V), and deceased (D) patients.  We refer to \cite{parolini2021suihter, zhao2022prediction, saiprasad2022analysis, thomas2020primer, alamo2021data, lin2020conceptual} for more results and references on the subject.

	\subsubsection{Multigroups or multiscale models}
	
	The notion of multi-group and multi-scale appeared when the COVID-19 outbreak appeared, with specific dynamics in several geographical regions of different scales and, in one area, in several distinct groups (demographic, ethnic, economic, religious, social, etc.). We refer to \cite{zanella2021data,  prague2020multi, meng2021generalized, volpert2021epidemic, griffith2021interrogating, melo2023final, reingruber2023data} for more results and references on the subject.

	\subsubsection{Model with unreported  or asymptomatic compartment}
	
	Modeling the mechanisms of non-reporting of new cases or deaths due to an epidemic makes it possible to compensate for the bias coming from a partial observation of the infected, due to the existence of asymptomatic cases or a deficient administrative registration mechanism.   We refer to \cite{olumoyin2021data, zhang2022usage, anggriani2022mathematical,  anguelov2020big, batistela2021sirsi, chen2021ratio, aguiar2022role} for more results and references on the subject.

	\subsection{Connecting reported case data with SIR like model}
	
	Very few studies considered that problem in the literature, while again, it is interesting to understand the bias induced by such a mechanism. For example, it would make sense to consider a model including a delay in reporting the data
	$$
	\CR'(t)= f \nu \int_0^\tau \gamma(s )I(t-s)ds 
	$$
	where $s\mapsto \gamma(s)$ is a non negative map. The quantity $\gamma(s)$ is the probability of reporting $s$ units of time after the individual leaves the compartment $I$.  
	This corresponds to patients showing symptoms.  We deduce that we must have  
	$$
	\int_0^\tau \gamma(s )ds=1.  
	$$
	Unfortunately, people have not considered this issue in the literature. The consequence of such a model for reported case data seems particularly important.  We refer to \cite{  postnikov2020estimation, biswas2020covid, mehmood2021investigating} for more results and references on the subject.

	\subsection{Re-infections, natural and hybrid immunity}
	
	The risk of reinfection with the SARS-Cov2 virus comes from two factors: 
	\begin{itemize}
		\item[\rm (i)] One is due to the infectious agent and its mutagenic genius, modifying its contagiousness and pathogenicity; 
		\item[\rm (ii)]  The other is due to the host, whose natural, innate, and acquired defenses by the adaptive immune system or artificially by vaccination prevent or stop the infection. 
	\end{itemize}
	The modeling of these two facets of the reinfection process makes it possible to understand the mechanisms of eradication or, on the contrary, the continuation of a pandemic, thanks to or despite collective public health measures. We refer to \cite{ smith2022time, jenner2021covid, nordstrom2022risk, huang2022comparing, pilz2022sars, stein2023past} for more results and references on the subject.

	\subsection{Mortality} 
	Mortality may appear as more robust data to be connected with epidemic models. The bias for report cases data will also exist for the number of reported dead patients. Again, the model to connect the data and the epidemic model might be more complex than a fraction of the recovered.  Nevertheless, there is evidence of an increased risk of death in the event of co-infection. The mortality risk increases dramatically when a patient is infected with another severe disease. This question of co-infection with severe diseases with COVID-19 was studied in \cite{Ruan20}. 
	We refer to \cite{soubeyrand2020covid, kobayashi2020communicating, jung2020real, ioannidis2020global, ioannidis2021infection, ioannidis2021reconciling, ioannidis2023really, mehraeen2020predictors} for more results and references on the subject. 
	
	%
	%
	%
	%
	%
	%
	%
	%
	%
	%
	%
	%
	%
	%
	%

	\subsection{Vaccination and mitigation measures}
	
	Vaccination and exclusion by temporary confinement or physical barriers (masks, anti-viral protection, or anti-transmission intermediates) are the public health measures intended to mitigate or stop an epidemic. The modeling of their gradual introduction and their effects on the spread of the epidemic makes it possible to understand their effectiveness or, on the contrary, their uselessness and, therefore, to adapt the coercive measures best, whether collective or individual \cite{ demongeot2022modeling, jarumaneeroj2022epidemiology, marinov2022adaptive, ioannidis2021covid, ioannidis2022estimating, he2022evaluation, hale2021government, acuna2020modeling, wu2020quantifying, rozhnova2021model, teslya2020impact,liu2022optimizing,coccia2022optimal, halim2021covid}.

	\subsection{Chronological age}
	
	The problem of age structure  is crucial in epidemic modeling for three reasons:  
	\begin{itemize}
		\item[{\rm (i)}] The immune system efficacy depends on age. Therefore, its adaptive component is less and less able to resist a new pathogenic agent or react to a vaccine;
		
		\item[{\rm (ii)}]  Age groups communicate differently with each other, with the most mobile (working age group) having the greatest chance of transmission and the most dependent (elderlies) on the care by younger caregivers having the greatest chance of being infected; 
		
		\item[{\rm (iii)}]  The prevalence of chronic diseases favoring infections is very unevenly distributed, the age groups at both ends of life being the most susceptible: the young due to the immaturity of the immune system and school promiscuity, and the elderly due to the existence of chronic comorbidities (diabetes, respiratory pathologies, cardiovascular diseases, and immune depression). 
	\end{itemize}
	
	These disparities make it necessary to take age into account (through at least three major classes, young people under 20, adults from 20 to 65, and seniors over 65), preventive measures (education, vaccination, isolation) being taken according to this age stratification, crossed with the risk factors linked to the occurrence of chronic pathologies.

	Few papers combined epidemic model with  age-structure and age structured data  \cite{Kretzschmar22, Age2, Age3, Age4, Age5, Age6, Age7, Age8, kuo2020covid, polidori2021covid, cao2022accelerated}. The problem of understanding the relationships between data and models is far from well understood. In Section \ref{Section9}, based on \cite{Griette20}, we proposed an approach to understanding how to connect the model and the data during the exponential phase. But such a problem needs further investigation.

	\subsection{Basic reproduction number}
	The basic reproduction number $R_0$ is an essential parameter for predicting the occurrence of an epidemic wave. It can vary over time and depends on two main factors: 
	\begin{itemize}
		\item[\rm (i)]   In the infectious subject, the successive establishment of natural defense mechanisms (innate and adaptive) explains the variations in daily $R_0$ during his period of contagiousness; 
		\item[\rm (ii)]   In subjects who are not yet infected, their susceptibility is also dependent on their immune status, but also on the collective public health measures taken at the population level. 
	\end{itemize}

	Methods for estimating daily $R_0$ are therefore fundamental to understanding the temporal and spatial evolution of a pandemic \cite{alimohamadi2020estimate, he2020estimation, billah2020reproductive, zhao2020preliminary, white2021statistical, demongeot2021estimation}.

	\subsection{Prediction of COVID-19 evolution}
	
	The difficulty of predicting the evolution of a pandemic is due to the adaptive capacities of the infectious agent and the infected and transmitting host. On the one hand, the genetic mutations of the infectious agent  and its contagious power and pathogenic dangerousness develop a highly infectious and low pathogenic variant, often signaling the natural end of a pandemic. On the other hand, the permanent adaptation strategy of individual and collective host defense measures makes it possible to anticipate the effects of changes in the agent's infectious strategy. In both cases, modeling the dynamics of mutation and prevention is essential to predict and act in near real-time on the evolution of a pandemic \cite{otto2021origins,  yang2022covid, roda2020difficult, miller2022forecasting, hussein2022short, hatami2022simulating, rashed2022covid, du2023incorporating, ioannidis2022forecasting}.

	\newpage 
	
	\vspace{1cm}

	\appendix
	
	\begin{center}
		{\LARGE	\textbf{Appendix}}
	\end{center}
	\section{When the output is a single exponential function}
	\label{AppendixA}

	Let $X \in \R^n$. We recall that 
	\begin{itemize}
		\item $X\geq 0$ if for each $i \in \left\{1, \ldots,  n\right\}$ such that  $X_i  \geq 0$;
		\item $X> 0$ if $X\geq 0$ and there exists $i \in \left\{1, \ldots,  n\right\}$ such that  $X_i  > 0$;
		\item  $X\gg 0$ if $X_i >0$ for each $i \in \left\{1, \ldots,  n\right\}$. 
	\end{itemize} 
	Let $A=(a_{ij})\in M_n \left(\R \right)$ a $n \times n$ matrix with non-negative  off-diagonal elements, and assume that  $A+ \delta I$ is non-negative irreducible whenever $\delta >0$ is large enough. 
	The projector associated to the Perron-Frobenius dominant eigenvalue is defined by  
	\begin{equation} \label{A1} 
		\Pi \, x= \dfrac{\left\langle V_L(A), x\right\rangle V_R(A)}{\left\langle V_L(A),V_R(A)\right\rangle }, \forall x \in \R^n, 	 
	\end{equation}
	where $V_R(A)\gg 0$ (respectively $V_L(A)\gg 0$) is a right eigenvector (resp. left eigenvector ) of $A$ associated with the dominant eigenvalue 
	$$
	s(A)=\max \left\{ \Re \lambda : \lambda \in \sigma(A)\right\},
	$$ 
	where $\sigma(A)$ is the spectrum of $A$ (i.e. the set of all eigenvalues of $A$). Then we have 
	$$
	A \, \Pi= \Pi \, A=s(A) \, \Pi. 
	$$
	Recall that the euclidean inner product is defined by 
	$$
	\langle	X, Y\rangle= \sum_{i=1}^{n} X_i Y_i. 
	$$

	The network associated with a non-negative matrix $A$ corresponds to all the oriented paths from the node $i$ to the node $j$ whenever $a_{ij}>0$. 
	
	\medskip 
	A non-negative matrix $A$ is \textit{irreducible} if the network associated with $A$ is strongly connected. That is, if we can join any two nodes $i$ and $j$ by using a succession of oriented paths.  
	
	
	\medskip 
	To understand irreducible matrices in epidemics, one may consider the contact matrix in epidemic models. Then,  the contact matrix is irreducible if any infected sub-group has a non-zero probability of infecting any other group (by transmitting the pathogen to intermediate sub-groups if needed). 
	
	\begin{theorem} \label{THA1} Let $A=(a_{ij})\in M_n \left(\R \right)$, and assume that the off-diagonal elements of $A $ are non-negative, and  $A+ \delta I$ is non-negative irreducible whenever $\delta >0$ is large enough.   We assume that  there exists a vector $X_0 >0$ such that 
		\begin{equation} \label{A2}
			X'(t)=AX(t), \forall t \in \left[0, \tau \right], \text{ with } X(0)=X_0,	
		\end{equation}
		and there exists a vector $Y >0$ satisfying 
		\begin{equation} \label{A3}
			\sum_{i=1}^{n} Y_{i} X_i(t)=\chi_1 e^{\chi_2 t },\forall t \in \left[0, \tau \right],
		\end{equation}
		with $\chi_1>0$, $\chi_2>0$, and $\tau>0$. 
		
		\medskip 
		\noindent Then we have 
		\begin{equation*} 
			\chi_2= s(A), \text{ and } \chi_1 =\langle	Y, \Pi X_0 \rangle. 
		\end{equation*} 
		That is,
		\begin{equation*} 
			\sum_{i=1}^{n} Y_{i} X_i(t)=	\langle	Y, \Pi X_0 \rangle  e^{s(A) t },\forall t \geq 0.
		\end{equation*}
		In other words, we can not distinguish the growth induced by $	\langle	Y,  X_0 \rangle $ and $	\langle	Y, \Pi X_0 \rangle $. Therefore we can replace $X_0$ with $\Pi X_0$, and the output $	\langle	Y,  X(t) \rangle $ will be the same. 
		
	\end{theorem}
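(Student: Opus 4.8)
\textit{Proof proposal.} The plan is to reduce the statement to the spectral decomposition of $e^{tA}$ combined with the Perron--Frobenius structure of $A$, and then to compare growth rates as $t\to+\infty$. First I would upgrade the identity \eqref{A3} from the finite interval $[0,\tau]$ to the whole half-line. The map $t\mapsto \sum_{i=1}^n Y_iX_i(t)=\langle Y,e^{tA}X_0\rangle$ is an exponential polynomial, i.e. a finite sum $\sum_k r_k(t)e^{\lambda_k t}$ over the eigenvalues $\lambda_k$ of $A$ with polynomial coefficients $r_k$ coming from the Jordan blocks; hence it is an entire function of $t$, as is $\chi_1 e^{\chi_2 t}$. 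Two entire functions that agree on a set with an accumulation point agree everywhere, so $\langle Y,e^{tA}X_0\rangle=\chi_1 e^{\chi_2 t}$ for all $t\geq 0$.

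Next I would split $X_0=\Pi X_0+(I-\Pi)X_0$ and invoke Perron--Frobenius for $A+\delta I$ (non-negative and irreducible for $\delta$ large). This gives that $s(A)$ is a \emph{simple} eigenvalue of $A$ and, moreover, the unique eigenvalue with $\Re\lambda=s(A)$: if $\lambda=s(A)+ib\in\sigma(A)$ then $|\lambda+\delta|\geq s(A)+\delta=\rho(A+\delta I)\geq|\lambda+\delta|$, forcing $b=0$. Therefore $A\Pi=\Pi A=s(A)\Pi$, and on the complementary invariant subspace $\mathrm{Range}(I-\Pi)$ the spectrum of $A$ consists only of eigenvalues with real part strictly below $s(A)$. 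Choosing $\gamma$ with $\max\{\Re\lambda:\lambda\in\sigma(A),\lambda\neq s(A)\}<\gamma<s(A)$, there is $C>0$ with $\|e^{tA}(I-\Pi)X_0\|\leq C e^{\gamma t}$ for $t\geq 0$, so that
$$
\langle Y,e^{tA}X_0\rangle=e^{s(A)t}\,\langle Y,\Pi X_0\rangle+g(t),\qquad |g(t)|\leq \|Y\|\,C\,e^{\gamma t}.
$$
I would then record the sign of $\langle Y,\Pi X_0\rangle$: by \eqref{A1}, $\Pi X_0=\dfrac{\langle V_L(A),X_0\rangle}{\langle V_L(A),V_R(A)\rangle}\,V_R(A)$; since $X_0>0$ and $V_L(A)\gg 0$ we get $\langle V_L(A),X_0\rangle>0$, and together with $V_R(A)\gg 0$ this yields $\Pi X_0\gg 0$, hence $\langle Y,\Pi X_0\rangle>0$ because $Y>0$.

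Combining these, $\chi_1 e^{\chi_2 t}=e^{s(A)t}\langle Y,\Pi X_0\rangle+g(t)$; dividing by $e^{s(A)t}$ and letting $t\to+\infty$ gives $\lim_{t\to\infty}\chi_1 e^{(\chi_2-s(A))t}=\langle Y,\Pi X_0\rangle\in(0,\infty)$. A finite nonzero limit is possible only if the exponent vanishes, so $\chi_2=s(A)$, and then the limit equals $\chi_1$, i.e. $\chi_1=\langle Y,\Pi X_0\rangle$. Substituting back, $g(t)=\chi_1 e^{\chi_2 t}-e^{s(A)t}\langle Y,\Pi X_0\rangle\equiv 0$, which gives $\langle Y,X(t)\rangle=\langle Y,\Pi X_0\rangle e^{s(A)t}$ for all $t\geq 0$; since the right-hand side is exactly $\langle Y,e^{tA}(\Pi X_0)\rangle$, replacing $X_0$ by $\Pi X_0$ leaves the output unchanged.

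I expect the main obstacle to be the careful justification that $s(A)$ is strictly dominant in real part together with the attendant decay bound $\|e^{tA}(I-\Pi)X_0\|\leq Ce^{\gamma t}$: this is precisely where \emph{irreducibility} (not merely non-negativity of the off-diagonal entries) is used, since without it $s(A)$ may fail to be simple or other eigenvalues may sit on the line $\Re\lambda=s(A)$, and the growth-rate comparison collapses. The analytic-continuation step and the positivity computation for $\Pi X_0$ are routine once the Perron--Frobenius facts are in place.
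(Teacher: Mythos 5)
Your proof is correct, but it follows a genuinely different route from the paper's. The paper never leaves the interval $[0,\tau]$: it differentiates the identity \eqref{A3} repeatedly at $t=0$ to obtain the discrete moment relations $\langle Y,(A+\delta I)^m X_0\rangle=\chi_1(\chi_2+\delta)^m$ for all $m\in\N$, and then invokes the power-iteration convergence $(A+\delta I)^m/r(A+\delta I)^m\to\Pi$ (valid because $A+\delta I$ is irreducible with strictly positive diagonal, hence primitive), so that the ratio $\bigl((\chi_2+\delta)/(s(A)+\delta)\bigr)^m$ must converge to the positive number $\langle Y,\Pi X_0\rangle/\chi_1$, forcing $\chi_2=s(A)$ and $\chi_1=\langle Y,\Pi X_0\rangle$. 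You instead work in continuous time: extend the identity from $[0,\tau]$ to all $t$ by real-analyticity, split $X_0=\Pi X_0+(I-\Pi)X_0$, prove the strict real-part gap below $s(A)$ together with the decay bound $\|e^{tA}(I-\Pi)X_0\|\le Ce^{\gamma t}$, and compare growth rates as $t\to+\infty$. Both arguments hinge on the same Perron--Frobenius positivity $\langle Y,\Pi X_0\rangle>0$, which is where irreducibility is truly indispensable: it gives $V_L(A)\gg 0$ and $V_R(A)\gg 0$, so that the inner product is positive even though $Y$ and $X_0$ are only assumed $>0$, not $\gg 0$ (your closing remark attributes the role of irreducibility mainly to simplicity and spectral dominance, but this positivity step is at least as essential, and you do use it correctly in the body of your argument). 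What each approach buys: the paper's moment/power-method argument needs no analytic continuation and no explicit spectral-gap estimate, only the convergence of normalized matrix powers; yours makes the spectral structure more transparent and yields the asymptotic statement directly, at the cost of the continuation step and the semigroup decay bound.
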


	\begin{proof}
		The equation \eqref{A3} is equivalent to 
		\begin{equation*}	
			\langle	Y, e^{A t}X_0 \rangle =\chi_1 e^{ \chi_2 t },\forall t \in \left[0, \tau \right],
		\end{equation*}
		For each $\delta >0$ large enough such that $A+ \delta I$ is non-negative and primitive, we have 
		\begin{equation*}	
			\langle	Y, e^{\left(A+ \delta I \right)t}X_0 \rangle =\chi_1 e^{ \left(\chi_2 +\delta \right)t },\forall t \in \left[0, \tau \right],
		\end{equation*}
		so by computing the derivatives on both sides of  the above equation and taking $t=0$, we obtain 
		\begin{equation*} 
			\langle	Y, \left(A+ \delta I \right)^{m}X_0 \rangle =\chi_1 \left(\chi_2 +\delta \right)^{m}, \forall m \in \N. 
		\end{equation*}
		But we have $r\left(A+ \delta I \right)=s(A)+\delta$, and 
		\begin{equation*} 
			\langle	Y, \dfrac{\left(A+ \delta I \right)^{m}}{r\left(A+ \delta I \right)^{m}} X_0 \rangle =\chi_1  \dfrac{ \left(\chi_2 +\delta \right)^{m}}{ \left( s(A)+\delta\right)^{m}}, \forall m \in \N, 
		\end{equation*}
		and since the right-hand side of the above equality converges to $	\langle	Y, \Pi X_0 \rangle >0$ (where $\Pi \gg 0$ is the projector defined in \eqref{9.1}), we deduce that 
		$$
		\lim_{m \to \infty}  \dfrac{ \left(\chi_2 +\delta \right)^{m}}{ \left( s(A)+\delta\right)^{m}}= \dfrac{	\langle	Y, \Pi X_0 \rangle }{\chi_1}>0,
		$$
		and the result follows.  
	\end{proof}

	\bibliographystyle{siam}
	
	\bibliography{biblio_review_COVID_Augsut_28_2023.bib}

	%
	
	
\end{document}